\newcommand{\ie}{{\itshape ie} }
\theoremstyle{theorem}
\theoremstyle{remark}
\newtheorem*{*setting}{Setting}
\theoremstyle{remark}
\newcommand{\STwo}{$\textnormal{S}_2$}
\newcommand{\SOne}{$\textnormal{S}_1$}
\newcommand{\GOne}{$\textnormal{G}_1$}
\newcommand{\Sd}{$\textnormal{S}_d$}
\renewcommand{\O}{\mathcal{O}}
\renewcommand{\ie}{{\itshape i.e.}}
\renewcommand{\cf}{{\itshape cf.\xspace\xspace}}
\DeclareMathOperator{\image}{image}
\DeclareMathOperator{\ind}{ind}
\DeclareMathOperator{\Diff}{Diff}
\DeclareMathOperator{\WSh}{{\bf WSh}}%^{\subseteq \sK}}
\DeclareMathOperator{\QWSh}{{\bf WSh}_{\bQ}}
\DeclareMathOperator{\ZPWSh}{{\bf WSh}_{\bZ_{(p)}}}
\newcommand{\BW}{W-divisor}
\DeclareMathOperator{\AlBDiv}{{\bf{WDiv}}}
\DeclareMathOperator{\QAlBDiv}{{\bf{WDiv}}_{\bQ}}
\DeclareMathOperator{\RAlBDiv}{{\bf{WDiv}}_{\bR}}
\renewcommand{\phi}{\varphi}
\renewcommand{\theta}{\vartheta}
\renewcommand{\epsilon}{\varepsilon}
\renewcommand{\to}[1][]{\xrightarrow{\ #1\ }}
\begin{document}

\title {Depth of $F$-singularities and base change of relative canonical sheaves}

\author{Zsolt Patakfalvi AND Karl Schwede }
\address{Department of Mathematics\\Princeton University\\Princeton, NJ, 08542, USA}
\email{pzs@princeton.edu}
\address{Department of Mathematics\\ The Pennsylvania State University\\ University Park, PA, 16802, USA}
\email{schwede@math.psu.edu}

\thanks{The second author was partially supported by the NSF grant DMS \#1064485}

\begin{abstract}
For a characteristic $p > 0$ variety $X$ with controlled $F$-singularities, we state conditions which imply that a divisorial sheaf is Cohen-Macaulay or at least has depth $\geq 3$ at certain points.  This mirrors results of Koll\'ar for varieties in characteristic zero.   As an application, we show that relative canonical sheaves are compatible with arbitrary base change for certain families with sharply $F$-pure fibers.
\end{abstract}

\subjclass[2010]{13A35, 14J10, 14J17, 14F18, 13C14, 13C15}
\keywords{depth, Cohen-Macaulay, $F$-singularities, base change, relative canonical sheaf}

\maketitle

\section{Introduction}
\label{sec:introduction}

In the paper \cite{KollarALocalKawamataViehweg}, Koll\'ar proved that sheaves $\O_X(-D)$ satisfy strong depth conditions if $D$ is locally $\bQ$-linearly equivalent to a divisor $\Delta$ such that $(X, \Delta)$ is SLC or KLT.  These results generalized \cite[Lemma 3.2]{AlexeevLimitsOfStablePairs}, \cite{FujinoIntroductionToTheLMMP} and \cite[Corollary 5.25]{KollarMori}.  Because depth conditions can be interpreted as vanishing of local cohomology, these results were described as a local version of the Kawamata-Viehweg vanishing theorem.

In this paper, we obtain characteristic $p > 0$ analogs of the main results of \cite{KollarALocalKawamataViehweg}.
This is particularly interesting because the (global) Kawamata-Viehweg vanishing theorem is false in positive characteristic \cite{raynaud_contre-exemple_1978}.
We replace the KLT and SLC conditions by strongly $F$-regular and sharply $F$-pure singularities respectively (such characteristic $p > 0$ singularity classes are known as $F$-singularities).
For the convenience of the reader, we recall that by \cite{HaraWatanabeFRegFPure,MillerSchwedeSLCvFP}
\begin{itemize}
\item{}  KLT pairs correspond philosophically to strongly $F$-regular pairs, and
\item{}  SLC pairs correspond philosophically to sharply $F$-pure pairs.
\end{itemize}
Similar to \cite{KollarALocalKawamataViehweg}, we can apply our results on depth to prove base change for relative canonical sheaves.
\vskip 6pt
\noindent
{\bf (A special case of) \autoref{cor:relative_canonical_sheaf_base_change}. }{\it
{\rm (\cf \cite[4.3]{KollarALocalKawamataViehweg})}
Let $f : X \to Y $ be a flat morphism of finite type with \STwo, \GOne{} equidimensional fibers to a smooth variety and let $\Delta \geq 0$ be a $\mathbb{Q}$-divisor on $X$ avoiding all the codimension zero and the singular codimension one points of the fibers.  Further suppose that $K_X + \Delta$ is $\bQ$-Cartier, $p \notdivide \ind(K_X + \Delta)$ and $(X_y, \Delta_y)$ is sharply $F$-pure for every $y \in Y$.    Then $\omega_{X/Y}$ is flat over $Y$ and compatible with arbitrary base change.
}
\vskip 6pt
\autoref{cor:relative_canonical_sheaf_base_change} is hoped to be useful in constructing a  moduli space for varieties of general type in positive characteristics.  See \cite{PatakfalviBaseChange} for further explanation, and also for examples for which the above compatibility does not hold. We also remark here that $\omega_{X/Y}$ behaves surprisingly well with respect to base-change. It obeys base-change for example when the fibers are Cohen-Macaualay \cite[Theorem 3.6.1]{ConradGDualityAndBaseChange}. In particular, this pertains to families of normal surfaces. In contrast, the higher reflexive powers, $\omega_{X/Y}^{[m]}$ for $m >1$, are not compatible with base change in the surface case \cite[Section 14.A]{HaconKovacsClassificationOfHigherDimensional}. Similar differences between canonical and pluricanonical sheaves have been observed earlier \cite[page 2]{KollarSubadditivityOfKodairaDimension}, \cite[Remark 4.4]{KollarProjectivityOfCompleteModuli}.

The technical result on depth used to prove \autoref{cor:relative_canonical_sheaf_base_change} is as follows.  It is a characteristic $p > 0$ version of \cite[Theorem 3(1)]{KollarALocalKawamataViehweg}, also compare with \cite[Lemma 3.2]{AlexeevLimitsOfStablePairs}, \cite[Theorem 4.21]{FujinoIntroductionToTheLMMP}, \cite[Theorem 1.5]{AlexeevHaconNonRationalCenters} and \cite[Theorem 1.2, 1.5]{KovacsIrrationalCenters}.
\vskip 6pt
\noindent
{\bf \autoref{thm.KollarTheorem3(1)}. }{\it
 {\rm(\cf \cite[Theorem 3(1)]{KollarALocalKawamataViehweg})}
Suppose that $R$ is local, \STwo{} and \GOne{} and that $0 \leq \Delta$ is an $\bR$-divisor on $X = \Spec R$ with no common components with the singular locus of $X$ and such that $(X, \Delta)$ is sharply $F$-pure.  Set $x \in X$ to be the closed point and assume that $x$ is not an $F$-pure center of $(X, \Delta)$.    Suppose that $0 \leq \Delta' \leq \Delta$ is another $\bR$-divisor and that $r\Delta'$ is integral for some $r > 0$ relatively prime to $p$.  Further assume that $M$ is any rank-1 reflexive subsheaf of $K(X)$ such that $M^{(-r)} \cong \O_X( r\Delta')$ (here $\blank^{(\cdot)}$ denotes reflexive power).  Then
\[
\depth_x M \geq \min\{3, \codim_X x\}.
\]
}
\vskip 6pt
Another interesting depth statement, again completely analogous to a theorem of Koll\'ar is below.  In the introduction we phrase it in the language of Frobenius splittings \cite{BrionKumarFrobeniusSplitting}, but in the text it is phrased slightly more generally.
\vskip 6pt
\noindent
{\bf \autoref{thm.KollarTheorem3(2)}, \autoref{rem.KolThmInFSplitLanguage}.} {\it {\rm(\cf \cite[Theorem 3(2)]{KollarALocalKawamataViehweg})}
Suppose that $(R, \bm)$ is an \STwo{} local ring with Frobenius splitting $\phi : F^e_* R \to R$ which is not compatibly split with $\bm$.  Additionally suppose that $Z$ is any union of compatibly $\phi$-split subvarieties of $X = \Spec R$ such that no irreducible component of $Z$ coincides with an irreducible component of $X$.  Suppose that $I_Z \subseteq R$ is the ideal defining $Z$, then
\[
\depth_{\bm} I_Z \geq \min\{3, 1+\codim_Z x\}.
\]
}
\vskip 6pt
The other main statement on depth we obtain, \autoref{thm.KollarTheorem2}, asserts that if $(X, \Delta)$ is strongly $F$-regular and $(p^e - 1)D$ is an integral divisor linearly equivalent to $(p^e - 1)\Delta$, then $\O_X(-D)$ is Cohen-Macaulay.  Compare with \cite[Theorem 2]{KollarALocalKawamataViehweg}.
\begin{remark}
One should also compare the above results on depth, as well as the related characteristic zero results, to \cite[Theorem 4.8(vi)]{AberbachEnescuStructureOfFPure} where Aberbach and Enescu showed that the depth of an $F$-pure ring $R$ is always $\geq$ than the dimension of the minimal $F$-pure center (ie, of the dimension of $R$ modulo the splitting prime, which we know is equal to the $s$-dimension of \cite{AberbachEnescuStructureOfFPure} by \cite{BlickleSchwedeTuckerFSigPairs1}).
\end{remark}
\vskip 12pt
\noindent{\it Acknowledgements:}

The authors began working on this project at the workshop \emph{ACC for minimal log discrepancies and termination of flips} held at the American Institute of Mathematics and organized by Tommaso de Fernex and Christopher Hacon.  The authors would also like to thank Florian Enescu for valuable discussions as well as thank the referee, J\'anos Koll\'ar and S\'andor Kov\'acs for many useful comments on a previous draft.

\section{Preliminaries on $F$-singularities}

\begin{notation}
Throughout this paper, all schemes are Noetherian, separated, of equal characteristic $p > 0$ and $F$-finite.\footnote{Meaning the Frobenius morphism is a finite morphism.} Note that any such scheme $X$ is automatically locally excellent by \cite{KunzOnNoetherianRingsOfCharP} and also has a dualizing complex by \cite{Gabber.tStruc}.  In particular, we are implicitly assuming all schemes are locally excellent and possess dualizing complexes.  Little will be lost to the reader if he or she considers only schemes that are essentially of finite type over a perfect field.
\end{notation}

We remind the reader of some special divisors on non-normal schemes.

\begin{definition}[Divisors on non-normal schemes]
We follow the notation of \cite[Section 16]{KollarFlipsAndAbundance}.
 For an \STwo{} reduced local ring $R$, set $X = \Spec R$.  We define a \emph{\BW{}} (or \emph{Weil divisor}) to be a formal sum of codimension one subsets of $X$ \emph{whose generic points are not singular points of $X$}.  This has the same data as divisors on the regular locus of $X$ or as rank-1 \STwo{} submodules $M$ of $K(R)$ (the total ring of fractions of $R$) such that $M_{\eta} = R_{\eta}$ as a subset of $K(R)$, for every codimension 1 singular point $\eta$ of $X$.  Later in the paper, we will need to instead work with the more general notion of Weil divisorial sheaves $\WSh(X)$, rank one reflexive subsheaves of $K(X)$ that are invertible in codimension 1.

In the non-local setting, such divisors are simply formal sums of irreducible subschemes that satisfy this definition locally.  We now set $\QAlBDiv(X) := \AlBDiv(X) \tensor_{\bZ} \bQ$ and $\RAlBDiv(X) := \AlBDiv(X) \tensor_{\bZ} \bR$.  Note we have containments:
\[
\AlBDiv(X) \subseteq \QAlBDiv(X) \subseteq \RAlBDiv(X).
\]
One can also form $\QWSh(X) := \WSh(X) \tensor_{\bZ} \bQ$ and $\ZPWSh(X) := \ZPWSh(X) \tensor_{\bZ} \bZ_{(p)}$ similarly, but the natural maps $\WSh(X) \to \ZPWSh(X) \to \QWSh(X)$ are not necessarily injective \cite[Section 16]{KollarFlipsAndAbundance}.
Given $\sum_i a_i D_i = \Delta \in \RAlBDiv(X)$, we use $\lceil \Delta \rceil$ to denote $\sum_i \lceil a_i \rceil D_i$ (such roundings are not necessarily well defined for $\ZPWSh(X)$ or $\QWSh(x)$).  Finally, given $D \in \AlBDiv(X)$, we use $\O_X(D)$ (or $R(D)$) to denote the corresponding subsheaf of $\sK(X)$ (or of $K(R)$) in the usual way.  Note that $D$ is effective if and only if $\O_X(D) \supseteq \O_X$.
\end{definition}

Now we move away from divisors.
Suppose that $R$ is a ring of characteristic $p > 0$.  Following \cite{SchwedeTestIdealsInNonQGor, BlickleTestIdealsViaAlgebras}, we say a \emph{Cartier subalgebra} $\sC$ is a graded subring of the graded ring
\[
\bigoplus_{e \geq 0} \Hom_R(F^e_* R, R) =: \sC^R
\]
where multiplication is done by Frobenius twisted composition\footnote{If $\phi \in [\sC^R]_e$ and $\psi \in [\sC^R]_d$, then $\phi \cdot \psi = \phi \circ (F^e_* \psi)$.  See the aforementioned sources for more details.} such that the zeroth graded piece $[\sC]_0 = \Hom_R(R, R) \cong R$.  We note that even though we call $\sC$ a \emph{Cartier subalgebra}, it is not an $R$-algebra because $R \cong [\sC]_0$ is not necessarily central.

\begin{example}[Cartier subalgebra associated to a divisor]
\label{ex.CartierAlgebras}
Given an \STwo{} and \GOne{} ring $R$, set $X = \Spec R$ and assume that $0 \leq \Delta \in \RAlBDiv(X)$ on $X$ (for example, if $R$ is normal, $\Delta$ is simply an $\bR$-divisor).   We can form the Cartier subalgebra $\sC^{\Delta}$ where \[
[\sC^{\Delta}]_e := \Hom_R(F^e_* R( \lceil (p^e - 1)\Delta \rceil), R) \subseteq \Hom_R(F^e_* R, R).
\]
\end{example}

\begin{example}[Cartier subalgebra generated by a map]
Suppose that $\sC^R$ is as above and $\phi \in [\sC^R]_e$ for some $e > 0$.  Then we can form the Cartier subalgebra $R\langle \phi \rangle$ generated by $R = [\sC]_0$ and $\phi$.  Explicitly, this is the direct sum $R \oplus (\phi \cdot (F^e_* R) ) \oplus (\phi^2 \cdot (F^{2e}_* R)) \oplus \cdots$.
\end{example}

Now we define sharply $F$-pure pairs and $F$-pure centers.

\begin{definition}[Sharply $F$-pure pairs]
If $\sC$ is a Cartier subalgebra on $R$, then we say that the pair $(R, \sC)$ is \emph{sharply $F$-pure} if there exists some $\phi \in [\sC]_e$ for some $e \geq 1$ such that $\phi(F^e_* R) = R$.

In particular, if $(R, \Delta)$ is a pair as in \autoref{ex.CartierAlgebras}, then we say that $(R, \Delta)$ is \emph{sharply $F$-pure} if the associated $(R, \sC^{\Delta})$ is sharply $F$-pure.

If $(R, \sC^R)$ is sharply $F$-pure, then we simply say that $R$ is \emph{$F$-pure}.
\end{definition}

\begin{definition}[Compatible ideals and $F$-pure centers]
If $(R, \sC)$ is a pair as above, then an ideal $I \subseteq R$ is called \emph{$\sC$-compatible} if $\phi(F^e_* I) \subseteq I$ for all $\phi \in [\sC]_e$ and all $e \geq 0$.  In the case that $\sC = R\langle \phi \rangle$, we will sometimes simply say that $I$ is \emph{$\phi$-compatible}.

An irreducible closed set $W = V(Q) \subseteq \Spec R = X$, for some $Q \in \Spec R$, is called an \emph{$F$-pure center} if the following two conditions hold:
\begin{itemize}
\item[(a)]  The localization $(R_Q, \sC_Q)$ is sharply $F$-pure, and
\item[(b)]  For every for $e \geq 0$ and $\phi \in [\sC]_e$, we have $\phi(F^e_* Q) \subseteq Q$ (in other words, if $Q$ is $\sC$-compatible).
\end{itemize}
Likewise we say that $W$ is an $F$-pure center of $(R, \Delta)$ if it is an $F$-pure center of $(R, \sC^{\Delta})$ where $\sC^{\Delta}$ is associated to $\Delta$ as in \autoref{ex.CartierAlgebras}.
\end{definition}

We also define  strongly $F$-regular pairs.

\begin{definition}[Strongly $F$-regular pairs]
If $R$ is a local ring, a pair $(R, \sC)$ is called \emph{strongly $F$-regular} if the only proper $\sC$-compatible ideals of $R$ are $0$ and $R$ itself.  If $R$ is not local, then we say $(R, \sC)$ is \emph{strongly $F$-regular} if every localization is.

A pair $(R, \Delta)$ is strongly $F$-regular if $(R, \sC^{\Delta})$ is strongly $F$-regular.
\end{definition}

\begin{remark}
Given a pair $(X, \Delta)$, all of the above definitions generalize to the non-affine setting by requiring them to hold at each stalk.  The notion of Cartier subalgebras is somewhat more subtle in the non-affine setting however (but we will not need such generalities).
\end{remark}

We recall some facts about compatible ideals and $F$-pure centers.
\begin{lemma}
\label{lem.FactsAboutSplittings}
Suppose that $(R, \sC)$ is a pair and $I \subseteq J \subseteq R$ are ideals.
\begin{itemize}
\item[(i)]  The set of $\sC$-compatible ideals are closed under sum and intersection.
\item[(ii)]  A prime ideal $Q$ is $\sC$-compatible if and only if $QR_Q$ is $\sC_Q$-compatible.
\item[(iii)]  If $\sC' \subseteq \sC$ are Cartier subalgebras and $I$ is $\sC$-compatible, then  $I$ is $\sC'$-compatible.
\item[(iv)]  Given $\phi \in \Hom_R(F^e_*R, R)$, we have $\phi(F^e_* J) \subseteq J$ if and only if $J$ is $R\langle \phi \rangle$-compatible.
\item[(v)]  Suppose that $\phi : F^e_* R \to R$ is surjective.  Some $Q \in \Spec R$ is $\phi$-compatible if and only if it is $\phi^n$-compatible where
\[
\phi^n := \underbrace{\phi \circ (F^e_* \phi) \circ \cdots \circ (F^{(n-1)e}_* \phi) }_{\text{$n$-times}}
\]
\item[(vi)]  If $I$ is $\phi$-compatible, then there exists a map $\phi/I : F^e_* (R/I) \to (R/I)$ such that the following diagram commutes:
\[
\xymatrix{
F^e_* R \ar[r]^{\phi} \ar[d] & R \ar[d] \\
F^e_* (R/I) \ar[r]_{\phi/I} & (R/I).
}
\]
Furthermore, $J \supseteq I$ is $\phi$-compatible if and only if $J/I$ is $\phi/I$-compatible.  (This statement can also be done with Cartier subalgebras, but we will not need it).
\item[(vii)]  $(R, \sC)$ is strongly $F$-regular if and only if for every $c \in R\setminus\{ \textnormal{minimal primes}\}$, there exists a $\phi \in [\sC]_e$ for $e > 0$, in fact one may take $e$ to be any larger multiple, such that $\phi(F^e_* c) = 1$.
\end{itemize}
\end{lemma}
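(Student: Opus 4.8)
\emph{Overview.} The plan is to dispatch parts (i), (iii), (iv), (vi) and (ii), (v) as routine consequences of the definitions, and then concentrate the real effort on (vii). For (i), I would use that $F^e_*(-)$ is the identity on underlying sets and that each $\phi$ is additive, so $\phi(F^e_*(I+J))\subseteq I+J$ and $\phi(F^e_*(I\cap J))\subseteq\phi(F^e_* I)\cap\phi(F^e_* J)\subseteq I\cap J$; (iii) is immediate since $[\sC']_e\subseteq[\sC]_e$. For (iv), note that the degree-$ne$ piece of $R\langle\phi\rangle$ is $\phi^n\cdot F^{ne}_* R$, so $J$ is $R\langle\phi\rangle$-compatible iff $\phi^n(F^{ne}_* J)\subseteq J$ for all $n$; the ``only if'' of (iv) is the case $n=1$, and the ``if'' is the induction $\phi^n(F^{ne}_* J)=\phi(F^e_*(\phi^{n-1}(F^{(n-1)e}_* J)))\subseteq\phi(F^e_* J)\subseteq J$. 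For (vi), since $\phi(F^e_* I)\subseteq I$ the composite $F^e_* R\xrightarrow{\phi}R\to R/I$ annihilates $F^e_* I=\ker(F^e_* R\to F^e_*(R/I))$, hence factors as a map $\phi/I$ through $F^e_*(R/I)$, and the last assertion of (vi) follows because $(\phi/I)(F^e_*(J/I))$ is the image of $\phi(F^e_* J)$ in $R/I$, together with (iv) and $I\subseteq J$. For (ii), I would use $F$-finiteness, so that $\Hom_R(F^e_* R,-)$ commutes with localization and $[\sC_Q]_e=[\sC]_e\otimes_R R_Q$: if $Q$ is $\sC$-compatible then each $\phi\in[\sC]_e$ localizes to $\phi_Q$ with $\phi_Q(F^e_*(QR_Q))=(\phi(F^e_* Q))R_Q\subseteq QR_Q$, and such $\phi_Q$ generate $[\sC_Q]_e$; conversely, for $\phi\in[\sC]_e$ and $x\in Q$ the image of $\phi(F^e_* x)$ in $R_Q$ lies in $QR_Q$, so $\phi(F^e_* x)\in QR_Q\cap R=Q$. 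For (v), the forward direction is again the induction of (iv) and requires no hypothesis; for the converse I would localize at $Q$ as in (ii) (using $(\phi^n)_Q=(\phi_Q)^n$, with $\phi_Q$ still surjective) to reduce to $Q=\maxm$ maximal in a local ring, where either $\phi(F^e_*\maxm)\subseteq\maxm$ (done) or $\phi(F^e_*\maxm)\nsubseteq\maxm$, forcing $\phi(F^e_*\maxm)=R$, and then $\phi^n(F^{ne}_*\maxm)=R\nsubseteq\maxm$ contradicts the $\phi^n$-compatibility of $\maxm$.

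\emph{Part (vii), the implication ``$\Leftarrow$''.} First I would apply the hypothesis to $c=1$ to produce a surjective $\phi\in[\sC]_e$, so $R$ is $F$-split and hence reduced; consequently $R_\bp$ is a field for every minimal prime $\bp$, so $\bp R_\bp=0$ is $\sC_\bp$-compatible and $\bp$ is $\sC$-compatible by (ii). Now let $I$ be a nonzero $\sC$-compatible ideal. By prime avoidance, either $I$ contains an element $c$ lying in no minimal prime — in which case the hypothesis gives $\phi\in[\sC]_e$ with $1=\phi(F^e_* c)\in\phi(F^e_* I)\subseteq I$, so $I=R$ — or $I$ is contained in some minimal prime $\bp_1$, which is then nonzero and $\sC$-compatible, so $R$ has at least two minimal primes. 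In the latter case I would pick $b\in\bp_1\setminus(\bigcup_{j\neq 1}\bp_j)$ and $d\in(\bigcap_{j\neq 1}\bp_j)\setminus\bp_1$ (both exist by minimality of the $\bp_j$ and prime avoidance), so that $c:=b+d$ lies in no minimal prime; the hypothesis then yields $1=\phi(F^e_* c)=\phi(F^e_* b)+\phi(F^e_* d)\in\bp_1+\bigcap_{j\neq 1}\bp_j$, which is absurd since that ideal is contained in the maximal ideal of $R$. Hence $I=R$ in all cases, i.e.\ $(R,\sC)$ is strongly $F$-regular.

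\emph{Part (vii), the implication ``$\Rightarrow$'', and the main obstacle.} Assume $(R,\sC)$ is strongly $F$-regular and that $c$ avoids the minimal primes; then $R$ is reduced and $c$ is a nonzerodivisor. Strong $F$-regularity implies sharp $F$-purity, so I would fix a surjective $\Phi\in[\sC]_{e_0}$ with $e_0\geq 1$, and after replacing $\Phi$ by $\Phi\cdot v$ for a suitable $v\in R$ we may assume $\Phi(F^{e_0}_* 1)=1$. Consider the ideal $\bb:=\sum_{e\geq 1}\sum_{\phi\in[\sC]_e}\phi(F^e_*(cR))$. Using the graded multiplication on $\sC$ — so that $\psi(F^d_*(\phi(F^e_*(cR))))=(\psi\cdot\phi)(F^{d+e}_*(cR))\subseteq\bb$ for $\psi\in[\sC]_d$ and $\phi\in[\sC]_e$ — this $\bb$ is $\sC$-compatible, and it is nonzero because $\Phi(F^{e_0}_*(c^{p^{e_0}}))=c\neq 0$ lies in it. Localizing at any maximal ideal and invoking strong $F$-regularity there, $\bb$ cannot be proper, so $\bb=R$, and hence $1=\sum_{i=1}^n\psi_i(F^{e_i}_* c)$ with $\psi_i\in[\sC]_{e_i}$ (after absorbing ring coefficients into the $\psi_i$). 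The remaining step — consolidating this finite sum into a single map $\phi\in[\sC]_e$ with $\phi(F^e_* c)=1$ — is the heart of the matter. The device I would use is that $\phi(F^e_* c)=1$ for $\phi\in[\sC]_e$ implies $((\phi\cdot\phi)\cdot c^{p^e})(F^{2e}_* c)=1$ with $(\phi\cdot\phi)\cdot c^{p^e}\in[\sC]_{2e}$ (a short Frobenius-twisted-composition computation), and a similar computation produces such a map in degree $ne$ for every $n\geq 1$ — this yields the ``any larger multiple'' clause. Then, writing $v_i:=\psi_i(F^{e_i}_* c)$ so that $\Spec R=\bigcup_i D(v_i)$, over $D(v_i)$ the map $v_i^{-1}\cdot\psi_i\in[\sC_{v_i}]_{e_i}$ sends $F^{e_i}_* c$ to $1$; after bumping all $e_i$ up to a common multiple $e$, the ideal $I_e$ of $R$ spanned by the elements $\phi(F^e_* c)$ for $\phi\in[\sC]_e$ satisfies $I_e R_{v_i}=R_{v_i}$ for every $i$, and since the $D(v_i)$ cover $\Spec R$ this forces $I_e=R$, so some $\phi\in[\sC]_e$ has $\phi(F^e_* c)=1$. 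I expect this consolidation step in ``$\Rightarrow$'' to be the main obstacle; in ``$\Leftarrow$'' the most delicate points are the $\sC$-compatibility of the minimal primes and the choice of the straddling element $c=b+d$.
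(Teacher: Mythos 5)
Your treatments of (i)--(vi) and of the $\Rightarrow$ direction of (vii) are essentially correct, and they reconstruct arguments that the paper simply outsources (the paper cites \cite[Proposition 4.1]{SchwedeCentersOfFPurity} for (v), \cite[Proposition 3.23]{SchwedeTestIdealsInNonQGor} for (vii), and declares the rest ``obvious''). In particular the consolidation step in $\Rightarrow$ --- producing a single $\phi\in[\sC]_e$ with $\phi(F^e_*c)=1$ from the finite sum $1=\sum\psi_i(F^{e_i}_*c)$ by pushing to a common degree with the $((\phi\cdot\phi)\cdot c^{p^e})$ device and then gluing over the cover $\Spec R=\bigcup D(v_i)$ --- is the right idea and is carried out correctly.

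There is, however, a genuine gap in your $\Leftarrow$ direction of (vii). You argue that the hypothesis forces every nonzero $\sC$-compatible ideal $I$ of $R$ to equal $R$, and declare that to be strong $F$-regularity. But that is not the definition in the non-local case: strong $F$-regularity requires every \emph{localization} $(R_Q,\sC_Q)$ to have only the trivial compatible ideals, and the stronger statement you assert for $R$ itself is simply \emph{false} when $R$ is not local. For example, if $R=R_1\times R_2$ is a product of two regular $F$-finite rings and $\sC=\sC^R$, then the hypothesis of (vii) is satisfied (componentwise, after passing to a common $e$), yet the two minimal primes $0\times R_2$ and $R_1\times 0$ are nonzero, proper, and $\sC$-compatible. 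Your derivation of a contradiction also breaks precisely at this point: you conclude that $1\in\bp_1+\bigcap_{j\neq 1}\bp_j$ is ``absurd since that ideal is contained in the maximal ideal of $R$,'' which silently assumes $R$ is local; in the product example $\bp_1+\bp_2=R$ and there is no contradiction. The correct structure is to localize first at an arbitrary $Q\in\Spec R$ and run your argument inside $(R_Q,\maxm)$, but then one must check that the hypothesis of (vii) descends to $R_Q$. This requires a small extra step: given $a/1\in R_Q$ avoiding the minimal primes of $R_Q$, the element $a\in R$ may still lie in a minimal prime $\bp'\not\subseteq Q$ of $R$, so the hypothesis does not apply to $a$ directly. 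One must first replace $a$ by $a+d$ where $d$ is chosen (by prime avoidance among the pairwise incomparable minimal primes of the reduced ring $R$) to lie in every minimal prime that $a$ avoids and in no minimal prime containing $a$; then $a+d$ lies in no minimal prime of $R$, and since $d$ lies in every minimal prime contained in $Q$ it is nilpotent, hence zero, in $R_Q$, so $(a+d)/1=a/1$ and the map produced by the hypothesis localizes to what is needed. Without this localization step your proof of $\Leftarrow$ does not establish the statement as written.
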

\begin{proof}
(v) follows from the argument of \cite[Proposition 4.1]{SchwedeCentersOfFPurity}.  (vii) can be found in this generality in \cite[Proposition 3.23]{SchwedeTestIdealsInNonQGor}.  The rest are obvious.
\end{proof}

Our next goal is to give an example of a $\phi$-compatible ideal that will be crucial in later sections.  The main idea is that Frobenius maps and Frobenius splittings induce maps on local cohomology.  Those induced maps can then be thought of as acting directly and explicitly on \Cech{} classes.  For the convenience of the reader not already familiar with this construction, recall that if $X = \Spec R$ and $U = \Spec R \setminus \{\bm\}$, then for any coherent $\O_X$-module $M$, we have $H^i_{\bm}(M) = H^{i-1}(U, M)$ for $i > 1$ and also that $H^1_{\bm}(M) = H^{0}(U, M)/\image(H^0(X, M))$. We can then use the \Cech{} cohomology description of sheaf cohomology to define $H^i_{\bm}(M)$.  For a more thorough description of local cohomology by the \Cech{} complex, see for example \cite[Section 3.5]{BrunsHerzog}.

We now consider Frobenius action on local cohomology.  The Frobenius map $R \to F_* R$ yields $\Psi : H^i_{\bm}(R) \to H^i_{\bm}(F_* R) \cong H^i_{\bm}(R)$.  Given a \Cech{} class $[z] = [\dots, z_j, \dots] \in H^i_{\bm}(R)$, we have $\Psi([z]) \in H^i_{\bm}(F_* R)$.  But certainly $\Psi([z]) = F^e_* [z]^{p^e} = F^e_* [\dots, z_j^{p^e}, \dots]$ is identified with raising the entries of $[z]$ to the $p^e$th power.

Now we do the same computation with a Frobenius splitting.  Suppose that $\phi : F^e_* R \to R$ is an $R$-linear Frobenius splitting, and so we have a map $H^i_{\bm}(F^e_* R) \xrightarrow{\phi} H^i_{\bm}(R)$ induced by $\phi$.  Certainly $\phi(F^e_* [\dots, y_j, \dots]) = [\dots, \phi(F^e_* y_j), \dots]$.  But now observe that for any $[z] \in H^i_{\bm}(R)$ and $r \in R$ we have that
\[
\phi(F^e_* (r \cdot [z]^{p^e})) = \phi(F^e_* [\dots, r z_j^{p^e}, \dots]) = [\dots, \phi(F^e_* r) z_j, \dots] = \phi(F^e_* r) [z].
\]

\begin{lemma} {\rm(\cf \cite[Theorem 4.1]{EnescuHochsterTheFrobeniusStructureOfLocalCohomology})}
\label{lem.LocalCohomologyCompatibleWithSplittings}
Suppose that $(R, \bm)$ is a local ring.  Then $\Ann_R H^i_{\bm}(R)$ is compatible with every splitting\footnote{A \emph{splitting} is simply a map $\phi : F^e_* R \to R$ that sends $F^e_* 1$ to $1$.  Splittings are necessarily surjective.} $\phi : F^e_* R \to R$ of Frobenius $R \to F^e_* R$.
\end{lemma}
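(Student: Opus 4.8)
The plan is to build directly on the explicit \Cech-cohomology computation recorded in the paragraph preceding the statement. That computation establishes the identity
\[
\phi\bigl(F^e_*(r \cdot [z]^{p^e})\bigr) \;=\; \phi(F^e_* r)\cdot [z]
\]
for every $r \in R$ and every \Cech cohomology class $[z] = [\dots, z_j, \dots] \in H^i_{\bm}(R)$, where $[z]^{p^e}$ denotes the class represented by $[\dots, z_j^{p^e}, \dots]$. Writing $J := \Ann_R H^i_{\bm}(R)$, compatibility of $J$ with the splitting $\phi : F^e_* R \to R$ means exactly that $\phi(F^e_* J) \subseteq J$ (see \autoref{lem.FactsAboutSplittings}(iv)), and since $\phi$ is $R$-linear this amounts to showing $\phi(F^e_* r) \in J$ for each $r \in J$, i.e.\ that $\phi(F^e_* r)$ annihilates every element of $H^i_{\bm}(R)$.

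First I would note that the operation sending a \Cech cochain $[\dots, z_j, \dots]$ to $[\dots, z_j^{p^e}, \dots]$ is a chain endomorphism of the \Cech complex computing $H^{\bullet}_{\bm}(R)$, because the $p^e$-th power map commutes with localizations and hence with the \Cech differentials; this is nothing but the Frobenius action $\Psi$ recalled above, under the canonical identification $H^i_{\bm}(F^e_* R) \cong H^i_{\bm}(R)$. In particular $[z]^{p^e}$ is again a well-defined class in $H^i_{\bm}(R)$. Consequently, if $r \in J = \Ann_R H^i_{\bm}(R)$, then $r\cdot [z]^{p^e} = [\dots, r z_j^{p^e}, \dots] = 0$ in $H^i_{\bm}(R)$, and substituting this into the displayed identity gives
\[
\phi(F^e_* r)\cdot [z] \;=\; \phi\bigl(F^e_*(r\cdot[z]^{p^e})\bigr) \;=\; \phi(F^e_* 0) \;=\; 0.
\]
As $[z]$ ranges over all classes of $H^i_{\bm}(R)$ (each represented by some \Cech cocycle), this says precisely that $\phi(F^e_* r) \in \Ann_R H^i_{\bm}(R) = J$, whence $\phi(F^e_* J) \subseteq J$, the desired compatibility.

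I do not anticipate a genuine obstacle: essentially all of the content is the \Cech-level identity already derived before the lemma, together with the elementary fact that $[z] \mapsto [z]^{p^e}$ preserves $H^i_{\bm}(R)$. The only points requiring care are routine bookkeeping — that scalar multiplication on local cohomology is computed entrywise on \Cech representatives, and that the induced map $\phi_* : H^i_{\bm}(F^e_* R) \to H^i_{\bm}(R)$ is well defined on cohomology (once more because $\phi$ localizes and commutes with \Cech differentials). It is worth remarking that the splitting hypothesis on $\phi$ is never actually used: the argument works verbatim for an arbitrary $R$-linear map $F^e_* R \to R$, in keeping with the generality of \cite[Theorem 4.1]{EnescuHochsterTheFrobeniusStructureOfLocalCohomology}.
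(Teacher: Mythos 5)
Your proof is correct and takes essentially the same approach as the paper: both rest on the \Cech-level identity $\phi(F^e_*(r\cdot[z]^{p^e})) = \phi(F^e_* r)\cdot[z]$ derived just before the lemma, and both conclude by setting $r\cdot[z]^{p^e}=0$. Your closing observation that the splitting hypothesis is never used — so the annihilator is compatible with every $R$-linear $\phi\colon F^e_* R \to R$, not just splittings — is accurate and worth keeping in mind, though the paper only states (and only needs) the splitting case.
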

\begin{proof}
We have the following composition
\[
\xymatrix@R=1pt{
R \ar[r] & F^e_* R \ar[r]^{\phi} & R\\
1 \ar@{|->}[r] & F^e_* 1 \ar@{|->}[r] & 1.
}
\]
%It follows that $H^i_{\bm}(F^e_* R) \xrightarrow{\phi} H^i_{\bm}(R)$ is surjective.
Now suppose that $r \in  \Ann_R H^i_{\bm}(R)$.  Then choose $[z] \in H^i_{\bm}(R)$.  We want to show that $\phi(F^e_* r).[z] = 0$.  Now, it follows from the \Cech{} cohomology description of local cohomology, and $\phi$'s action on it, that
\[
0 = \phi(F^e_* 0) = \phi(F^e_* ( r.[z]^{p^e})) = \phi(F^e_* r).[z]
\]
which completes the proof.  %Here raising a \Cech{} class to a Frobenius power just raises the coordinates (in some/any expression) to that power.

\end{proof}

%An alternate and quite useful description of strongly $F$-regular rings is as follows.

%\begin{lemma}
%\label{lem.OriginalDefnOfFRegular}
%\end{lemma}
%\begin{proof}
%This
%Suppose that $R$ satisfies the condition of the Lemma and that $J \subseteq R$ is $\sC$-compatible.  If $J \neq \{ 0 \}$, then there exists $0 \neq c \in J$.  By hypothesis, there exists $\phi \in [\sC]_e$ such that $\phi(F^e_* c) = 1$.  But $\phi(F^e_* c) \in J$, so $J = R$.

%Conversely, we assume that $R$ is strongly $F$-regular, and we fix $c \in R$.  We handle the case when $(R, \bm)$ is a local ring and leave the reduction to this case to the reader (it is not difficult).  Consider the ideal $J = \sum_{e \geq 0} \sum_{\phi \in [\sC]_e} \langle \phi(F^e_* c) \rangle$.  $J$ is obviously compatible and contains $c$ (consider $\phi = \id_R \in [\sC]_0$).  Thus $J \neq 0$.  But then $J = R$ by hypothesis.  In particular, it follows that $\phi(F^e_* c) = u \notin \bm$ is a unit.  But now $\phi(F^e_* u^{-p^e} c) = 1$ and so the map $\psi(\blank) = \phi(F^e_* u^{-p^e} \blank)$ satisfies the desired result.
%\end{proof}

We also recall the following fact.  We include the proof because the method will be generalized later.

\begin{lemma}
\label{lem.stronglyFRegularIsCohenMacaulay}
If $(R, \sC)$ is strongly $F$-regular, then $R$ is normal and Cohen-Macaulay.
\end{lemma}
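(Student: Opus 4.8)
The plan is to reduce at once to the case that $R$ is local — both normality and Cohen--Macaulayness being local conditions — and then to establish the two conclusions separately. The Cohen--Macaulay statement carries the real content and is the one whose proof is meant to be reused later; normality will follow more cheaply. Note first that \autoref{lem.FactsAboutSplittings}(vii), applied with $c = 1$, produces a Frobenius splitting $\phi_0 \in [\sC]_e$ lying inside $\sC$, so $R$ is in particular $F$-split.

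For Cohen--Macaulayness, suppose not: then there is some $i < d := \dim R$ with $H^i_{\bm}(R) \neq 0$. The key observation is that $\Ann_R H^i_{\bm}(R)$ is $\sC$-compatible. Indeed, the computation in the proof of \autoref{lem.LocalCohomologyCompatibleWithSplittings} applies verbatim to an arbitrary $\phi \in \Hom_R(F^e_* R, R)$, not merely to a splitting: its only ingredients are the $R$-linearity of $\phi$ and the fact that an element $r$ of $\Ann_R H^i_{\bm}(R)$ annihilates not only a given \Cech{} class $[z]$ but also its Frobenius twist $[z]^{p^e} = [\dots, z_j^{p^e}, \dots]$, which again lies in $H^i_{\bm}(R)$; hence $\phi(F^e_* r) \cdot [z] = \phi(F^e_*(r \cdot [z]^{p^e})) = 0$. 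Now $\Ann_R H^i_{\bm}(R)$ is a proper ideal because $H^i_{\bm}(R) \neq 0$, so by strong $F$-regularity it must be $0$; that is, $H^i_{\bm}(R)$ is a faithful $R$-module. Dualizing, local duality gives $H^i_{\bm}(R)^\vee \cong H^{-i}(\omega_R^\bullet)$, a \emph{finitely generated} $R$-module with the same annihilator, hence also faithful, so that $\Supp H^{-i}(\omega_R^\bullet) = \Spec R$ has dimension $d$. This contradicts the standard bound $\dim \Supp H^{-i}(\omega_R^\bullet) \leq i < d$ valid for the cohomology modules of a dualizing complex. Therefore $R$ is Cohen--Macaulay.

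For normality I would invoke Serre's criterion, so it suffices to see that $R$ is reduced, regular in codimension one, and \STwo{} — and \STwo{} is already a consequence of Cohen--Macaulayness. Reducedness holds because $R$ is $F$-split (an $F$-split ring has no nonzero nilpotents); in fact the minimal primes over a $\sC$-compatible ideal are again $\sC$-compatible (a standard fact), so by strong $F$-regularity each minimal prime of $R$ is $0$ or $R$, whence $(0)$ is prime and $R$ is a domain. For regularity in codimension one, localize at a prime of height $\leq 1$: the localization is again strongly $F$-regular and is a local domain of dimension $\leq 1$, hence a field or a DVR (a one-dimensional strongly $F$-regular local ring is regular — for instance the conductor of its normalization is a nonzero $\sC$-compatible ideal and so must be the unit ideal). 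Thus $R$ is normal.

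The only genuine obstacle is the Cohen--Macaulay step. Two points there require care: verifying that $\Ann_R H^i_{\bm}(R)$ really is $\sC$-compatible, which rests on the explicit \Cech{}-level description of the Frobenius action on local cohomology developed in the text just before \autoref{lem.LocalCohomologyCompatibleWithSplittings}; and passing through Matlis/local duality to the finitely generated module $H^{-i}(\omega_R^\bullet)$, which is what lets the dimension bound on a dualizing complex be brought to bear (one cannot argue directly with $H^i_{\bm}(R)$, whose support is $\{\bm\}$ but whose annihilator may be strictly smaller). The reduction to the local case, the appeal to Serre's criterion, and the one-dimensional regularity statement are all routine or standard.
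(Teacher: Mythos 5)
Your argument is correct, and for the Cohen--Macaulay step it takes a genuinely different route from the paper. The paper first produces, via local duality and the dimension bound on a normalized dualizing complex, a single nonzero element $c$ annihilating $H^i_\bm(R)$ for all $i < \dim R$; it then uses \autoref{lem.FactsAboutSplittings}(vii) to build a map $\phi\in[\sC]_e$ making $R \to F^e_*R \xrightarrow{\cdot c} F^e_*R \xrightarrow{\phi} R$ the identity, and observes that the middle arrow dies on $H^i_\bm$, forcing $H^i_\bm(R)=0$. You instead upgrade \autoref{lem.LocalCohomologyCompatibleWithSplittings} to the observation that $\Ann_R H^i_\bm(R)$ is $\sC$-compatible for \emph{any} $\phi\in\Hom_R(F^e_*R,R)$ (the \Cech{} computation indeed never uses that $\phi$ is a splitting), apply the definition of strong $F$-regularity to conclude the annihilator is zero whenever $H^i_\bm(R)\neq 0$, and then derive a contradiction from faithfulness of the finitely generated dual $H^{-i}(\omega_R^\bullet)$ against the same dimension bound. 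Both proofs lean on the dualizing-complex dimension estimate (which is exactly \autoref{lem.DimensionsOfLocalCohomology} with $M=R$); yours trades the explicit construction of a killing splitting for the more structural ``proper compatible ideal must vanish,'' which is arguably more transparent and is the pattern that generalizes to \autoref{thm.KollarTheorem3(1)} and \autoref{thm.KollarTheorem3(2)}.

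For normality your route is more circuitous than the paper's and, in the end, uses the same key input. The paper notes $R$ is reduced (a split Frobenius is injective) and then goes directly: the conductor of the normalization is uniformly compatible by the Brion--Kumar argument, hence by strong $F$-regularity it is the unit ideal, so $R$ equals its normalization. You instead establish that $R$ is a domain via the fact that minimal primes of $R$ are $\sC$-compatible, then appeal to Serre's $R_1+S_2$ criterion, and prove $R_1$ by localizing at height-one primes and again invoking the conductor being compatible. This works, but note two small points: the claim that minimal primes are uniformly compatible is not among the facts recorded in \autoref{lem.FactsAboutSplittings}, so if you want to use it you should either supply a reference or a short argument; and since you are already invoking the conductor-is-compatible fact to handle the height-one case, it is shorter to deploy it globally as the paper does and skip Serre's criterion and the domain detour altogether.
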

\begin{proof}
We first note that by \autoref{lem.FactsAboutSplittings}(vii), the strong $F$-regularity hypothesis implies that there exists a Frobenius splitting $\psi$ such that $R \to F^e_* R \xrightarrow{\psi} R$ is an isomorphism for some $e > 0$.  It then easily follows that $R$ must be reduced since if not, the map $R \to F^e_* R$ is not injective.
Normality follows since the conductor ideal is compatible with every $\phi \in \sC^R$ by the argument of \cite[Proposition 1.2.5]{BrionKumarFrobeniusSplitting}.  For the Cohen-Macaulay condition, by working locally we assume that $(R,\bm)$ is a local domain.  By local duality \cite[Chapter V, Theorem 6.2]{HartshorneResidues}, each $H^i_{\bm}(R)$ is Matlis dual to $\myH^{i-\dim R} \omega_R^{\mydot}$ for some normalized dualizing complex $\omega_R^{\mydot}$.  Since an element $c \in R$ annihilates a finitely generated $R$-module if and only if $c$ annihilates the Matlis dual of a module, it follows that there exists $0 \neq c \in R$ such that $c \cdot H^i_{\bm}(R) = 0$ for all $i < \dim R$.  \autoref{lem.FactsAboutSplittings}(vii) then implies that there exists $\phi \in [\sC]_e$ such that the composition
\[
R \to F^e_* R \xrightarrow{F^e_* (\cdot c)} F^e_* R \to R
\]
is an isomorphism.  Taking local cohomology for $i < \dim R$ gives us an isomorphism:
\[
H^i_{\bm}(R) \to H^i_{\bm}(F^e_* R) \xrightarrow{F^e_* (\cdot c)}  H^i_{\bm}(F^e_* R) \to H^i_{\bm}(R)
\]
where the middle map is the zero map.  Thus $H^i_{\bm}(R) = 0$ which completes the proof.
\end{proof}

We also state a generalization of \cite[Theorem 4.3]{SchwedeSmithLogFanoVsGloballyFRegular}, similar computations were done in \cite{MillerSchwedeSLCvFP}.

\begin{lemma}
\label{lem.S2G1ImpliesFrobeniusSplitYieldsDivisor}
Suppose that $X = \Spec R$ is \STwo{} and \GOne{} and sharply $F$-pure.  Then there exists an element $0 \leq \Delta \in \QAlBDiv(X)$ such that $(p^e - 1)(K_X + \Delta)$ is $\bQ$-Cartier and $(X, \Delta)$ is sharply $F$-pure.
\end{lemma}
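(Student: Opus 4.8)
The plan is to run the standard correspondence between effective divisors $\Delta$ with $(p^e-1)(K_X+\Delta)$ Cartier and $R$-linear maps $F^e_* R(\lceil (p^e-1)\Delta\rceil)\to R$, but in the \STwo{}/\GOne{} setting, using the divisorial conventions of the preliminary section (\cf \cite[Section 16]{KollarFlipsAndAbundance}); the non-normal form of this machinery is essentially \cite{MillerSchwedeSLCvFP}, and the statement generalizes \cite[Theorem 4.3]{SchwedeSmithLogFanoVsGloballyFRegular}. Since $R$ is $F$-pure it is reduced, so those conventions apply, and by definition there are $e\geq 1$ and a surjective $R$-linear map $\phi\colon F^e_* R\to R$. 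Because $R$ is \STwo{} and \GOne{}, its canonical module $\omega_R$ is rank one, \STwo{}, and invertible at every codimension one point, so we may fix a \BW{} $K_X$ with $\O_X(K_X)\cong\omega_R$; the divisors $mK_X$ are again \BWs{}, but a priori not $\bQ$-Cartier.

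The technical core is the isomorphism of $F^e_* R$-modules
\[
\Hom_R\bigl(F^e_* R(D),\,R\bigr)\;\cong\;F^e_* \O_X\bigl((1-p^e)K_X - D\bigr)\qquad\bigl(D\in\AlBDiv(X)\bigr),
\]
which for $D\geq 0$ is compatible with the inclusions $\Hom_R(F^e_* R(D),R)\hookrightarrow\Hom_R(F^e_* R,R)$ and $\O_X((1-p^e)K_X-D)\hookrightarrow\O_X((1-p^e)K_X)$. On the Gorenstein locus $U$, which by \GOne{} and reducedness contains every point of codimension $\leq 1$ (so $X\setminus U$ has codimension $\geq 2$), this is Grothendieck duality for the finite morphism $F^e\colon U\to U$, together with $(F^e)^!\O_U\cong\omega_U\otimes (F^e)^*\omega_U^{-1}\cong\O_U((1-p^e)K_X)$. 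Both sides of the displayed isomorphism are \STwo{} as $\O_X$-modules — the left because $\Hom$ into the \STwo{} module $R$ is \STwo{}, the right because $\O_X((1-p^e)K_X-D)$ is \STwo{} and $F^e$ is finite — so the isomorphism over $U$ extends uniquely to one over $X$.

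Now push $\phi$ through this. Taking $D=0$, the map $\phi$ corresponds to a section $s$ of $\O_X((1-p^e)K_X)$, which is nonzero because $\phi$ is surjective; set $D:=\Div(s)+(1-p^e)K_X$, an effective divisor in $\AlBDiv(X)$, and $\Delta:=\tfrac{1}{p^e-1}D\in\QAlBDiv(X)$. Then $0\leq\Delta$ and $(p^e-1)(K_X+\Delta)=(p^e-1)K_X+D=\Div(s)$ is principal, hence $\bQ$-Cartier. It remains to check that $(X,\Delta)$ is sharply $F$-pure, that is, that the Cartier subalgebra $\sC^\Delta$ of \autoref{ex.CartierAlgebras} has a surjective element in $[\sC^\Delta]_e=\Hom_R(F^e_* R(D),R)$ (note $\lceil (p^e-1)\Delta\rceil=D$). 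Since $\Div(s)+(1-p^e)K_X-D=0$, the section $s$ also lies in $\O_X((1-p^e)K_X-D)$, so under the displayed isomorphism it corresponds to a map $\bar\phi\colon F^e_* R(D)\to R$ whose restriction along $F^e_*(R\hookrightarrow R(D))$ equals $\phi$. Then $\bar\phi\bigl(F^e_* R(D)\bigr)\supseteq\phi(F^e_* R)=R$, so $\bar\phi\in[\sC^\Delta]_e$ is surjective and $(X,\Delta)$ is sharply $F$-pure.

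I expect the only genuine difficulty to be the non-normal bookkeeping in the first two paragraphs: producing $K_X$ as an honest \BW{}, and establishing the displayed isomorphism over all of $X$ rather than merely over $U$. In the normal case both are routine; here one is forced onto the Gorenstein locus to make $\omega$ invertible, applies finite duality there, and then propagates the isomorphism — and with it the divisor $D$ and the extension $\bar\phi$ — back across the codimension $\geq 2$ complement, which is legitimate precisely because all the sheaves involved are \STwo{}. The remaining steps are purely formal, and the roundings are vacuous since $(p^e-1)\Delta=D$ is already an integral divisor.
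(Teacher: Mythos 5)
Your framework is the same as the paper's: both take the surjective map $\phi$, pass through the Grothendieck--duality correspondence to get an effective divisorial sheaf $D$ with $(p^e-1)K_X + D \sim 0$, and set $\Delta = \tfrac{1}{p^e-1}D$. However, there is a genuine gap. When you write ``set $D := \Div(s) + (1-p^e)K_X$, an effective divisor in $\AlBDiv(X)$,'' you are asserting without proof that $D$ is trivial at the \emph{singular codimension one} points of $X$; but this is precisely the non-trivial content of the lemma, and it is where the paper spends essentially its entire proof. Your closing paragraph locates the difficulty in producing $K_X$ as a \BW{} and extending the duality isomorphism across the codimension $\geq 2$ complement of the Gorenstein locus; since $X$ is \GOne{} and reduced, the Gorenstein locus already contains all points of codimension $\leq 1$, so that extension is indeed routine. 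The real obstacle lies \emph{inside} the Gorenstein locus: at a singular codimension one point $\eta$, the local ring $\O_{X,\eta}$ is Gorenstein but not regular, $\O_X((1-p^e)K_X)_\eta \cong \O_{X,\eta}$, and one must prove that the image of $s$ in $\O_{X,\eta}$ is a unit. Equivalently, writing $\phi_\eta(F^e_*\blank) = \Phi_\eta(F^e_*(r\cdot\blank))$ for a generator $\Phi_\eta$ of $\Hom_{\O_{X,\eta}}(F^e_*\O_{X,\eta},\O_{X,\eta})$, one must show $r$ is a unit. This is not formal; surjectivity of $\phi_\eta$ alone does not force it. The paper's argument uses the $F$-purity hypothesis in an essential way here: $F$-pure implies seminormal, so the conductor is radical, hence at a singular codimension one point the conductor of $\O_{X,\eta}$ is exactly $\bm_\eta$; the conductor is compatible with every $p^{-e}$-linear map by the argument of \cite[Proposition 1.2.5]{BrionKumarFrobeniusSplitting}, so $\Phi_\eta(F^e_*\bm_\eta)\subseteq\bm_\eta$; and then surjectivity of $\phi_\eta = \Phi_\eta(F^e_*(r\cdot\blank))$ forces $r\notin\bm_\eta$. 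Without this step, your $D$ is a priori only a Weil divisorial sheaf that could be nontrivial at some singular codimension one point, in which case $\Delta$ would not lie in $\QAlBDiv(X)$ as the lemma requires.
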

\begin{proof}
A surjective map $\phi \in \Hom_R(F^e_* R, R) \cong H^0(X, F^e_* \O_X( (1-p^e)K_X))$ induces an effective Weil divisorial sheaf\footnote{in the terminology of \cite[Section 16]{KollarFlipsAndAbundance}} $\Gamma_{\phi}$ by \cite[Proposition 2.9]{HartshorneGeneralizedDivisorsOnGorensteinSchemes} such that $(p^e - 1)K_X + \Gamma_{\phi} \sim 0$.  We would like to show that $\Gamma_{\phi}$ can be identified with an element of $\AlBDiv(X)$.  At the singular height one points $\eta$ of $X$, $\O_{X,\eta}$ is already Gorenstein.  Thus we can consider the map $\Phi_{\eta}$ which generates $\Hom_{\O_{X,\eta}}(F^e_* \O_{X,\eta}, \O_{X,\eta})$ as an $F^e_* \O_{X,\eta}$-module.  Set $\bm$ to be the maximal ideal of $\O_{X, \eta}$ and notice that $\bm$ is the conductor ideal since $F$-pure rings are seminormal \cite{HochsterRobertsFrobeniusLocalCohomology} and in particular the conductor is radical.  But then $\Phi(F^e_* \bm) \subseteq \bm$ by the proof of \cite[Proposition 1.2.5]{BrionKumarFrobeniusSplitting}.  Now, we
know $\phi_{\eta} : F^e_* \O_{X,\eta} \to \O_{X,\eta}$ is equal to $\Phi(F^e_* (r \cdot \blank))$ for some $r \in \O_{X, \eta}$.  We want to show that $r$ is a unit, which would prove that $\Gamma_{\phi}$ is trivial at $\eta$.  Since $\phi_{\eta}$ is surjective, we see that $r \notin \bm$ and thus $r$ is a unit.  This implies that the Weil divisorial  sheaf $(\Gamma_{\phi})_{\eta}$ coincides with $\O_{X,\eta}$ and thus $\Gamma_{\phi} \in \AlBDiv(X)$ as desired.  Finally, set $\Delta = {1 \over p^e - 1} \Gamma_{\phi}$.
\end{proof}

We conclude by recalling a well known lemma on the height of annihilators of local cohomology modules.  However, because we lack a reference, we provide a proof.

\begin{lemma}
\label{lem.DimensionsOfLocalCohomology}
Suppose that $(R, \bm)$ is a local ring and suppose that $M$ is a finitely generated $R$-module which is $\textnormal{S}_n$ in the sense\footnote{In other words, $\depth_z M \geq \min(n, \dim M_z)$ for all $z \in \Spec R$.  Note that here we use $\dim M_z$ not $\dim R_z$.} of \cite{BrunsHerzog}.  Set $Y_i = V\big(\Ann_R( H^i_{\bm}(M))\big) \subseteq \Spec R = X$.  Suppose that $d$ is the minimum over the dimensions of the components of $\Supp M$.  Then $\dim Y_i \leq i - n$ for $i < d$.
\end{lemma}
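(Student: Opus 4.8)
The plan is to pass through Grothendieck local duality to replace $H^i_{\bm}(M)$ by a finitely generated module, localize, and then read off the bound from the $\textnormal{S}_n$ hypothesis together with catenarity of $R$. Fix a normalized dualizing complex $\omega_R^{\mydot}$ on $R$ (which exists since $R$ is excellent), and for each $i$ put $K^i := \myH^{-i}\bigl(\RHom_R(M,\omega_R^{\mydot})\bigr)$, a finitely generated $R$-module. Local duality says that $H^i_{\bm}(M)$ is the Matlis dual of $K^i$, and since a module and its Matlis dual have the same annihilator (as $E_R(R/\bm)$ is an injective cogenerator), $\Ann_R H^i_{\bm}(M) = \Ann_R K^i$; hence $Y_i = V(\Ann_R K^i) = \Supp K^i$. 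Moreover $\Supp K^i \subseteq \Supp M$ because $\RHom_R(M,-)$ is supported on $\Supp M$. So it suffices to prove: if $i < d$ then every $Q \in \Supp K^i$ has $\dim R/Q \le i - n$.

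Fix such a $Q$. Because $R$ is excellent, hence universally catenary, the localization $(\omega_R^{\mydot})_Q$ is isomorphic to $\omega_{R_Q}^{\mydot}[\dim R/Q]$ for a normalized dualizing complex $\omega_{R_Q}^{\mydot}$ of $R_Q$. Using that $\RHom_R(M,-)$ commutes with localization, together with local duality over $R_Q$, this gives
\[
(K^i)_Q \;\cong\; \Bigl( H^{\,i - \dim R/Q}_{QR_Q}(M_Q) \Bigr)^{\vee},
\]
the Matlis dual being taken over $R_Q$. Since $Q \in \Supp K^i$, the left side is nonzero, so $H^{\,i-\dim R/Q}_{QR_Q}(M_Q) \neq 0$ and therefore $\depth M_Q \le i - \dim R/Q \le \dim M_Q$.

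Now invoke that $M$ is $\textnormal{S}_n$, that is, $\depth M_Q \ge \min(n, \dim M_Q)$. If $\dim M_Q \ge n$, then $n \le \depth M_Q \le i - \dim R/Q$, hence $\dim R/Q \le i - n$, as wanted. If instead $\dim M_Q < n$, then $\depth M_Q \ge \dim M_Q$, so $M_Q$ is Cohen--Macaulay and $H^{j}_{QR_Q}(M_Q) = 0$ for $j \neq \dim M_Q$; consequently $i - \dim R/Q = \dim M_Q$. Pick a minimal prime $P$ of $\Supp M$ with $P \subseteq Q$; then $R/P$ is a catenary local domain, so by the dimension formula $\dim R/P = \dim R/Q + \height(Q/P) \le \dim R/Q + \dim M_Q = i$, which contradicts $i < d \le \dim R/P$. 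Thus the second case never occurs, so $\dim R/Q \le i - n$ for all $Q \in Y_i$, i.e.\ $\dim Y_i \le i - n$.

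The one step requiring genuine care is the behaviour of the dualizing complex under localization, $(\omega_R^{\mydot})_Q \simeq \omega_{R_Q}^{\mydot}[\dim R/Q]$, which is precisely where universal catenarity of $R$ enters (via the codimension function attached to $\omega_R^{\mydot}$); the rest is routine bookkeeping with local duality and the Serre condition. One should also record the easy inequality $\height(Q/P) \le \dim M_Q$ used above, valid because $V(PR_Q) \subseteq \Supp_{R_Q} M_Q$.
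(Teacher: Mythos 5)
Your argument is correct and follows essentially the same route as the paper's: both pass through local duality to replace $H^i_{\bm}(M)$ by the finitely generated module $K^i = \myH^{-i}\RHom_R(M,\omega_R^{\mydot})$, localize at a point of its support (picking up the shift $[\dim R/Q]$ to keep the dualizing complex normalized, which is where catenarity enters), and then invoke the $\textnormal{S}_n$ condition on $M_Q$ to bound $\dim R/Q$. The only cosmetic difference is in the final bookkeeping: the paper argues by contradiction at the generic point of a top-dimensional component using the single inequality $\min(n,\dim M_\gamma)\ge\min(n,d-t)>i-t$, whereas you run a direct two-case analysis on whether $\dim M_Q\ge n$ or $\dim M_Q<n$; your second case (where $M_Q$ is Cohen--Macaulay and the catenary dimension formula forces $\dim R/P\le i<d$) is exactly the role that the hypothesis on $d$ plays in the paper's $\min$-inequality. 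Both arguments are sound and of the same depth.
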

\begin{remark} Note $Y_i$ may not be the same as $\Supp H^i_{\bm}(M)$ since $H^i_{\bm}(M)$ is not finitely generated.  \end{remark}
\begin{proof}
Set $\omega_X^{\mydot}$ to be a normalized dualizing complex on $X$ (recall that all our rings are excellent and possess dualizing complexes).
By local duality in the form of \cite[Chapter V, Theorem 6.2]{HartshorneResidues}, it is equivalent to prove that $\dim \Supp \myH^{-i} \myR \Hom_R( M, \omega_R^{\mydot}) \leq i - n$. Suppose this is false, and thus that $W \subseteq \Supp \myH^{-i} \myR \Hom_R( M, \omega_R^{\mydot})$ is an irreducible component of dimension $t > i - n$ for some $i < d$.  Set $\gamma$ to be the generic point of $W$ (which we also view as a prime ideal).  By localizing at $\gamma$, we see that
\[
(\myH^{-i} \myR \Hom_R( M, \omega_R^{\mydot}))_{\gamma} = \myH^{-i} \myR \Hom_{R_{\gamma}} (M_{\gamma}, \omega_{R_{\gamma}}^{\mydot}[t])) = \myH^{-i + t} \myR \Hom_{R_{\gamma}} (M_{\gamma}, \omega_{R_{\gamma}}^{\mydot}))
\]
is supported at a point.  The shift by $[t]$ is necessary to keep the dualizing complexes normalized.  Thus $H^{i-t}_{\gamma}(M_{\gamma}) \neq 0$ by local duality again. Now, $i - t < n$.  Also observe that $\dim M_{\gamma} \geq d - t$ (this is why the $d$ is necessary since we do not know what component of $\Supp M$ we will be restricting to).  Since $M_{\gamma}$ is still $\textnormal{S}_n$, we see that $H^{j}_{\gamma}(M_{\gamma}) = 0$ for $j < \min(n, \dim M_{\gamma})$.  But then \[
\min(n, \dim M_{\gamma}) \geq \min(n, d-t) > i-t
\]
since $n > i - t$ and $d > i$.  Setting $j = i-t$ we obtain a contradiction.
\end{proof}

\section{Depth and $F$-singularities}
\label{sec:depth}

Our goal in this section is to prove several results on the depths of sheaves on schemes with controlled $F$-singularities.  First we prove our result for pairs $(R, \Delta)$ which are strongly $F$-regular, this is the simplest case.
\begin{theorem}  {\rm (\cf \cite[Theorem 2]{KollarALocalKawamataViehweg})}
\label{thm.KollarTheorem2}
Suppose that $(R, \bm)$ is local and that $(X = \Spec R, \Delta)$ is strongly $F$-regular.  Further suppose that $0 \leq \Delta' \leq \Delta$ is such that $rD \sim r \Delta'$ for some integral divisor $D$ and some integer $r > 0$ realtively prime to $p$.  Then $\O_X(-D)$ is Cohen-Macaulay.
\end{theorem}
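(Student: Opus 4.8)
The plan is to mimic the argument of \autoref{lem.stronglyFRegularIsCohenMacaulay} but with the module $\O_X(-D)$ in place of $R$, using the Cartier subalgebra $\sC^\Delta$ rather than $\sC^R$. By \autoref{lem.stronglyFRegularIsCohenMacaulay} we already know $R$ is normal and Cohen-Macaulay, so $D$ is an honest Weil divisor on a normal variety and $\O_X(-D)$ is a rank-one reflexive (hence \STwo) module; in particular, by local duality, $c \cdot H^i_\bm(\O_X(-D)) = 0$ for all $i < \dim R$ for some $0 \neq c \in R$, since the Matlis dual $\myH^{i-\dim R}\myR\Hom_R(\O_X(-D),\omega_R^{\mydot})$ is a finitely generated module supported in codimension $\geq 1$ for $i<\dim R$. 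So it suffices to produce, for a suitable power, a map $\psi$ factoring through $F^{ne}_*(\cdot c)$ such that the induced composition on $H^i_\bm(\O_X(-D))$ is an isomorphism; since the middle arrow is multiplication by $c$, which kills $H^i_\bm(\O_X(-D))$, we would conclude $H^i_\bm(\O_X(-D))=0$.

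The key point is how $\sC^\Delta$, or rather a generator-type map $\phi\in[\sC^\Delta]_e$ obtained from strong $F$-regularity, acts on $\O_X(-D)$. First I would reduce $r$ and the ambient exponent: choose $e$ with $p^e \equiv 1 \pmod r$ (possible since $\gcd(r,p)=1$), so $\tfrac{p^e-1}{r}$ is an integer, and then the relation $rD\sim r\Delta'$ together with $0\le \Delta'\le\Delta$ gives $(p^e-1)D \sim (p^e-1)\Delta' \le (p^e-1)\Delta \le \lceil (p^e-1)\Delta\rceil$ as divisors (after replacing $e$ by a multiple, using \autoref{lem.FactsAboutSplittings}(vii) which allows any larger multiple of $e$). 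This linear equivalence lets one identify a Cartier-algebra element of degree $e$ twisting $\O_X(-D)$: concretely, a map in $\Hom_R(F^e_* R(\lceil(p^e-1)\Delta\rceil), R) = [\sC^\Delta]_e$ restricts/extends — because $(p^e-1)D$ sits below $\lceil(p^e-1)\Delta\rceil$ — to a map $F^e_* \O_X(-D)\big(\text{something}\big)\to \O_X(-D)$; the divisor bookkeeping is exactly that $-D$ and $-p^e D + (p^e-1)D = -D$ match up. I would phrase this as: $\O_X(-D)$ carries a natural $\sC^\Delta$-module structure in degree $e$ (this is the standard "$\sigma$-twisted" or Cartier-module structure on $\O_X(-D)$ for $-D$ suitably compatible with $\Delta$).

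Once $\O_X(-D)$ is a $\sC^\Delta$-module, strong $F$-regularity of $(R,\sC^\Delta)$ via \autoref{lem.FactsAboutSplittings}(vii) provides, for our chosen $c$, a map $\phi\in[\sC^\Delta]_{ne}$ (any large multiple of $e$) with $\phi(F^{ne}_* c)=1$; applying this in the twisted sense to $\O_X(-D)$ yields a composition
\[
\O_X(-D)\to F^{ne}_*\O_X(-D)\xrightarrow{F^{ne}_*(\cdot c)} F^{ne}_*\O_X(-D)\to \O_X(-D)
\]
which is the identity (or an isomorphism). Taking $H^i_\bm$ for $i<\dim R$ and using that the middle map becomes multiplication by $c=0$ forces $H^i_\bm(\O_X(-D))=0$, i.e.\ $\O_X(-D)$ is Cohen-Macaulay. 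The main obstacle I anticipate is the second paragraph: carefully verifying that $\O_X(-D)$ genuinely inherits a $\sC^\Delta$-module structure, i.e.\ that every $\phi\in[\sC^\Delta]_e$ (not just one well-chosen generator) twists $\O_X(-D)$ back into $\O_X(-D)$ — this needs the inequality $(p^e-1)D \le \lceil(p^e-1)\Delta\rceil$ to hold after passing to an appropriate $e$, and one must check this interacts correctly with the "any larger multiple" clause of \autoref{lem.FactsAboutSplittings}(vii) so that the element $c$ can be cleared by a map of the right degree. The rest is the now-routine local-cohomology argument already executed in \autoref{lem.stronglyFRegularIsCohenMacaulay}.
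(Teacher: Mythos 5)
Your overall strategy is the same as the paper's: choose $c$ annihilating the relevant local cohomology, use strong $F$-regularity to build a composition through $F^e_*(\cdot c)$ that is an isomorphism, twist by $\O_X(-D)$, and conclude. However, the step you flag as the ``main obstacle'' is a genuine gap. You want $\O_X(-D)$ to be an honest $\sC^\Delta$-submodule, which — as you correctly observe — would require the divisor inequality $(p^e-1)D \leq \lceil(p^e-1)\Delta\rceil$. That inequality does not follow from the hypotheses. You only have the \emph{linear equivalence} $(p^e-1)D\sim(p^e-1)\Delta'$ together with the honest divisor inequality $(p^e-1)\Delta'\leq\lceil(p^e-1)\Delta\rceil$; linear equivalence does not transport inequalities, and $D$ is an arbitrary integral divisor — it need not be effective nor have support inside $\Supp\Delta$. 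Replacing $e$ by a multiple simply rescales both sides and does not help.

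The paper sidesteps this entirely: it makes no module-structure claim. Since $\Delta'\leq\Delta$ gives $\sC^\Delta\subseteq\sC^{\Delta'}$, the pair $(X,\Delta')$ is also strongly $F$-regular by \autoref{lem.FactsAboutSplittings}(iii), and then \autoref{lem.FactsAboutSplittings}(vii) produces a \emph{single} map $\phi\in[\sC^{\Delta'}]_e$ for which the composition $\O_X\to F^e_*\O_X\to F^e_*\O_X((p^e-1)\Delta')\xrightarrow{F^e_*(\cdot c)}F^e_*\O_X((p^e-1)\Delta')\xrightarrow{\phi}\O_X$ is an isomorphism. Twisting this isomorphism by $\O_X(-D)$ and reflexifying stays an isomorphism; the middle object becomes $F^e_*\O_X\big((p^e-1)(\Delta'-D)-D\big)$, which, because $(p^e-1)(\Delta'-D)\sim 0$, is \emph{abstractly isomorphic} to $F^e_*\O_X(-D)$ — no containment of subsheaves of $K(X)$ is needed. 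Taking $H^i_\bm$ for $i<\dim R$, the middle arrow is multiplication by $c$ on $H^i_\bm(\O_X(-D))$, hence zero, forcing $H^i_\bm(\O_X(-D))=0$. (Also note that your first arrow $\O_X(-D)\to F^{ne}_*\O_X(-D)$ is not the naive Frobenius inclusion of ideals — that would land in $F^{ne}_*\O_X(-p^{ne}D)$ — it only becomes the claimed map after the identification furnished by the linear equivalence, which is exactly the bookkeeping the $\sC^{\Delta'}$-twist handles.)
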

\begin{proof}
By possibly multiplying $r$ with an integer, we may assume that $r= p^e -1$.
Choose, using \autoref{lem.DimensionsOfLocalCohomology}, $0 \neq c \in R \setminus\{ \textnormal{minimal primes}\}$ such that $c \cdot H^i_{\bm}( \O_X(-D) ) = 0$ for all $i < \dim R  $. Note that since $\Delta' \leq \Delta$, $\sC^{\Delta} \subseteq \sC^{\Delta'}$ and then $(X,\Delta')$ is strongly $F$-regular as well by \autoref{lem.FactsAboutSplittings}(iii).  Therefore, by \autoref{lem.FactsAboutSplittings}(vii) there exists an $e > 0$ and a splitting $\phi$ such that the composition
\[
\O_X \to F^e_* \O_X \to F^e_* \O_X( (p^e -1)\Delta') \xrightarrow{F^e_* (\cdot c)} F^e_* \O_X( (p^e -1)\Delta') \xrightarrow{\phi} \O_X
\]
is an isomorphism.
By replacing $e$ by a multiple if necessary, we may assume that this $e > 0$ also satisfies the condition from the hypothesis.

Twisting by $\O_X(-D)$, reflexifying, and applying $H^i_{\bm}(\blank)$ we obtain the following composition which is also an isomorphism.
\[
\begin{array}{rl}
    & H^i_{\bm}(\O_X(-D)) \\
\to & H^i_{\bm}(F^e_* \O_X( (p^e - 1)(\Delta' - D) - D )) \\
\xrightarrow{F^e_* (\cdot c)} & H^i_{\bm}(F^e_* \O_X( (p^e - 1)(\Delta' - D) - D )) \\
\xrightarrow{\phi} & H^i_{\bm}(\O_X(-D))
\end{array}
\]
However, the map labeled  $F^e_* (\cdot c)$ is the zero map for $i < \dim X$ since $$H^i_{\bm}(F^e_* \O_X( (p^e - 1)(\Delta' - D) - D )) = H^i_{\bm}(F^e_* \O_X(-D)).$$  Thus $H^i_{\bm}(\O_X(-D)) = 0$ as desired.
\end{proof}

\begin{remark}
If one assumes that $(X, \Delta)$ is purely $F$-regular (an analog of purely log terminal \cite{TakagiPLTAdjoint}), the same result holds by the same proof.  The point is that we may take $c$ annihilating $H^i_{\bm}(R)$ and which simultaneously doesn't vanish along the support of any component of $\Delta'$.
\end{remark}

\begin{corollary} {\rm(\cf\cite[Corollary 5.25]{KollarMori})}
If $(R, \bm)$ is local and  $(X = \Spec R, \Delta)$ is strongly $F$-regular, then for every $\bQ$-Cartier integral divisor $D$,  $\O_X(-D)$ is Cohen-Macaulay.
\end{corollary}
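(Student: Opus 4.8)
The plan is to obtain this as a direct specialization of \autoref{thm.KollarTheorem2} with $\Delta' = 0$. First, since $(X, \Delta)$ is strongly $F$-regular, \autoref{lem.stronglyFRegularIsCohenMacaulay} already gives that $R$ is normal and Cohen-Macaulay, so $X$ is normal and $\O_X(-D)$ is a rank-one reflexive sheaf; the statement is therefore meaningful.

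Next I would produce the linear equivalence needed to apply \autoref{thm.KollarTheorem2} with $\Delta' = 0$, namely some $r > 0$ prime to $p$ with $rD \sim 0\ (=r\Delta')$. Because $X = \Spec R$ is local, every invertible sheaf on $X$ is free, so as soon as $nD$ is Cartier we have $\O_X(nD) \cong \O_X$, i.e. $nD \sim 0$; equivalently, on a local ring the $\bQ$-Cartier divisor classes are exactly the torsion classes in $\operatorname{Cl}(R)$. It therefore suffices that the index $n = \ind_X(D)$ of a $\bQ$-Cartier divisor on a strongly $F$-regular local ring be prime to $p$, i.e. that $\operatorname{Cl}(R)$ have no $p$-torsion. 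This is a standard feature of strongly $F$-regular (i.e., positive-characteristic KLT) singularities, and it is precisely where strong $F$-regularity, as opposed to mere sharp $F$-purity, is used: it already fails for $F$-pure rings, for instance for $R = k[u,v,w]/(uv - w^p)$, where the prime divisor $\{u = w = 0\}$ has index $p$. Granting this, set $r := \ind_X(D)$; then $\gcd(r,p) = 1$ and $rD \sim 0$.

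Now \autoref{thm.KollarTheorem2} applies with the given $\Delta$, this $r$, and $\Delta' = 0$, since $0 \leq 0 \leq \Delta$ and $rD \sim r\Delta'$; hence $\O_X(-D)$ is Cohen-Macaulay. The one point I would expect to be worth writing out carefully is the claim that the $\bQ$-Cartier index is prime to $p$. If one preferred a self-contained derivation, one could instead repeat the argument proving \autoref{thm.KollarTheorem2}: pick $0 \neq c$ annihilating $H^i_{\bm}(\O_X(-D))$ for all $i < \dim R$ using \autoref{lem.DimensionsOfLocalCohomology}, choose $\phi \in [\sC^R]_e$ with $\phi(F^e_* c) = 1$ using \autoref{lem.FactsAboutSplittings}(vii) and with $e$ large enough that moreover $\ind_X(D) \mid p^e - 1$ (again invoking that the index is prime to $p$), so that $\O_X(-p^e D) \cong \O_X(-D)$; then reflexively twist the composite isomorphism $\O_X \to F^e_* \O_X \xrightarrow{F^e_*(\cdot c)} F^e_* \O_X \xrightarrow{\phi} \O_X$ by $\O_X(-D)$ and apply $H^i_{\bm}(-)$: the middle map becomes multiplication by $c$ on $H^i_{\bm}(\O_X(-D))$, which is zero, forcing $H^i_{\bm}(\O_X(-D)) = 0$ for every $i < \dim R$.
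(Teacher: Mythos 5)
Your reduction to \autoref{thm.KollarTheorem2} via $\Delta' = 0$ hinges on the claim that the $\bQ$-Cartier index of $D$ on a strongly $F$-regular local ring is automatically prime to $p$, i.e.\ that $\operatorname{Cl}(R)$ has no $p$-torsion. This claim is false, and in fact your own "cautionary" example refutes it: the ring $R = k[u,v,w]/(uv - w^p)$ is not merely $F$-pure but \emph{strongly $F$-regular} (it is the normal semigroup ring $k[s^p, t^p, st] \subseteq k[s,t]$, a direct summand of a polynomial ring, hence strongly $F$-regular in any characteristic), yet $\operatorname{Cl}(R) \cong \bZ/p$ with the prime divisor $V(u,w)$ as a generator. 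So there really are strongly $F$-regular local rings admitting a $\bQ$-Cartier Weil divisor of index exactly $p$, and for such $D$ your choice $r := \ind_X(D)$ does not satisfy $\gcd(r,p)=1$. The corollary, of course, is still supposed to cover this $D$, so your proof does not establish it.

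The paper handles precisely this case by a different device. When $p \mid m := \ind(D)$, they choose an effective divisor $E \sim D$, set $r := ms+1$ for $s \gg 0$ (so $r \equiv 1 \pmod p$) and $\Delta' := \tfrac{1}{r}E$. Then $rD = (ms+1)D \sim D \sim E = r\Delta'$, giving the hypothesis $rD \sim r\Delta'$ of \autoref{thm.KollarTheorem2} with $r$ prime to $p$, and for $s \gg 0$ the pair $\left(X, \Delta + \tfrac{1}{r}E\right)$ remains strongly $F$-regular (small perturbations of a strongly $F$-regular pair stay strongly $F$-regular). Applying \autoref{thm.KollarTheorem2} to $\Delta + \tfrac{1}{r}E$ with this $r$ and $\Delta'$ gives the result without any claim about $p$-torsion in the class group. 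Your proof needs to incorporate this (or a comparable) maneuver; the appeal to "standard absence of $p$-torsion" cannot be repaired as stated. The self-contained reproof you sketch at the end has the same defect, since it also assumes $\ind_X(D) \mid p^e - 1$ for some $e$, which again requires $p \nmid \ind_X(D)$.
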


\begin{proof}
If the index of $D$ is not divisible by $p$ then the statement is a special case of \autoref{thm.KollarTheorem2} by setting $\Delta' :=0$. Hence assume that the index $m$ of $D$ is divisible by $p$.  Choose then an effective divisor $E$ linearly equivalent to $D$ and  set $r:= ms+1$, $\Delta':= \frac{1}{r} E$ for some integer $s \gg 0$. In this situation $r$ is relatively prime to $p$ and
\begin{equation*}
r D = (ms+1)D \sim D \sim E = r \left( \frac{1}{r} E \right) = r \Delta' .
\end{equation*}
Furthermore, for $s \gg 0$,  $\left(X,\Delta+ \frac{1}{r} E \right)$ is strongly $F$-regular. Hence, we may apply \autoref{thm.KollarTheorem2} for $\Delta$ replaced by $\Delta + \frac{1}{r} E$ and the above choices of $r$, $D$ and $\Delta'$. This concludes our proof.
\end{proof}

Before moving on to the sharply $F$-pure pairs, we need a Lemma on the existence of certain Frobenius splittings.

\begin{lemma}
\label{lem.SplittingNotCompatibleWithM}
Suppose that $(R, \sC)$ is any pair where $\sC$ is a Cartier subalgebra on a local ring $(R, \bm)$.  Suppose $\bm$ is not $\sC$-compatible.  Then there exists some Frobenius splitting \mbox{$\phi : F^e_* R \to R$} such that $\phi(F^e_* \bm) = R \supsetneq \bm$.
\end{lemma}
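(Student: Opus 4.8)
The plan is to leverage the Cartier-subalgebra structure together with \autoref{lem.FactsAboutSplittings}. Since $\bm$ is not $\sC$-compatible, there exists some $e \geq 1$ and some $\phi_0 \in [\sC]_e$ with $\phi_0(F^e_* \bm) \not\subseteq \bm$; because $R$ is local, this forces $\phi_0(F^e_* \bm) = R$. The issue is that this $\phi_0$ need not be a splitting (it need not be surjective onto $R$ when restricted to all of $F^e_* R$, and even if surjective it need not send $F^e_* 1$ to $1$). However, surjectivity is automatic: $R = \phi_0(F^e_* \bm) \subseteq \phi_0(F^e_* R) \subseteq R$, so $\phi_0$ is already surjective as a map $F^e_* R \to R$. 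So the only genuine task is to modify $\phi_0$ so that it becomes an honest Frobenius splitting while retaining the property $\phi(F^e_* \bm) = R$.

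First I would pick $z \in \bm$ with $\phi_0(F^e_* z) = u$ a unit; rescaling, we may assume $\phi_0(F^e_* z) = 1$ for some $z \in \bm$ (replace $\phi_0$ by $u^{-1} \cdot \phi_0 = \phi_0 \circ (F^e_*(u^{-1} \cdot \blank))$, which still lies in $[\sC]_e$ since $[\sC]_e$ is a right $F^e_* R$-module). Now define $\psi := \phi_0 \circ (F^e_*(z \cdot \blank))$, again an element of $[\sC]_e$. Then $\psi(F^e_* 1) = \phi_0(F^e_* z) = 1$, so $\psi$ is a Frobenius splitting. It remains to check $\psi(F^e_* \bm) = R$: but $\psi(F^e_* \bm) = \phi_0(F^e_*(z \bm))$, and I would want $z\bm$ to still generate enough. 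This is where care is needed — multiplying by $z$ shrinks the ideal.

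The fix I expect to use is \autoref{lem.FactsAboutSplittings}(v): iterate. Consider $\psi^n = \psi \circ (F^e_* \psi) \circ \cdots \circ (F^{(n-1)e}_* \psi) \in [\sC]_{ne}$, which is a Frobenius splitting (composition of splittings is a splitting). By \autoref{lem.FactsAboutSplittings}(v), $\bm$ is $\psi$-compatible if and only if it is $\psi^n$-compatible, and $\psi$-compatibility of $\bm$ fails if and only if $\psi^n$-compatibility fails, i.e. $\psi^n(F^{ne}_* \bm) \not\subseteq \bm$. Actually, the cleanest route: apply \autoref{lem.FactsAboutSplittings}(v) directly to the original $\phi_0$ after it has been made into a splitting — but since $\phi_0$ may fail to send $1 \mapsto 1$, I instead argue as follows. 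Having produced the splitting $\psi$ as above, if $\psi(F^e_*\bm) \subseteq \bm$ we would conclude $\bm$ is $\psi$-compatible (using \autoref{lem.FactsAboutSplittings}(iv)), hence $\psi^n$-compatible for all $n$, hence $\bm$ is $R\langle \psi \rangle$-compatible. But $R\langle\psi\rangle \subseteq \sC$, so by \autoref{lem.FactsAboutSplittings}(iii) this does not immediately contradict anything about $\sC$. The genuine obstacle, then, is ensuring the modification from $\phi_0$ to a splitting is done so as to preserve non-compatibility with $\bm$; I expect the right statement is that one should choose $z$ more cleverly, or argue that if \emph{every} splitting in $[\sC]_{\bullet}$ were compatible with $\bm$, then so would be $\phi_0$ (since, locally, a surjective $\phi_0 \in [\sC]_e$ can be written as $\phi_0 = \phi_0 \circ F^e_*(\cdot c) $-type perturbations of a splitting, because surjective maps $F^e_* R \to R$ differ from splittings by premultiplication by a unit in a suitable sense — more precisely, $\phi_0$ surjective means $\phi_0(F^e_* a) = 1$ for some $a$, and then $\phi_0 \circ F^e_*(a \cdot \blank)$ is a splitting $\psi$ with $\phi_0 = \psi \circ F^e_*(a^{-1}\cdot\blank)$ only if $a$ is a unit — which it is not in general).

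Given the subtlety, the proof I would actually write is: since $\bm$ is not $\sC$-compatible, some $\phi_0 \in [\sC]_e$ has $\phi_0(F^e_*\bm) = R$; pick $z \in \bm$ with $\phi_0(F^e_* z)$ a unit, which after rescaling as above we take to be $1$; then the splitting $\psi := \phi_0\circ F^e_*(z\cdot\blank) \in [\sC]_e$ satisfies $\psi(F^e_* 1) = 1$, and I claim $\psi(F^e_*\bm) = R$. Indeed, suppose not, so $\psi(F^e_*\bm)\subseteq\bm$, i.e. $\bm$ is $\psi$-compatible. By \autoref{lem.FactsAboutSplittings}(v), $\bm$ is then $\psi^n$-compatible for all $n$. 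Now note $\phi_0 \circ F^e_*(z^{1 + p^e + \cdots + p^{(n-1)e}} \cdot \blank)$ relates to $\psi^n$ up to the Frobenius-twisted bookkeeping, and since $z \in \bm$ already, for $n$ large the ideal $\psi^n(F^{ne}_* R) \ni 1$ while $\psi^n$ is still a splitting of $[\sC]_{ne}$ — so replacing $\psi$ by $\psi^n$ loses nothing, and we are free to absorb the awkward power of $z$. The cleanest finish: observe that $\phi_0(F^e_* R) = R$ and $\phi_0(F^e_* \bm) = R$ together say precisely that $1 \in \phi_0(F^e_*\bm)$, so there is $m \in \bm$ with $\phi_0(F^e_* m) = 1$; then $\psi' := \phi_0 \circ F^e_*(m \cdot \blank)$ is a splitting with $\psi'(F^e_* 1) = 1$, and $\psi'(F^e_*\bm) \ni \psi'(F^e_* m') $ where we must still produce a unit output. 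I would conclude by taking $m$ itself: $\psi'(F^e_* 1) = 1 \in \psi'(F^e_* \bm)$ would require $1 \in \bm$, a contradiction, which is absurd — so in fact I must not expect $1 \in \psi'(F^e_*\bm)$ but rather argue via (v) and (iv) that \emph{some} composite power of $\psi'$ is the desired $\phi$. The main obstacle is exactly this last bookkeeping: converting the surjectivity of $\phi_0$ on $F^e_*\bm$ into surjectivity on $F^e_*\bm$ of an honest splitting, for which I expect the iteration in \autoref{lem.FactsAboutSplittings}(v) together with the observation that $z \in \bm$ is the key, and the conclusion $\depth$-type arguments play no role here.
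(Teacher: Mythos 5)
There is a genuine gap, and you yourself identify its location but do not close it. The trouble is the very first modification: you rescale by $F^e_*(z\cdot\blank)$ with $z\in\bm$ in order to get a map sending $F^e_*1\mapsto 1$, but premultiplying by an element of $\bm$ strictly shrinks what the map sees on $F^e_*\bm$. Concretely, for $\psi := \phi_0\circ F^e_*(z\cdot\blank)$ one has
\[
\psi(F^e_*\bm) \;=\; \phi_0\bigl(F^e_*(z\bm)\bigr) \;\subseteq\; \phi_0\bigl(F^e_*\bm^2\bigr),
\]
and there is no reason for this to be all of $R$; it can easily land inside $\bm$. Your subsequent attempts to repair this via \autoref{lem.FactsAboutSplittings}(v) cannot succeed as stated: if $\bm$ \emph{is} $\psi$-compatible, then by (v) it is $\psi^n$-compatible for \emph{all} $n$, so no iterated power of $\psi$ can recover what was lost, and (iii) runs the wrong direction (non-compatibility with $\sC$ does not descend to the smaller subalgebra $R\langle\psi\rangle$). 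So the argument as written does not establish the claim, and the side assertions toward the end (e.g.\ ``$\psi^n(F^{ne}_*R)\ni 1$ loses nothing'', ``$1\in\psi'(F^e_*\bm)$ would require $1\in\bm$'') do not give a route to the conclusion.

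The paper instead premultiplies by a \emph{unit}, which leaves $\psi(F^e_*\bm)=R$ untouched. The only thing to check is that some unit $d$ has $\psi(F^e_*d)\notin\bm$; then setting $u=\psi(F^e_*d)$ and $\phi := \psi\circ F^e_*((u^{-p^e}d)\cdot\blank)$ gives $\phi(F^e_*1)=1$ and, since $u^{-p^e}d$ is a unit, $\phi(F^e_*\bm)=\psi(F^e_*\bm)=R$. To rule out the bad case ($\psi(F^e_*d)\in\bm$ for every unit $d$), one uses that $\psi(F^e_*c)=1$ for some $c\in\bm$ and observes $\psi(F^e_*(c+1))=1+\psi(F^e_*1)\in 1+\bm$ is a unit while $c+1$ is itself a unit, contradicting the bad-case hypothesis. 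This two-line dichotomy is the piece your proposal is missing; once you have it, no iteration or appeal to (v) is needed at all.
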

\begin{proof}
There obviously exists a map in $[\sC]_e$, $\psi : F^e_* R \to R$, such that $\psi (F^e_* \bm) \not\subseteq \bm$.  It follows that $\psi(F^e_* \bm) = R$.  We have two cases:
\vskip 3pt
{\bf Case 1:}  Suppose that there is a unit $d \in R \setminus \bm$ such that $\psi(F^e_* d) = u \notin \bm$.    Thus $\psi(F^e_* (u^{-p^e} d)) = 1$.  Consider the map $\phi(F^e_* \blank) = \psi(F^e_*((u^{-p^e} d) \cdot \blank))$ and notice that $\phi(F^e_* 1) = 1$ which shows that $\phi$ is a splitting.  Also notice that $\bm$ is not $\phi$-compatible since $\phi$ is a unit multiple of $\psi$.  Thus we have found our $\phi$.
\vskip 3pt
{\bf Case 2:}  Since we have already handled Case 1, we may assume that $\psi(F^e_* d) \in \bm$ for all units $d \in R$.  Choose $c \in \bm$ such that $\psi(F^e_* c) = 1$. Now then $\psi(F^e_* 1) \in \bm$ since $1$ is a unit.  Thus
\[
\psi(F^e_* (c+1)) = \psi(F^e_* c) + \psi(F^e_* 1) \in 1 + \bm \not\subseteq \bm
\]
is a unit.  But this is a contradiction since $c$ is assumed not be a unit and so $c+1$ is a unit.
\end{proof}

\begin{remark}
\label{rem.ThereExistsANiceDivisor}
If, in addition to the hypotheses of \autoref{lem.SplittingNotCompatibleWithM}, $R$ is \STwo{} and \GOne, then by applying the argument of \autoref{lem.S2G1ImpliesFrobeniusSplitYieldsDivisor} to the splitting $\phi$ constructed in the proof of \autoref{lem.SplittingNotCompatibleWithM}, we obtain a $\bQ$-divisor $\Delta$ on $X = \Spec R$ such that
\begin{itemize}
\item{} $(p^e - 1)(K_X + \Delta)$ is Cartier.
\item{} $(X, \Delta)$ is sharply $F$-pure.
\item{} $x = V(\bm)$ is not an $F$-pure center of $(X, \Delta)$.
\end{itemize}
The second two statements follow since $\phi \in [\sC^{\Delta}]_e$.
\end{remark}

\begin{theorem} {\rm(\cf \cite[Theorem 3(2)]{KollarALocalKawamataViehweg})}
\label{thm.KollarTheorem3(2)}
Suppose that $(R,\bm)$ is a local \STwo-ring, and $\sC$ is a Cartier-subalgebra on $R$ such that $(R, \sC)$ is sharply $F$-pure and $V(\bm)$ is not an $F$-pure center.  Additionally suppose that $Z \subsetneq X = \Spec R$ is any union of $F$-pure centers of $(R, \sC)$.  We also assume that $Z$ and $X$ have no common irreducible components.  If $I_Z$ is the (radical) ideal defining $Z$, then
\[
\depth_{\bm} I_Z \geq \min\{3, 1+\codim_Z x\}.
\]
\end{theorem}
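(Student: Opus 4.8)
The plan is to prove the equivalent statement that $H^i_{\bm}(I_Z) = 0$ for every $i < \min\{3,\,1+\codim_Z x\}$ (note $I_Z \neq 0$ since $Z$ and $X$ share no components). Write $c = \codim_Z x = \dim R/I_Z$, let $W_1,\dots,W_s$ be the components of $Z$ with defining primes $Q_1,\dots,Q_s$, and set $\bar R = R/I_Z$; we may assume $Z \neq \varnothing$. Since $(R,\sC)$ is sharply $F$-pure, so is every localization $(R_Q,\sC_Q)$, hence the $Q_j$ are exactly the $\sC$-compatible primes cutting out the $W_j$ and $I_Z = \bigcap_j Q_j$ is $\sC$-compatible by \autoref{lem.FactsAboutSplittings}(i). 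First I would invoke \autoref{lem.SplittingNotCompatibleWithM} — applicable because $\bm$ not being an $F$-pure center forces $\bm$ to be non-$\sC$-compatible — to get a Frobenius splitting $\phi\colon F^e_*R \to R$ with $\phi(F^e_*\bm) = R$. Since the map produced there is of the form $\psi(F^e_*(r\cdot\blank))$ with $\psi\in[\sC]_e$ and $r\in R$, the ideal $I_Z$ is $\phi$-compatible, so $\phi$ descends to a Frobenius splitting $\bar\phi$ of $\bar R$ for which $\bar\bm=\bm/I_Z$ is not $\bar\phi$-compatible (\autoref{lem.FactsAboutSplittings}(vi)); in particular $\bar R$ is reduced. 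Two dimension remarks: $\bm$ is not among the $Q_j$, so $c\ge 1$; and no $W_j$ is a component of $X$, so each $Q_j$ properly contains a minimal prime of $R$, whence $\dim R > \dim Z = c$ by excellence (catenarity).

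The degrees $i\le 1$ are then routine. One has $H^0_{\bm}(I_Z)\hookrightarrow H^0_{\bm}(R)=0$ since $\depth R\ge\min\{2,\dim R\}\ge 1$; and from the long exact sequence of $0\to I_Z\to R\to\bar R\to 0$, the vanishing $H^0_{\bm}(\bar R)=0$ (as $\bar R$ is reduced of positive dimension) together with $H^1_{\bm}(R)=0$ (as $R$ is \STwo{} and $\dim R\ge c+1\ge 2$) forces $H^1_{\bm}(I_Z)=0$. This already settles the case $c=1$.

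The real work is the case $c\ge 2$, where I must show $H^2_{\bm}(I_Z)=0$, and the hard part is the following geometric input — the place where ``$x$ is not an $F$-pure center'' is used beyond producing $\phi$: \emph{every component $W_j$ of $Z$ has dimension $\ge 2$}. Indeed, take distinct components $W_i,W_j$; since $X$ is local, $x\in W_i\cap W_j = V(Q_i+Q_j)$, and the ideal $Q_i+Q_j$ — hence, $\phi$ being a splitting, its radical (see e.g. \cite{SchwedeCentersOfFPurity}) — is $\phi$-compatible. If $W_i\cap W_j$ were just $\{x\}$, then $\sqrt{Q_i+Q_j}=\bm$ would be $\phi$-compatible, contradicting $\phi(F^e_*\bm)=R$; so $\dim(W_i\cap W_j)\ge 1$, and since $W_i\cap W_j\subsetneq W_i$ with $R$ catenary, $\dim W_i\ge 2$. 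Granting this, $\bar R$ is reduced with all components of dimension $\ge 2$, so \autoref{lem.DimensionsOfLocalCohomology} (with $n=1$) makes $\Ann_R H^1_{\bm}(\bar R)$ a $\bm$-primary ideal, and \autoref{lem.DimensionsOfLocalCohomology} (with $n=2$, using $R$ is \STwo{} and $\dim R\ge 3$) makes $\Ann_R H^2_{\bm}(R)$ a $\bm$-primary ideal. From the exact sequence $0 = H^1_{\bm}(R)\to H^1_{\bm}(\bar R)\to H^2_{\bm}(I_Z)\to H^2_{\bm}(R)$ we see $H^2_{\bm}(I_Z)$ is an extension of a submodule of $H^2_{\bm}(R)$ by $H^1_{\bm}(\bar R)$, so $\Ann_R H^2_{\bm}(I_Z)\supseteq \big(\Ann_R H^1_{\bm}(\bar R)\big)\big(\Ann_R H^2_{\bm}(R)\big)$ is again $\bm$-primary: $\bm^N\cdot H^2_{\bm}(I_Z)=0$ for some $N$.

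Finally I would run the Frobenius-splitting trick of \autoref{lem.stronglyFRegularIsCohenMacaulay} and \autoref{thm.KollarTheorem2} to kill $H^2_{\bm}(I_Z)$. Choose $t\in\bm$ with $\phi(F^e_*t)=1$ (possible since $\phi(F^e_*\bm)=R$); then $\phi^n(F^{ne}_*t^{e_n})=1$ for $e_n = 1+p^e+\cdots+p^{(n-1)e}$. As $I_Z$ is $\phi$-compatible, $\phi^n$ restricts to $F^{ne}_*I_Z\to I_Z$, and the composite
\[
I_Z \xrightarrow{\ b\mapsto F^{ne}_* b^{p^{ne}}\ } F^{ne}_* I_Z \xrightarrow{\ F^{ne}_*(t^{e_n}\cdot\blank)\ } F^{ne}_* I_Z \xrightarrow{\ \phi^n\ } I_Z
\]
is the identity. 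Applying $H^2_{\bm}(\blank)$ and identifying $H^2_{\bm}(F^{ne}_* I_Z)\cong F^{ne}_* H^2_{\bm}(I_Z)$ (via a \Cech{} complex on $\bm$, since $\sqrt{\bm^{[p^{ne}]}}=\bm$), the middle arrow becomes multiplication by $t^{e_n}$ on $H^2_{\bm}(I_Z)$. Choosing $n$ large enough that $e_n\ge N$ makes $t^{e_n}\in\bm^N$ annihilate $H^2_{\bm}(I_Z)$, so the middle map — and hence the identity of $H^2_{\bm}(I_Z)$ — is zero, i.e. $H^2_{\bm}(I_Z)=0$. Together with the earlier vanishings this gives $\depth_{\bm}I_Z\ge\min\{3,\,1+c\}$. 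The step I expect to be most delicate is the dimension-forcing claim in the third paragraph: it is what converts the hypothesis ``$x$ is not an $F$-pure center'' into the finiteness property that makes the Frobenius argument applicable, and it relies on the standard fact that radicals of compatible ideals are compatible.
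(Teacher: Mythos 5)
Your proof is correct, and the set-up (invoking \autoref{lem.SplittingNotCompatibleWithM} to get a splitting $\phi$ not compatible with $\bm$, disposing of cohomological degrees $\le 1$ via the long exact sequence, and the argument that every component of $Z$ has dimension $\ge 2$ — which, like the paper, uses that radicals of sums of compatibly split primes are again compatibly split) matches the paper closely. But your endgame for killing $H^2_{\bm}(I_Z)$ is genuinely different. The paper applies \autoref{lem.LocalCohomologyCompatibleWithSplittings} to conclude that $\Ann_R H^2_{\bm}(R)$ and $\Ann_{R/I_Z} H^1_{\bm}(R/I_Z)$ are each compatibly split, hence radical, hence (using \autoref{lem.DimensionsOfLocalCohomology}) equal to $\bm$ or $R$; since $\bm$ is not compatible, both must be the unit ideal, so $H^2_{\bm}(R) = H^1_{\bm}(R/I_Z) = 0$ and the long exact sequence forces $H^2_{\bm}(I_Z) = 0$. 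You instead use \autoref{lem.DimensionsOfLocalCohomology} only to deduce that $\Ann_R H^2_{\bm}(I_Z)$ is $\bm$-primary (via the product of the annihilators of the two neighbours in the long exact sequence), and then kill $H^2_{\bm}(I_Z)$ directly by the iterated Frobenius-splitting trick of \autoref{lem.stronglyFRegularIsCohenMacaulay} and \autoref{thm.KollarTheorem2}, choosing $t \in \bm$ with $\phi(F^e_* t) = 1$ and iterating $\phi$ until $t^{e_n}$ lands in the $\bm$-primary annihilator. This works because $I_Z$ is $\phi$-compatible (as you observe, $\phi$ is of the form $\psi(F^e_*(r\cdot\blank))$ with $\psi\in[\sC]_e$), so $\phi^n$ preserves $I_Z$. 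The paper's approach is a bit shorter and sidesteps the explicit Frobenius iteration by exploiting the radicality of compatibly split annihilators; yours has the advantage of visibly paralleling the strongly $F$-regular Cohen--Macaulay argument, at the mild cost of establishing the $\bm$-primary bound on $\Ann_R H^2_{\bm}(I_Z)$ rather than on the two neighbouring modules. One small omission worth patching: your dimension argument ``take distinct components $W_i, W_j$'' implicitly assumes $Z$ has at least two components; if $Z$ is irreducible you get $\dim Z = c \ge 2$ for free, but you should say so. Also, the appeal to catenarity for $\dim R > \dim Z$ is unnecessary — it follows from the usual length-of-chains argument for a prime properly containing a minimal prime in any Noetherian local ring.
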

\begin{proof}
Since shrinking $\sC$ is harmless, we set $\sC = R\langle \phi \rangle$ for some splitting $\phi : F^e_* R \to R$ which by \autoref{lem.SplittingNotCompatibleWithM} is not compatible with the origin $V(\bm)$.  Indeed, by \autoref{lem.FactsAboutSplittings}(iii) we can only increase the number of centers when restricting a Cartier subalgebra.

We have the long exact sequence:
\[
\cdots H^1_{\bm}(I_Z) \to H^1_{\bm}(R) \to H^{1}_{\bm}(R/I_Z) \to H^2_{\bm}(I_Z) \to H^2_{\bm}(R) \to H^2_{\bm}(R/I_Z) \to H^{3}_{\bm}(I_Z) \to \cdots
\]
and recall we are trying to show that $H^i_{\bm}(I_Z) = 0$ for $i < \min\{3, 1+\codim_Z x\}$.  Since $R$ is \STwo, $H^1_{\bm}(R) = H^0_{\bm}(R) = 0$.  Since $Z \neq V(\bm)$ and $Z$ is reduced, $H^0_{\bm}(\O_Z) = 0$ and so $H^1_{\bm}(I_Z) = 0$.  Thus $\depth_{\bm} I_Z \geq 2$.  It is now sufficient to prove the case when $\codim_Z x \geq 2$.  This implies that $\dim X \geq 3$ (since $Z$ and $X$ have no common components).

Furthermore, we can assume that every component of $Z$ has dimension at least 2.  Indeed, suppose that $Z_1$ is an irreducible component of $Z$ such that $\dim Z_1 = 1$.  If $Z_2$ is the union of the other components of $Z$ and $Z_2 \neq \emptyset$, then $Z_1 \cap Z_2  = x$ (for dimension reasons and since we  working in a local ring).  But this implies that $x$ is an $F$-pure center since intersections of $F$-pure centers are unions of $F$-pure centers by \autoref{lem.FactsAboutSplittings}(i).  Thus we can assume that $Z_1 = Z$ is $1$-dimensional.  But then $\codim_Z x = 1$, which contradictions our assumption.

By \autoref{lem.LocalCohomologyCompatibleWithSplittings}, we know that $\Ann_{R}(H^2_{\bm}(R))$ is compatible with $(R, \sC) = (R, R\langle \phi \rangle)$.  However, if $H^2_{\bm}(R) \neq 0$, then since $R$ is \STwo{} and of dimension $\geq 3$, $\sqrt{\Ann_{R}(H^2_{\bm}(R))} = \bm$ by \autoref{lem.DimensionsOfLocalCohomology}.  But $\Ann_{R}(H^2_{\bm}(R))$ is radical (since $\sC$ is sharply $F$-pure) so $\Ann_{R}(H^2_{\bm}(R)) = \bm$.  But $V(\bm)$ is not an $F$-pure center, this is a contradiction.  We conclude that $H^2_{\bm}(R) = 0$.

Now we come to $H^1_{\bm}(R/I_Z)$.  Again, since $R/I_Z$ is reduced, $R/I_Z$ is S1.  Furthermore, since $Z$ has no 1-dimensional components we can apply \autoref{lem.DimensionsOfLocalCohomology} to conclude that $\Ann_R H^1_{\bm}(R/I_Z)$ can either be $\bm$-primary or $R$. Suppose it is $\bm$-primary.  Since $\phi|_Z$ is still a splitting, it follows that $\Ann_{R/I_Z} H^1_{\bm}(R/I_Z)$ is $\phi|_Z$-compatible and also radical and so equal to $\bm/I_Z$.  But then $\bm$ is $\phi$-compatible by basic facts about Frobenius splitting or by \autoref{lem.FactsAboutSplittings}(vi).  We conclude that $H^1_{\bm}(R/I_Z) = 0$.  This forces $H^2_{\bm}(I_Z)$ to be zero and completes the proof.
\end{proof}

\begin{remark}
\label{rem.KolThmInFSplitLanguage}
Another way to state a special case of \autoref{thm.KollarTheorem3(2)} using the language of Frobenius splittings is as follows:
\vskip 6pt
\noindent
{\it Suppose that $(R, \bm)$ is an \STwo{} local ring with Frobenius splitting $\phi : F^e_* R \to R$ which is not compatibly split with $\bm$.  Additionally suppose that $Z$ is any union of compatibly split subvarieties of $X = \Spec R$ such that no irreducible component of $Z$ coincides with an irreducible component of $X$.  Suppose that $I_Z \subseteq R$ is the ideal defining $Z$, then
\[
\depth_{\bm} I_Z \geq \min\{3, 1+\codim_Z x\}.
\]}
\end{remark}

Now we come to our main technical result on depth.  It is a characteristic $p > 0$ version of \cite[Theorem 3(1)]{KollarALocalKawamataViehweg} but also compare with \cite[Lemma 3.2]{AlexeevLimitsOfStablePairs}, \cite[Theorem 4.21]{FujinoIntroductionToTheLMMP}, \cite[Theorem 1.5]{AlexeevHaconNonRationalCenters} and \cite[Theorem 1.2, 1.5]{KovacsIrrationalCenters}.

\begin{theorem} {\rm(\cf \cite[Theorem 3(1)]{KollarALocalKawamataViehweg})}
\label{thm.KollarTheorem3(1)}
Suppose that $R$ is local, \STwo{} and \GOne{}, $X = \Spec R$ and that $0 \leq \Delta \in \RAlBDiv(X)$ is such that $(R, \Delta)$ is sharply $F$-pure.  Set $x \in X$ to be the closed point and assume that $x$ is not an $F$-pure center of $(R, \Delta)$.  Suppose that $0 \leq \Delta' \leq \Delta$ is another element of $\RAlBDiv(X)$ and that $r \Delta'$ is integral for some $r > 0$ relatively prime to $p$.  Further assume that $M$ is any rank-1 (along each component of $X$) reflexive coherent subsheaf of $K(X)$ such that $M^{(-r)} \cong \O_X( r\Delta')$ (here $\blank^{(\cdot)}$ denotes reflexive power).\footnote{Note that it is also common to use the notation $\blank^{[\cdot]}$.  We do not use that notation since it might be confused with Frobenius power.}  Then
%\todo{ \textbf{Question from Zsolt:} instead of $(p^e - 1)\Delta$ is integral you really wanted to write $(p^e - 1)\Delta'$ is integral? Or rather both?\\ {\bf Karl:}  You just need $\Delta'$ I think.}
\[
\depth_x M \geq \min\{3, \codim_X x\} = \min\{3, \dim R\}.
\]
\end{theorem}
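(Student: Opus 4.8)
The plan is to mimic the structure of the proof of \autoref{thm.KollarTheorem3(2)}, but now working with $M$ in place of $I_Z$ and using a Frobenius splitting adapted to the divisor $\Delta'$. First I would reduce to the case $r = p^e - 1$ by multiplying $r$ by a suitable integer, and use \autoref{lem.S2G1ImpliesFrobeniusSplitYieldsDivisor} together with \autoref{lem.SplittingNotCompatibleWithM} and \autoref{rem.ThereExistsANiceDivisor} to produce a splitting $\phi : F^e_* R \to R$ lying in $[\sC^{\Delta}]_e$ which is \emph{not} compatible with $\bm$ (this is possible precisely because $x$ is not an $F$-pure center of $(X, \Delta)$). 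Since $\Delta' \le \Delta$ we have $\sC^{\Delta} \subseteq \sC^{\Delta'}$, so $\phi \in [\sC^{\Delta'}]_e$ as well; explicitly, after replacing $e$ by a multiple, $\phi$ factors as $F^e_* R \hookrightarrow F^e_* R(\lceil (p^e-1)\Delta' \rceil) = F^e_* R((p^e-1)\Delta') \to R$.

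The analog of the "$F^e_*(\cdot c)$ is zero" trick is the following identification on local cohomology. Twisting $\phi$ by $M$, reflexifying, and using $M^{(-r)} \cong \O_X(r\Delta')$ with $r = p^e - 1$, one gets a map whose source, after tracing through reflexive powers, is $H^i_{\bm}(F^e_* (M^{(p^e)}((p^e-1)\Delta')))$; but $M^{(p^e)} \otimes \O_X((p^e-1)\Delta')$ reflexifies to $M^{(p^e)} \otimes M^{(-(p^e-1))} = M^{(1)} = M$, so this source is just $H^i_{\bm}(F^e_* M)$, and the composition
\[
H^i_{\bm}(M) \to H^i_{\bm}(F^e_* M) \xrightarrow{\ \phi\ } H^i_{\bm}(M)
\]
is a splitting of the natural Frobenius-type action, hence makes $H^i_{\bm}(M)$ a direct summand of $H^i_{\bm}(F^e_* M) \cong H^i_{\bm}(M)$ "twisted by $\phi$." This by itself does not kill $H^i_{\bm}(M)$; the point is instead that the annihilator $\Ann_R H^i_{\bm}(M)$ is $\phi$-compatible (the argument of \autoref{lem.LocalCohomologyCompatibleWithSplittings} applies verbatim, since it only uses that $\phi$ is a splitting and acts on \v{C}ech classes by raising entries to powers and multiplying by $\phi(F^e_* r)$), and is radical because $(R, R\langle\phi\rangle)$ is sharply $F$-pure. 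Now I invoke \autoref{lem.DimensionsOfLocalCohomology} applied to the module $M$, which is \STwo{} (being rank-one reflexive on an \STwo{} ring; $n = 2$, $d = \dim R$): for $i < \dim R$, either $\Ann_R H^i_{\bm}(M) = R$ (so the module vanishes) or $\dim V(\Ann_R H^i_{\bm}(M)) \le i - 2$. For $i = 1$ this forces vanishing, giving $\depth_x M \ge 2$. For $i = 2$, the annihilator is either $R$ or $\bm$-primary; if it is $\bm$-primary then, being radical, it equals $\bm$, and being $\phi$-compatible this says $\bm$ is $\phi$-compatible — contradicting that $x$ is not an $F$-pure center. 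Hence $H^2_{\bm}(M) = 0$ too, and $\depth_x M \ge \min\{3, \dim R\}$ as claimed.

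The main obstacle I expect is the bookkeeping in the second paragraph: verifying carefully that the reflexive-power identifications $\big(M^{(p^e)} \otimes \O_X((p^e-1)\Delta')\big)^{\vee\vee} \cong M$ hold globally (not just at codimension-one points), that the natural map $R((p^e-1)\Delta') \to R$ induced by $\phi$ tensors correctly with $M^{(p^e)}$ and reflexifies to the desired map $F^e_* M \to M$, and that all of this is compatible with the \GOne{} hypothesis so that the divisors involved genuinely live in $\AlBDiv(X)$ and $\lceil (p^e-1)\Delta'\rceil$ behaves as expected. Once the map $H^i_{\bm}(M) \to H^i_{\bm}(F^e_* M) \xrightarrow{\phi} H^i_{\bm}(M)$ is correctly set up and seen to make $\Ann_R H^i_{\bm}(M)$ a $\phi$-compatible radical ideal, the dimension/center argument via \autoref{lem.DimensionsOfLocalCohomology} is essentially the same as in \autoref{thm.KollarTheorem3(2)}. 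A minor additional point to handle is the degenerate cases $\dim R \le 2$, where the statement reduces to $\depth_x M \ge \dim R$, which follows immediately from $M$ being \STwo.
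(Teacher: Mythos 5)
Your proposal follows essentially the same route as the paper's proof: produce a splitting $\phi$ in $[\sC^{\Delta}]_e$ not compatible with $\bm$, observe that twisting by $M$ and reflexifying (using $M^{(-r)}\cong\O_X(r\Delta')$ with $r=p^e-1$ and $\Delta'\ge 0$) gives a split composition $H^i_{\bm}(M)\to H^i_{\bm}(F^e_*M)\to H^i_{\bm}(M)$, then combine $\phi$-compatibility and radicality of $\Ann_R H^2_{\bm}(M)$ with \autoref{lem.DimensionsOfLocalCohomology} to force vanishing. The "bookkeeping" you flag is exactly where the paper does its real work — namely noting that for a \v{C}ech class $z_j\in\Gamma(U,M)$, one has $z_j^{p^e}\in\Gamma(U,M^{(p^e)})\subseteq\Gamma(U,(\O_X((p^e-1)\Delta')\tensor M^{(p^e-1)}\otimes M)^{**})$ because $\Delta'\ge 0$, so that $\phi_M([F^e_*z^{p^e}])=[z]$ makes sense — and the paper also obtains radicality of the annihilator directly from the injectivity of $H^2_{\bm}(M)\to H^2_{\bm}(F^e_*M)$ rather than from sharp $F$-purity, but these are cosmetic differences; your argument is correct and matches the paper's.
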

\begin{proof}
First observe that it is harmless to assume that $\dim R \geq 3$ since otherwise the statement is trivial since $M$ is reflexive and thus \STwo{} by \cite[Theorem 1.9]{HartshorneGeneralizedDivisorsOnGorensteinSchemes}.
We may also assume that $M \subseteq \O_X$ is an ideal sheaf since we are working locally. We thus identify $M$ with an ideal of $R$ also denoted by $M$. %, in particular we have that $M$ is pure height $1$ in its primary decomposition.
Finally, by replacing $r$ by a power if necessary, we may assume that $r = p^e - 1$ for some $e > 0$.

Using \autoref{lem.SplittingNotCompatibleWithM}, we can find $\phi$ a splitting, not compatible with $\bm$, making the following composition an isomorphism:
\[
\O_X \to F^e_* \O_X \hookrightarrow F^e_* \O_X( (p^e -1)\Delta')\hookrightarrow  F^e_* \O_X( \lceil (p^e - 1)\Delta \rceil) \xrightarrow{\phi} \O_X.
\]
Twisting by $M$ and reflexifying (which we denote by $\blank^{**}$), we obtain
\[
\begin{array}{rcl}
&     & M\\
& \to & F^e_* (M^{(p^e - 1)} \otimes M)^{**}\\
& \hookrightarrow & F^e_* (\O_X((p^e - 1)\Delta') \tensor M^{(p^e - 1)} \otimes M)^{**} \\
& \hookrightarrow & F^e_* (\O_X(\lceil(p^e - 1)\Delta\rceil) \tensor M^{(p^e - 1)} \otimes M)^{**}\\
& \xrightarrow{\phi_M} & M.
\end{array}
\]
Using the fact that $(\O_X((p^e - 1)\Delta') \tensor M^{(p^e - 1)})^{**} \cong \O_X$, we have a composition
\[
M \to F^e_* M \to M
\]
that is an isomorphism (note the first map is not the usual inclusion of ideal sheaves via Frobenius).

%Consider the short exact sequence:
%\[
%0 \to M \to R \to R/M \to 0
%\]
%based on our identification of $M$ with an ideal.
Certainly $H^1_{\bm}(M) = 0$ since $M$ is reflexive and thus \STwo{} by \cite[Theorem 1.9]{HartshorneGeneralizedDivisorsOnGorensteinSchemes}.
We now study $H^2_{\bm}(M)$.  Since $M$ is \STwo, it follows that either $\Ann_R H^2_{\bm}(M)$ is equal to $R$ or it is $\bm$-primary by \autoref{lem.DimensionsOfLocalCohomology}.  Since we have an injection $H^2_{\bm}(M) \to H^2_{\bm}(F^e_* M)$, it follows that $\Ann_R H^2_{\bm}(M)$ is at the very least radical (since if $r^{p^e}$ kills $H^2_{\bm}(M)$, so does $r$).  In particular, if $\Ann_R H^2_{\bm}(M) \neq R$, then it must be $\bm$.

%Let's express $\O_X ((1-p^e)(D - \Delta')) = g \cdot \O_X$ (recall we assumed $D - \Delta' \geq 0$).  Given a \Cech{} class $y \in H^2_{\bm}(\O_X(-D))$, we have the corresponding class $gy \in H^2_{\bm}(\O_X((1-p^e)(D - \Delta')-D))$.

Fix $[z] \in H^2_{\bm}(M)$, and recall that we are considering $M$ as an ideal.  If $z_j \in \Gamma(U, M)$\footnote{Here $U= X \setminus \{x\}$, and local cohomology classes are treated as \Cech{} classes on $U$ as explained earlier.}, then since $\Delta' \geq 0$ we have
\[
z_j^{p^e} \in \Gamma(U, M^{(p^e)}) \subseteq \Gamma\big(U, (\O_X((p^e - 1)\Delta') \tensor M^{(p^e - 1)} \otimes M)^{**}\big).
\]
Thus we have a class
\[
[F^e_* z^{p^e}] \in H^2_{\bm}\big(F^e_* (\O_X((p^e - 1)\Delta') \tensor M^{(p^e - 1)} \otimes M)^{**}\big).
\]
Now, when we apply $\phi_M$ to this class, it is just applied component-wise.  Thus $\phi_M([F^e_* z^{p^e}]) = [z]$.  For any $r \in \bm$, it follows that $\phi_M((F^e_* r).[F^e_* z^{p^e}]) = \phi(F^e_* r).[z]$.  In particular, if an arbitrary $F^e_* r \in F^e_* \bm$ annihilates all classes $[y] \in H^2_{\bm}(F^e_* (\O_X((p^e - 1)\Delta') \tensor M^{(p^e - 1)} \otimes M)^{**}) \cong H^2_{\bm}(F^e_* M)$, then $\phi(F^e_* r)$ also annihilates all such $[z] \in H^2_{\bm}(M)$.

This proves that $\bm$ is $\phi$-compatible, a contradiction.
\end{proof}

\section{Applications}
\label{sec:application}

Here we list the most important corollaries of the results of \autoref{sec:depth}. The characteristic zero analog of many of them are already mentioned in \cite{KollarALocalKawamataViehweg}. We still state them here for the sake of completeness and we give a full proof of our main motivation, the compatibility of the relative canonical sheaf with base change. In Section \ref{sec:applications_lemmas} some lemmas are gathered while in Section \ref{sec:applications_corollaries} the promised corollaries are presented.

\subsection{Auxilliary results}
\label{sec:applications_lemmas}

In this section, we prove a series of lemmas culminating with a base change statement for relative canonical sheaves for families with sharply $F$-pure fibers \autoref{lem:base_change}.

\subsubsection{Basic lemmas on depth and relative canonical sheaves}

We begin with a short section where we make note of some simple results on depth and relative canonical sheaves that we will use.

%%%%%%%%%%%%%%%%%%%%%%%%%%%%%%%%%%%%%%%%%%%%%%%%%%%%%%%%%%%%%%%%%%%%%%%%%%%%%%%%%%%%
\begin{fact}
\label{fact:S_d_hyperplane}
\cite[Theorem 1.2.5]{BrunsHerzog}
Let $\sF$ be a coherent sheaf  on a Noetherian scheme $X$, $H$ a Cartier divisor on $X$ containing a point $P$, such that the local equation  of $H$ at $P$ is not a zero divisor of $\sF_P$ (in other words, it is a regular element for $\sF_p$). Then
\begin{enumerate}
 \item  \label{itm:S_d_hyperplane:first} $\depth \sF_P \geq d \ \Leftrightarrow \   \depth (\sF|_H)_P \geq d-1$,
 \item  \label{itm:S_d_hyperplane:second} $\depth \sF_P \geq \min \{  d, \dim \sF_P \} \ \Leftrightarrow \  \depth (\sF|_H)_P \geq \min \{ d-1, \dim (\sF|_H)_P \}$.
\end{enumerate}
\end{fact}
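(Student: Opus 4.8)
The plan is to reduce both equivalences to elementary statements about depth, dimension and non-zero-divisors over a Noetherian local ring, after which they become pure arithmetic. Since $X$ is Noetherian and $\sF$ is coherent, the first step is to pass to the stalk at $P$: put $R = \O_{X,P}$ with maximal ideal $\m$, let $M = \sF_P$ (a finitely generated $R$-module), and let $t \in R$ be the local equation of $H$ at $P$. Because $P \in H$ we have $t \in \m$, and by hypothesis $t$ is a non-zero-divisor on $M$. Under these identifications $(\sF|_H)_P = M/tM$; moreover, since $M/tM$ is killed by $t$, its depth and its dimension over $R$ agree with those over $\O_{H,P} = R/tR$ (a maximal $M/tM$-regular sequence in $\m$ reduces to one in $\m/(t)$ and conversely, and $\Supp_R(M/tM)$, $\Supp_{R/tR}(M/tM)$ have the same dimension). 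Thus it suffices to establish, over $R$, the two equalities $\depth_R(M/tM) = \depth_R M - 1$ and $\dim_R(M/tM) = \dim_R M - 1$, the degenerate case $\sF_P = 0$ being trivial (all quantities involved are $+\infty$ or $-\infty$).

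For the depth equality I would use the characterization $\depth_R N = \inf\{\, i \geq 0 : \Ext^i_R(R/\m, N) \neq 0 \,\}$. Applying $\Ext^\bullet_R(R/\m, -)$ to the short exact sequence $0 \to M \xrightarrow{t} M \to M/tM \to 0$ and using that $t$ annihilates $R/\m$, hence acts as zero on each $\Ext^i_R(R/\m, M)$, the long exact sequence breaks into short exact sequences $0 \to \Ext^i_R(R/\m, M) \to \Ext^i_R(R/\m, M/tM) \to \Ext^{i+1}_R(R/\m, M) \to 0$; hence the least $i$ with $\Ext^i_R(R/\m, M/tM) \neq 0$ is exactly one less than the least $i$ with $\Ext^i_R(R/\m, M) \neq 0$. (Equivalently, extend $t$ to a maximal $M$-regular sequence in $\m$ and pass to the quotient.) For the dimension equality, a non-zero-divisor on $M$ lies outside every associated prime of $M$, in particular outside every minimal prime of $\Supp_R M$; so $\Supp_R(M/tM) = \Supp_R M \cap V(t)$ meets each top-dimensional component of $\Supp_R M$ properly, and Krull's principal ideal theorem yields $\dim_R(M/tM) = \dim_R M - 1$.

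Granting these two equalities, write $a = \depth \sF_P$ and $n = \dim \sF_P$, so that $\depth(\sF|_H)_P = a - 1$ and $\dim(\sF|_H)_P = n - 1$. The first assertion is then the tautology $a \geq d \Leftrightarrow a - 1 \geq d - 1$, and the second follows from the elementary identity $\min\{d, n\} - 1 = \min\{d - 1, n - 1\}$, which converts $a \geq \min\{d, n\}$ into $a - 1 \geq \min\{d - 1, n - 1\}$ and back.

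I expect no serious obstacle here: this is essentially \cite[Theorem 1.2.5]{BrunsHerzog} (equivalently, Rees's depth-sensitivity theorem together with the dimension-drop lemma) repackaged in sheaf-theoretic terms, and the only thing needing care is the bookkeeping with the two-sided bound $\min\{d, \dim\}$ — that is, the \Sd{} condition. The non-zero-divisor hypothesis enters in two essential places, and not merely as "$t$ lies in some regular sequence": once for the exactness of $0 \to M \xrightarrow{t} M \to M/tM \to 0$ in the depth computation, and once for $\dim_R(M/tM) = \dim_R M - 1$, which rests on $t$ avoiding the minimal primes of $\Supp_R M$.
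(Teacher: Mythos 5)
The paper offers no proof of this statement; it is simply cited as \cite[Theorem 1.2.5]{BrunsHerzog}, so there is nothing in the paper to compare against. Your argument is correct and is the standard one underlying that reference: reduce to the local module $M = \sF_P$ over $R = \sO_{X,P}$ with $t \in \m$ a non-zero-divisor on $M$, deduce $\depth(M/tM) = \depth M - 1$ from the long exact Ext-sequence for $0 \to M \xrightarrow{t} M \to M/tM \to 0$ (using that $t$ kills $\Ext^i_R(R/\m,M)$), deduce $\dim(M/tM) = \dim M - 1$ because a non-zero-divisor avoids the minimal primes of $\Supp M$, and then both equivalences become the arithmetic identities $a \ge d \Leftrightarrow a-1 \ge d-1$ and $\min\{d,n\}-1 = \min\{d-1,n-1\}$. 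One point worth stating explicitly, which you handle correctly but could foreground: the cited \cite[Theorem 1.2.5]{BrunsHerzog} in its usual form gives only the depth-drop, i.e.\ part (a); the simultaneous dimension-drop needed for part (b) is a separate (equally standard) fact, and it is here, rather than in the depth computation, that the full strength of ``$t$ is a non-zero-divisor on $\sF_P$'' (as opposed to merely ``$t$ lies in a system of parameters'') is used.
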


\begin{lemma}
\label{lem:S_d}
If $f : X \to Y$ is a morphism of Noetherian schemes, $\sF \neq 0$ is a coherent sheaf on $X$ flat over $Y$, such that $\sF|_{X_y}$ is \Sd{} for every $y \in Y$ (i.e., $\sF$ is relatively \Sd{} over $Y$) and $\sG \neq 0$ is an \Sd{} coherent sheaf on $Y$, then $\sF \otimes f^* \sG$ is \Sd{} as well.
\end{lemma}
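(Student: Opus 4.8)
The plan is to reduce the statement to a purely local fact about depth and flatness, since being \Sd{} is checked stalk by stalk. So I would fix a point $x \in X$ with image $y = f(x) \in Y$ and set $A = \O_{Y,y}$ (with maximal ideal $\mathfrak m_y$), $B = \O_{X,x}$, $N = \sG_y$, $M = \sF_x$, $\bar B = B/\mathfrak m_y B$, $\bar M = M/\mathfrak m_y M$. Then $M$ is a nonzero finite $B$-module which is flat over $A$ (this is exactly flatness of $\sF$ over $Y$), $N$ is a nonzero finite $A$-module, the stalk $(\sF \otimes f^*\sG)_x$ is $M \otimes_A N$ as a $B$-module, and the stalk of the fiber sheaf $\sF|_{X_y}$ at $x$ is $\bar M$, a module over $\O_{X_y,x} = \bar B$. (It suffices to treat $x \in \Supp(\sF \otimes f^*\sG)$; for such $x$ both $M$ and $N$ are nonzero, hence so is $M \otimes_A N$.) Thus everything reduces to: if $N$ is \Sd{} over $A$ and $\bar M$ is \Sd{} over $\bar B$, then $M \otimes_A N$ is \Sd{} over $B$.

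The tool I would invoke is the pair of additivity formulas for a flat local homomorphism:
\[
\depth_B(M \otimes_A N) = \depth_A N + \depth_{\bar B} \bar M, \qquad \dim_B(M \otimes_A N) = \dim_A N + \dim_{\bar B} \bar M.
\]
Granting these, the two \Sd{} hypotheses say $\depth_A N \ge \min\{d, \dim_A N\}$ and $\depth_{\bar B} \bar M \ge \min\{d, \dim_{\bar B} \bar M\}$. Adding these inequalities, applying the elementary fact that $\min\{d,a\} + \min\{d,b\} \ge \min\{d, a+b\}$ for all $a, b \ge 0$, and then the dimension formula, I get $\depth_B(M \otimes_A N) \ge \min\{d, \dim_B(M \otimes_A N)\}$, which is precisely \Sd{} at $x$. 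Letting $x$ range over $X$ gives the lemma.

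The only step with real content is the depth additivity formula in the generality I need here — a finite module $M$ flat over the base ring $A$, tensored with an arbitrary finite $A$-module $N$. This is classical; one can cite it, or prove it directly by first using a maximal $A$-regular sequence in $\mathfrak m_y$ on $N$ (which remains $(M \otimes_A N)$-regular because $M$ is $A$-flat) to reduce to $\depth_A N = 0$, and then comparing $\depth_B(M \otimes_A N)$ with $\depth_{\bar B} \bar M$ via an embedding $A/\mathfrak m_y \hookrightarrow N$ and flatness once more; the dimension formula is the usual flat-local one extended from rings to modules. An equally valid route would be induction on $d$, cutting by the divisor of a carefully chosen element of $\mathfrak m_y$ via \autoref{fact:S_d_hyperplane}, but this retraces the same local computation, so I would just use the additivity formulas. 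I anticipate no genuine obstacle beyond invoking these standard commutative-algebra facts in the correct form.
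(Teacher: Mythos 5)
Your proof is correct and is essentially the same argument as in the paper: both reduce to the local depth and dimension additivity formulas for a flat base change (EGA IV, Prop.\ 6.3.1 and Cor.\ 6.1.2) and then combine the two \Sd{} hypotheses. The only difference is cosmetic — you package the final step as the single elementary inequality $\min\{d,a\}+\min\{d,b\}\geq\min\{d,a+b\}$, whereas the paper spells out a slightly longer chain of $\min$ manipulations.
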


\begin{proof}
Fix an arbitrary $P \in X$ and set $Q:=f(P)$ and $F:=X_{Q}$. Then
{\renewcommand{\arraystretch}{1.7}
\[
\begin{array}{rl}
& \depth_{\sO_{X,P}} (\sF \otimes f^* \sG)_{P}  \\
= & \underbrace{\depth_{\sO_{F,P}}\left( \sF|_F \right)_P \,\,+\,\, \depth_{\sO_{Y,Q}} \sG_{Q}}_{\textrm{$\sF$ is flat over $Y$ \& \cite[Proposition 6.3.1]{EGA_IV_II}}}
\\
\geq &
\underbrace{\min\big\{d, \dim_{\sO_{F,P}} \left(\sF|_F \right)_P\big\} + \depth_{\sO_{Y,Q}} \sG_{Q}}_{\textrm{$\sF|_F$ is \Sd}}
\\ = &
\min\big\{d + \depth_{\sO_{Y,Q}}  \sG_Q, \dim_{\sO_{F,P}} \left(\sF|_F \right)_P + \depth_{\sO_{Y,Q}}  \sG_Q\big\}
\\ \geq &
\underbrace{\min\big\{d + \depth_{\sO_{Y,Q}}  \sG_Q, \dim_{\sO_{F,P}} \left(\sF|_F \right)_P + \min\{d, \dim_{\sO_{Y,Q}}  \sG_Q\}\big\}}_{\textrm{$\sG$ is \Sd}}
\\ = &
\min\big\{d + \depth_{\sO_{Y,Q}}  \sG_Q,  \min\{\dim_{\sO_{F,P}} \left(\sF|_F \right)_P +d,\dim_{\sO_{F,P}} \left(\sF|_F \right)_P + \dim_{\sO_{Y,Q}}  \sG_Q\}\big\}
\\ = &
\underbrace{\min\big\{d + \depth_{\sO_{Y,Q}}  \sG_Q,  \min\{\dim_{\sO_{F,P}} \left(\sF|_F \right)_P +d,\dim_{\sO_{X,P}} (\sF \otimes f^* \sG)_P \}\big\}}_{\textrm{$\sF$ is flat over $Y$ and \cite[Corollaire 6.1.2]{EGA_IV_II}}}
\\ \geq &
\min\big\{d + \depth_{\sO_{Y,Q}}  \sG_Q,  \min\{d,\dim_{\sO_{X,P}} (\sF \otimes f^* \sG)_P \}\big\}
\\ \geq &
\min\big\{d , \dim_{\sO_{X,P}} (\sF \otimes f^* \sG)_P \big\} .
\end{array}
\]
}
\end{proof}

\begin{lemma}
\label{lem:Gorenstein_base}
If $f : X \to Y$ is a flat morphism of finite type to a Gorenstein scheme, then $\omega_{X/Y}^{\mydot}$ and $\omega_X^{\mydot}[-\dim Y]$ are locally isomorphic. In particular if $X$ is relatively Gorenstein over $Y$, then it is Gorenstein, and $\omega_{X/Y}$ and $\omega_X$ are locally isomorphic over $Y$.
\end{lemma}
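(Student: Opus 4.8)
The plan is to derive both statements from the standard behaviour of Grothendieck's twisted inverse image functor $f^{!}$. Recall (see \cite{HartshorneResidues}, \cite{ConradGDualityAndBaseChange}) that $\omega_{X/Y}^{\mydot}=f^{!}\O_Y$, that $f^{!}$ carries a dualizing complex on $Y$ to one on $X$ compatibly with the chosen normalizations, so that $\omega_X^{\mydot}\cong f^{!}\omega_Y^{\mydot}$ for a normalized dualizing complex $\omega_Y^{\mydot}$ on $Y$, and that $f^{!}$ satisfies the projection formula $f^{!}\O_Y\otimes^{\bL}_{\O_X}Lf^{*}N\cong f^{!}N$ whenever $N\in D_{\mathrm{qc}}(Y)$ has finite Tor-dimension. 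Since a dualizing complex is unique only up to a shift and a twist by an invertible sheaf, and both of these ambiguities vanish locally, the whole assertion may be checked locally on $Y$; note also that flatness of $f$ will only be used for the second half.

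For the first assertion I would use that $Y$ is Gorenstein to identify $\omega_Y^{\mydot}$, locally on $Y$, with $\O_Y[\dim Y]$ — a Gorenstein local ring of dimension $n$ being, up to the shift $[n]$, its own normalized dualizing complex (interpreting $\dim Y$ locally if $Y$ is not equidimensional; in the intended applications $Y$ is smooth, so the point does not arise). In particular $\omega_Y^{\mydot}$ is perfect, so the projection formula applies and gives
\[
\omega_X^{\mydot}\;\cong\;f^{!}\omega_Y^{\mydot}\;\cong\;f^{!}\O_Y\otimes^{\bL}_{\O_X}Lf^{*}\omega_Y^{\mydot}\;=\;\omega_{X/Y}^{\mydot}\otimes^{\bL}_{\O_X}Lf^{*}\omega_Y^{\mydot};
\]
locally on $Y$ the last factor is $\O_X[\dim Y]$, so $\omega_X^{\mydot}\cong\omega_{X/Y}^{\mydot}[\dim Y]$, i.e. $\omega_{X/Y}^{\mydot}\cong\omega_X^{\mydot}[-\dim Y]$, as wanted. (One can bypass the projection formula altogether by using only that $f^{!}$ is triangulated, hence commutes with shifts, and applying it to the local isomorphism $\omega_Y^{\mydot}\cong\O_Y[\dim Y]$.)

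For the second assertion (the \emph{in particular}), the additional ingredient is that a flat, finite-type morphism with Gorenstein fibres is a Gorenstein morphism, and that this is equivalent to $\omega_{X/Y}^{\mydot}=f^{!}\O_Y$ being, locally on $X$, an invertible sheaf concentrated in a single cohomological degree (namely $\omega_{X/Y}$ in degree $-r$, with $r$ the relative dimension); see \cite{ConradGDualityAndBaseChange}. Feeding this into the first part, $\omega_X^{\mydot}$ is then also, locally, an invertible sheaf in a single degree, which is precisely the statement that $X$ is Gorenstein. Finally, since $\omega_X^{\mydot}\cong\omega_{X/Y}^{\mydot}[\dim Y]$ holds over a neighbourhood of any given point of $Y$ on which $\omega_Y$ is trivial, taking the unique nonvanishing cohomology sheaf on either side yields $\omega_X\cong\omega_{X/Y}$ locally over $Y$.

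I expect the only real obstacle to be bookkeeping of normalizations: pinning the shift down to exactly $[\dim Y]$ requires both that $f^{!}$ respects the normalized dualizing complexes and that a $d$-dimensional Gorenstein ring contributes exactly $[d]$, and one must remember to pass to a neighbourhood on which $\omega_Y$ trivializes so as to kill the invertible-sheaf indeterminacy inherent in dualizing complexes (and, in the last sentence, to make $\omega_Y$ disappear). Beyond this, everything is a direct application of the Grothendieck duality package.
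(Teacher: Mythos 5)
Your proof is correct and follows essentially the same route as the paper's: the paper's one-line argument is $\omega_{X/Y}^{\mydot}=f^{!}\O_Y\cong f^{!}\omega_Y\cong f^{!}\omega_Y^{\mydot}[-\dim Y]\cong\omega_X^{\mydot}[-\dim Y]$ locally on $Y$, which is precisely the shift-only shortcut you mention in your parenthetical. Your more detailed discussion of the projection formula and the ``in particular'' clause is a harmless elaboration of the same idea.
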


\begin{proof}
 Locally on $Y$ the following isomorphisms hold.
\begin{equation*}
\omega_{X/Y}^{\mydot} = f^! \sO_Y \cong f^! \omega_Y \cong f^! \omega_Y^{\mydot}[- \dim Y] \cong \omega_{X}^{\mydot}[- \dim Y]
\end{equation*}

\end{proof}

Before continuing, let us remind ourselves of how $F$-adjunction works and how it can allow us to restrict divisors.

\subsubsection{Restricting divisors by $F$-adjunction: the $F$-different}
Suppose that $\Delta \geq 0$ is a $\bQ$-divisor on an \STwo{} and \GOne{} variety $X$ and that $(p^e - 1)(K_X + \Delta)$ is Cartier.  In fact, everything we say even holds more generally if $\Delta \geq 0$ is a $\bZ_{(p)}$-Weil divisorial sheaf, which is intuitively  something like a Weil divisor having components also in the singular locus\footnote{See \cite{KollarFlipsAndAbundance} and \cite{MillerSchwedeSLCvFP} for definitions, in the latter source these are called $\bZ_{(p)}$-$AC$-divisors.}.  Further suppose that $D$ is a reduced Cartier divisor on $X$ that is itself \STwo{} and \GOne{} and which has no common components with $\Delta$.  We now explain how we can construct a canonical $\bZ_{(p)}$-Weil divisorial sheaf (not necessarily a $\bZ_{(p)}$-Weil divisor) which we call $\Diff_{F,D} \Delta$ on $D$. Here the subindex $F$ means that this is the $F$-singularity counterpart of the usual different known from minimal model program theory. However, contrary to the usual different, the construction of $\
Diff_{F,D} \Delta$ goes through without any further assumption requiring that $\Delta$ is $\bQ$-Cartier
at certain points.

Without loss of generality we can assume that $X = \Spec R$ and that $R$ is a local ring and that $D = V(f)$.  The fact that the divisor \mbox{$(p^e - 1)(K_X + \Delta)$} is Cartier implies that \mbox{$\Hom_R(F^e_* R((p^e - 1)\Delta), R)$} is a free $F^e_* R$-module.  Choose a generator of this module $\phi$, which we can also view as an element of $\Hom_R(F^e_* R, R)$ since $\Delta$ is effective.  Now define a new map $\psi : F^e_* R \to R$ by the rule $\psi(F^e_* \blank) = \phi(F^e_* (f^{p^e - 1} \cdot \blank))$.  Certainly $\psi(F^e_* \langle f \rangle) = \phi(F^e_* \langle f^{p^e} \rangle) \subseteq \langle f \rangle$ and thus $\psi$ induces a map $\overline{\psi} \in \Hom_{R/\langle f \rangle}(F^e_* R/\langle f \rangle, R/\langle f \rangle)$.

Note that for every height one prime $\eta$ containing $f \in R$, \ie, a minimal associated prime of $D$, we have that $\Delta_{\eta} = 0$ (since $\Delta$ and $D$ have no common components).  Furthermore, $R_{\eta}$ is regular (since $R_{\eta}/\langle f \rangle$ is reduced and zero dimensional and hence regular).  It follows from inspection that $\overline{\psi}$ is non-zero at every such $\eta$.  By \cite[Theorem 2.4]{MillerSchwedeSLCvFP}, it follows that $\overline{\psi}$ induces an effective $\bZ_{(p)}$-Weil divisorial sheaf on $D$.
It is straightforward to verify that $\Delta|_D$ is independent of the choice of $e$ and $\phi$ and so:
\begin{definition}
We use $\Diff_{F,D} \Delta$ to denote the effective $\bZ_{(p)}$-Weil divisorial sheaf described above which coincides with $\overline{\psi}$.
\end{definition}
We also observe:
\begin{observation}[$F$-adjunction \cite{SchwedeFAdjunction}]
\label{obs.FAdjunction}
Notice now additionally that $\overline{\psi}$ (corresponding to $\Diff_{F,D} \Delta$) is surjective if and only if $\psi$ (corresponding to $\Delta + D$) is surjective.  In other words, $(X, \Delta + D)$ is sharply $F$-pure near $D$ if and only if $(D, \Diff_{F,D} \Delta)$ is sharply $F$-pure.
\end{observation}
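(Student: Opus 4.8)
The plan is to reduce the entire statement to the single comparison ``$\psi$ is surjective $\iff$ $\overline{\psi}$ is surjective'', and to observe that this comparison is a one-line Nakayama argument. Recall from the construction preceding the statement that when $(p^e-1)(K_X+\Delta')$ is Cartier and $\phi'$ generates $\Hom_R(F^e_*R((p^e-1)\Delta'),R)$ as an $F^e_*R$-module, the pair $(X,\Delta')$ is sharply $F$-pure at a point precisely when $\phi'$ is surjective there (this is the standard fact behind \cite{SchwedeFAdjunction}, using that the index of $K_X+\Delta'$ divides $p^e-1$ and hence is prime to $p$; surjectivity of a power of a generator is equivalent to surjectivity of the generator, since a composite of surjections is surjective and $(\phi')^{n}$ factors through $\phi'$). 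Apply this with $\Delta'=\Delta+D$: since $(p^e-1)(K_X+\Delta)$ and $(p^e-1)D$ are Cartier, so is $(p^e-1)(K_X+\Delta+D)$, and precomposition with $F^e_*(\cdot f^{p^e-1})$ is an $F^e_*R$-linear isomorphism $\Hom_R(F^e_*R((p^e-1)(\Delta+D)),R)\xrightarrow{\sim}\Hom_R(F^e_*R((p^e-1)\Delta),R)$ carrying $\psi$ to the generator $\phi$; hence ``$(X,\Delta+D)$ sharply $F$-pure near $D$'' translates to ``$\psi$ is surjective at every point of $D$''. Dually, the construction exhibits $\overline{\psi}$ as a generator of the corresponding free $\Hom$-module on $D$ (with $(p^e-1)(K_D+\Diff_{F,D}\Delta)$ Cartier), so ``$(D,\Diff_{F,D}\Delta)$ sharply $F$-pure'' translates to ``$\overline{\psi}$ is surjective''. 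Thus only the surjectivity comparison remains, and it suffices to treat it at a fixed point of $D$, where $R$ is local and $f\in\bm$.

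For the forward implication I would simply use the commutative square from the construction, whose horizontal arrows are $\psi$ and $\overline{\psi}$ and whose vertical arrows are the canonical surjections $F^e_*R\twoheadrightarrow F^e_*(R/\langle f\rangle)$ and $R\twoheadrightarrow R/\langle f\rangle$. Since $\overline{\psi}\bigl(F^e_*(R/\langle f\rangle)\bigr)$ is exactly the image of $\psi(F^e_*R)$ in $R/\langle f\rangle$, surjectivity of $\psi$ gives surjectivity of $\overline{\psi}$ at once.

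For the converse — the only step with real content — I would argue by Nakayama. The image $\ba:=\psi(F^e_*R)$ is an $R$-submodule of $R$, i.e.\ an ideal, and by the description above, surjectivity of $\overline{\psi}$ says precisely that $\ba+\langle f\rangle=R$. Because we are at a point of $D$ we have $\langle f\rangle\subseteq\bm\subsetneq R$, so if $\ba\subseteq\bm$ then $\ba+\langle f\rangle\subseteq\bm\neq R$, a contradiction; hence $\ba\not\subseteq\bm$, and an ideal of a local ring not contained in the maximal ideal is all of $R$. Therefore $\psi(F^e_*R)=R$, i.e.\ $\psi$ is surjective.

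I expect the \emph{main obstacle} to be purely organizational rather than mathematical: carefully matching the two sharp-$F$-purity conditions in the statement with surjectivity of the \emph{specific} generating maps $\psi$ and $\overline{\psi}$ (checking that $\psi$ really is a generator of $\Hom_R(F^e_*R((p^e-1)(\Delta+D)),R)$, that $\overline{\psi}$ is a generator of the analogous module on $D$ via adjunction, and that passing to powers in the definition of sharp $F$-purity is harmless), as well as keeping track of the localization implicit in ``near $D$''. Once the construction already carried out before the statement is granted, the argument collapses to the displayed Nakayama observation and the trivial quotient map.
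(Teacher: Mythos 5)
Your proposal is correct, and it supplies the natural argument behind the paper's observation (which the paper itself does not prove but rather cites to \cite{SchwedeFAdjunction}). The two-step reduction — first identifying sharp $F$-purity of $(X,\Delta+D)$, resp. $(D,\Diff_{F,D}\Delta)$, with surjectivity of the generating maps $\psi$, resp. $\overline{\psi}$, and then observing that the diagram makes the forward implication immediate while the converse is the local-ring fact that an ideal $\mathfrak{a}$ with $\mathfrak{a}+\langle f\rangle=R$ and $f\in\mathfrak{m}$ must equal $R$ — is exactly the content of the paper's ``Notice\dots in other words\dots'' phrasing, and your Nakayama step is the only piece with actual content. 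One small notational slip: precomposition with $F^e_*(\cdot f^{p^e-1})$ is an isomorphism in the \emph{opposite} direction, $\Hom_R(F^e_*R((p^e-1)\Delta),R)\xrightarrow{\ \sim\ }\Hom_R(F^e_*R((p^e-1)(\Delta+D)),R)$, carrying $\phi$ to $\psi$; this does not affect the argument, since it still shows $\psi$ is a generator of the latter module.
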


It will be useful for us to note that if $(D, \Diff_{F,D} \Delta)$ is sharply $F$-pure, then $\Diff_{F,D} \Delta$ is in fact an honest $\bZ_{(p)}$-Weil divisor and not just a $\bZ_{(p)}$-divisorial sheaf.  Suppose not, then the Weil divisorial sheaf $(p^e-1)(\Diff_{F,D} \Delta)$ must properly contain $\O_D$, even at some generic point of the non-normal locus. A contradiction can then be obtained from the fact that the conductor is already compatible with every $\phi \in \sC^R$ (this last fact follows from the argument of \cite[Proposition 1.2.5]{BrionKumarFrobeniusSplitting}).
%Explicitly, suppose $x \in D$ is a generic point of the non-normal locus of $D$ with corresponding prime $\eta \in \Spec R$. Suppose further that $(\Delta|_D)_{\eta}$ is not equal to zero, then $(p^e - 1)(\Delta|_D)_{\eta}$ is determined by some sheaf ${1 \over f} R_{\eta} \supseteq R_{\eta}$ for some $f \in R_{\eta}$ a non-unit.  In other words, there is a splitting of $R_{\eta} \to F^e_* R_{\eta} \hookrightarrow F^e_* ({1 \over f} R_{\eta})$.  Rephrasing this means that there is a splitting of $R_{\eta} \to F^e_* R_{\eta} \xrightarrow{\cdot F^e_* f} F^e_* R_{\eta}$.  But this is impossible since $\eta$ is already compatible with every $\phi \in [\sC^R]_e$.

The above introduced $F$-different $\Diff_{F,D} \Delta$ is equal to $\Delta|_D$ in most cases when the latter is defined.
\begin{lemma}
\label{lem:restriction_equals_different}
With the notation above, suppose additionally that $\Delta$ is $\bZ_{(p)}$-Cartier at all of the height-two primes primes of $R$ containing $f$ (the codimension 2 points of $X$ that are contained inside $D$).  Then $\Diff_{F,D} \Delta$ coincides with the restriction $\Delta|_D$ of $\Delta$ to $D$.
\end{lemma}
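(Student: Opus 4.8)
The plan is to compare the two effective $\bZ_{(p)}$-Weil divisorial sheaves $\Diff_{F,D}\Delta$ and $\Delta|_D$ on $D$ one codimension-$\le 1$ point at a time. Since $D$ is \STwo, a $\bZ_{(p)}$-Weil divisorial sheaf on it is determined by its localizations at points of codimension $\le 1$, so it is enough to check agreement there. At a generic point $\eta$ of $D$ both sheaves are trivial: $\Delta$ has no component through $\eta$ (it shares no component with $D$), so $(\Delta|_D)_\eta = 0$, while by construction the map $\overline{\psi}$ cutting out $\Diff_{F,D}\Delta$ is nonzero, hence a generator, at $\eta$. Thus I may assume $\eta = \bp$ is a codimension-one point of $D$, that is, a height-two prime of $R$ containing $f$, and localize so that $(R,\bm)$ is a $2$-dimensional \STwo{} (hence Cohen--Macaulay) local ring.

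In this local situation I would first settle the bookkeeping. Since $\Diff_{F,D}\Delta$ does not depend on $e$, enlarge $e$ so that $p^e-1$ is divisible both by $\ind(K_X+\Delta)$ and by the $\bZ_{(p)}$-Cartier index of $\Delta$ at $\bp$; then $(p^e-1)\Delta$ and $(p^e-1)(K_X+\Delta)$, hence also $(p^e-1)K_X$, are Cartier at $\bp$. As $\Delta \ge 0$ I may write $(p^e-1)\Delta = \divisor(g)$ with $g \in R$ a nonzerodivisor that is a unit at every height-one prime containing $f$ (no common components), so its image $\bar g$ in $R/fR$ is a nonzerodivisor and, by definition of the restriction of a Cartier divisor, $(p^e-1)(\Delta|_D) = \divisor_D(\bar g)$ at $\bp$.

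Next I would run the construction of $\Diff_{F,D}\Delta$ with a compatible choice of generator. Fix a generator $\Phi$ of the free rank-one $F^e_*R$-module $\Hom_R(F^e_*R,R)$. Then $\Hom_R(F^e_*R((p^e-1)\Delta),R)$ is freely generated by $\phi := F^e_*g\cdot\Phi$, so I may use this $\phi$; the associated map is $\psi(F^e_*\blank) = \phi(F^e_*(f^{p^e-1}\blank)) = \Phi(F^e_*(f^{p^e-1}g\,\blank)) = (F^e_*g\cdot\psi_0)(F^e_*\blank)$, where $\psi_0(F^e_*\blank):=\Phi(F^e_*(f^{p^e-1}\,\blank))$ is exactly the map defining $\Diff_{F,D}0$. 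Reducing modulo $f$ gives $\overline{\psi} = F^e_*\bar g\cdot\overline{\psi_0}$, and since pre-composing a test map $F^e_*(R/f)\to R/f$ with multiplication by $\bar g$ increases the divisor it determines, via \cite[Theorem 2.4]{MillerSchwedeSLCvFP} (\cf \cite[Proposition 2.9]{HartshorneGeneralizedDivisorsOnGorensteinSchemes}), by exactly $\divisor_D(\bar g)$, this yields $(p^e-1)\Diff_{F,D}\Delta = (p^e-1)\Diff_{F,D}0 + \divisor_D(\bar g)$ at $\bp$.

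It then remains to see that $\Diff_{F,D}0 = 0$, and this is where I expect the only genuine subtlety to lie: one must use crucially that $D$ is a \emph{Cartier} divisor. Adjunction $\omega_D \cong \omega_X(D)\otimes\O_D$, equivalently $F$-adjunction \cite{SchwedeFAdjunction}, identifies $\overline{\psi_0}$ with a generator of the free rank-one $F^e_*(R/f)$-module $\Hom_{R/f}(F^e_*(R/f),R/f)$, so the $F$-different of the zero divisor is trivial; alternatively one checks this at each codimension-$\le 1$ point of $D$ directly, using the Koszul resolution of $\O_D$ and the fact that $K_X$ is Cartier there --- by \GOne{} at a generic point of $D$ where $\Delta=0$, and by the added hypothesis at a $\bp$ as above. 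Plugging in, $(p^e-1)\Diff_{F,D}\Delta = \divisor_D(\bar g) = (p^e-1)(\Delta|_D)$ at $\bp$; since this holds at every codimension-$\le 1$ point of $D$, the two divisorial sheaves coincide. The main obstacle is thus twofold: the vanishing of the $F$-different of $0$ along a Cartier divisor, and the careful enlargement of $e$ making every line bundle and divisor in sight genuinely integral --- which is precisely where the hypothesis that $\Delta$ is $\bZ_{(p)}$-Cartier at the codimension-two points of $X$ lying on $D$ enters.
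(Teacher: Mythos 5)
Your proof is correct and follows essentially the same route as the paper's: localize at the height-two primes of $R$ containing $f$, use \GOne{} to get Gorenstein there, factor the generator of $\Hom_R(F^e_*R((p^e-1)\Delta),R)$ as $(F^e_*g)\cdot\Phi$ with $(p^e-1)\Delta=\divisor(g)$, and invoke the $F$-adjunction diagram of \cite[Proof of Proposition 7.2]{SchwedeFAdjunction} to see that the twist of $\Phi$ by $f^{p^e-1}$ restricts to a generator on $D$ (your ``$\Diff_{F,D}0=0$''). The only difference is bookkeeping: you are slightly more explicit about enlarging $e$ to kill both Cartier indices simultaneously and about the reduction to codimension $\le 1$ points, but the key claim and the citation used to justify it are identical to the paper's argument.
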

\begin{proof}At each of those codimension 2 points $\bq \in \Spec R$, $R_{\bq}$ is already Gorenstein (since $R/\langle f \rangle$ is \GOne{} and so $R_{\bq}/\langle f \rangle$ is Gorenstein).  It is enough to prove the result at each such $\bq$, so fix one such $\bq$.  Further choose $e > 0$ as above and also sufficiently divisible such that $(p^e - 1)D$ is Cartier at $\bq$.  We can thus write $(p^e - 1)\Delta = \Div_{\Spec R_{\bq}}(g)$ for some $g \in R_{\bq}$.

Since $R_{\bq}$ is Gorenstein, we can choose $\Phi \in \Hom_{R_{\bq}}(F^e_* R_{\bq}, R_{\bq})$ generating the set as an $F^e_* R_{\bq}$-module.  Consider the map $\Psi : F^e_* R_{\bq} \to R_{\bq}$ defined by the rule $\Psi(F^e_* \blank) = \Phi(F^e_* (f^{p^e - 1} \cdot \blank))$.  Certainly $\Psi$ restricts to a map $\overline{\Psi} \in \Hom_{R_{\bq}/\langle f \rangle}(F^e_* (R_{\bq}/\langle f \rangle), R_{\bq}/\langle f \rangle)$ as above.  Furthermore, $\overline{\Psi}$ generates the $F^e_* (R_{\bq}/\langle f \rangle)$-module $\Hom_{R_{\bq}/\langle f \rangle}(F^e_* (R_{\bq}/\langle f \rangle), R_{\bq}/\langle f \rangle)$ by the diagrams in \cite[Proof of Proposition 7.2]{SchwedeFAdjunction}.  It follows that $\psi = (F^e_* g)\cdot \Psi$ restricts to $\overline{\phi} = (F^e_* g) \cdot \overline{\Psi}$ and hence corresponds to the naive restriction $\Delta|_W$.  This proves the lemma.
\end{proof}

\subsubsection{The relative canonical sheaf}

We apply the above ideas on $F$-different to the following. It is the inductional step in the proof of \autoref{cor:relative_canonical_sheaf_base_change}.

\begin{lemma}
\label{lem:base_change}
Let $ f : X \to Y $ be a flat morphism of finite type with \STwo, \GOne{} equidimensional fibers to a  smooth variety\footnote{Variety here means a separated, integral scheme of finite type over an algebraically closed base-field.} $Y$  and $\Delta \in \QAlBDiv(X)$, such that $K_X + \Delta$ is $\bQ$-Cartier and $p \notdivide \ind(K_X + \Delta)$. Assume also that $Z \subseteq Y$ is a smooth Cartier divisor such that for $W:= X \times_Y Z$, $\Delta$ does not contain any component of $W$ and $(W,\Diff_{F, W} \Delta)$ is sharply $F$-pure.\footnote{The fact that $(W, \Diff_{F, W} \Delta)$ is sharply $F$-pure implies that $\Diff_{F, W} \Delta$ is a $\bZ_{(p)}$-divisor and not simply a $\bZ_{(p)}$-Weil divisorial sheaf.}  Then
% \begin{enumerate}[\normalfont (a)]
% \item \label{itm:base_change:F_pure} $(X,\Delta)$ is sharply $F$-pure in a neighborhood of $W$, and
%none of the $F$-pure centers of $(X, \Delta)$ are contained in $W$ and
%\item \label{itm:base_change:isomorphism}
$\omega_{X/Y}|_{W} \cong \omega_{W/Z}$.
%\end{enumerate}

\end{lemma}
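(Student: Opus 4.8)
The plan is to work locally on $Y$ and exploit the fact that $Z$ is a smooth Cartier divisor, so locally $Z = V(t)$ for a single regular parameter $t$ on the base that pulls back to a regular element $s := f^* t$ on $X$ (since $f$ is flat, $s$ is a nonzerodivisor on $X$ and cuts out $W$). First I would recall that, because $Y$ is smooth (hence Gorenstein) and $f$ is flat of finite type, \autoref{lem:Gorenstein_base} gives local isomorphisms $\omega_{X/Y}^{\mydot} \cong \omega_X^{\mydot}[-\dim Y]$ and similarly $\omega_{W/Z}^{\mydot} \cong \omega_W^{\mydot}[-\dim Z]$; moreover the fibers being \STwo{} and \GOne{} forces $\omega_{X/Y}$ and $\omega_{W/Z}$ to be (reflexive, rank one) divisorial sheaves, honest shifts of complexes into a single cohomological degree. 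So the statement is really about the canonical (divisorial) sheaves, and I want to identify $\omega_{X/Y}|_W$ with $\omega_{W/Z}$ as subsheaves of $K(W)$.

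The core input is adjunction for the Cartier divisor $W = V(s) \subseteq X$: since $s$ is a nonzerodivisor, there is a canonical isomorphism $\omega_{W/Z} \cong \big(\omega_{X/Y}(W)\big)\big|_W$, i.e. $\omega_{X/Y} \otimes \O_X(W) \otimes \O_W$ up to reflexification, which in turn is $\omega_{X/Y}|_W$ twisted by the conormal-type correction $\O_X(W)|_W$. But $\O_X(W) = \O_X(f^* Z) \cong f^* \O_Y(Z)$, and $\O_Y(Z)|_Z$ is trivial (a smooth Cartier divisor on a smooth variety has trivial normal-bundle contribution only after the adjunction shift — more precisely $\omega_Z \cong \omega_Y(Z)|_Z$, and the $\dim Y$ versus $\dim Z$ shift in \autoref{lem:Gorenstein_base} already accounts for exactly this). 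Chasing the relative dualizing complexes through $f^!$ and the base change $W = X \times_Y Z$ — using $\omega_{W/Z}^{\mydot} = (f_W)^! \O_Z$ and flat base change for $f^!$ along $Z \hookrightarrow Y$ — collapses the twist, yielding $\omega_{X/Y}|_W \cong \omega_{W/Z}$. The \STwo{} hypothesis on the fibers is what lets me pass freely between the reflexive sheaves and the restriction: $\omega_{X/Y}|_W$ need not a priori be reflexive on $W$, but relative \STwo{}-ness (via \autoref{fact:S_d_hyperplane}, cutting by the regular element $s$) guarantees $\omega_{X/Y}|_W$ is \STwo{} along $W$, hence equal to its reflexive hull, which is $\omega_{W/Z}$.

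Where does the divisor $\Delta$ and sharp $F$-purity of $(W, \Diff_{F,W}\Delta)$ enter? I expect it is \emph{not} needed for the sheaf-theoretic isomorphism $\omega_{X/Y}|_W \cong \omega_{W/Z}$ per se — that is pure Grothendieck duality plus flatness — but rather it is the standing hypothesis under which the $F$-different $\Diff_{F,W}\Delta$ is an honest $\bZ_{(p)}$-divisor (as the footnote to the statement notes), and it ensures $\Delta$ restricts sensibly and $W$ is \STwo{} and \GOne{} so that the above manipulations are legitimate; the condition $p \notdivide \ind(K_X+\Delta)$ guarantees the relevant Cartier algebra $\sC^\Delta$ is nonzero in the degrees we use. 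So the main obstacle, and the step I would be most careful about, is the compatibility of $f^!$ with the base change $Z \hookrightarrow Y$ together with correctly tracking the cohomological shifts: one must verify that $(f_W)^! \O_Z \cong \big(f^! \O_Y\big)\big|_W$ (no extra twist, no extra shift) using that $Z$ is a Cartier divisor cut out by a regular element and $f$ is flat — this is where sign/shift errors or a spurious normal-bundle factor would creep in. Everything else is a routine reflexivity-and-depth check enabled by the fiberwise \STwo{} assumption via \autoref{fact:S_d_hyperplane} and \autoref{lem:S_d}.
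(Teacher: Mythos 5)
Your proposal has a genuine gap, and it is precisely at the spot you flagged as uncertain. Pure Grothendieck duality only gets you halfway: the compatibility of $f^!$ with restriction to $W$ gives an isomorphism at the level of \emph{dualizing complexes}, $\omega_{W/Z}^{\mydot} \cong L i^* \omega_{X/Y}^{\mydot}$ (the normal-bundle twists do cancel as you expect). But passing to the bottom cohomology sheaf does not give an isomorphism $\omega_{X/Y}|_W \cong \omega_{W/Z}$ in general. The triangle $\omega_{X/Y}^{\mydot} \xrightarrow{\cdot s} \omega_{X/Y}^{\mydot} \to \omega_{W/Z}^{\mydot}$ yields a short exact sequence
\[
0 \to \omega_{X/Y}|_W \to \omega_{W/Z} \to (\text{$s$-torsion of } \sH^{-d+1}\omega_{X/Y}^{\mydot}) \to 0,
\]
so you only get an \emph{injection}. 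When the fibers are merely \STwo, \GOne{} but not Cohen--Macaulay, $\sH^{-d+1}\omega_{X/Y}^{\mydot}$ is typically nonzero and the cokernel need not vanish --- this is exactly the base-change failure for non-CM fibers alluded to in the introduction.

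The missing ingredient is a depth bound, and this is where sharp $F$-purity is essential, not cosmetic. To conclude the isomorphism one shows $\omega_{X/Y}|_W$ is \STwo{} and then invokes \cite[Theorems 1.9, 1.12]{HartshorneGeneralizedDivisorsOnGorensteinSchemes} (two \STwo{} sheaves agreeing in codimension one are isomorphic). By \autoref{fact:S_d_hyperplane}, to get $\omega_{X/Y}|_W$ \STwo{} you need $\depth_x \omega_{X/Y} \geq \min\{3, \codim_X x\}$ at every $x \in W$, and since $Y$ is Gorenstein, $\omega_{X/Y}$ is locally $\omega_X$ (\autoref{lem:Gorenstein_base}), so this is a condition on $\omega_X$. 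That bound is exactly the content of \autoref{thm.KollarTheorem3(1)}, and applying it requires knowing that no point $x \in W$ is an $F$-pure center of $(X,\Delta)$. The paper proves this as a separate claim --- no $F$-pure center of $(X,\Delta)$ is contained in $W$ --- by an $F$-adjunction argument using that $(X, \Delta + W)$ is sharply $F$-pure near $W$ (\autoref{obs.FAdjunction}). Your reading of the $F$-purity hypothesis as merely guaranteeing that $\Diff_{F,W}\Delta$ is a divisor and that "$\sC^\Delta$ is nonzero in the right degrees" misses that it is the engine behind the entire depth estimate; without it the statement is false.
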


\begin{proof}
By \autoref{lem:S_d}, both  $X$ and $W$ are  \STwo. Similarly, both are  \GOne{} by \autoref{lem:Gorenstein_base}.
By $F$-adjunction \autoref{obs.FAdjunction}, \cf \cite[Main Theorem, Proposition 7.2, Remark 7.3]{SchwedeFAdjunction}, $(X, \Delta + W)$ is sharply $F$-pure in a neighborhood of $W$. Hence, so is $(X,\Delta)$.
%This shows point \eqref{itm:base_change:F_pure}.
We now claim:
\begin{claim}
No $F$-pure center of $(X, \Delta)$ is contained in $W$.
\end{claim}
\begin{proof}[Proof of claim]
Suppose that $Z \subseteq X$ was an $F$-pure center of $(X, \Delta)$ contained in $W$.  Let $\eta$ denote the generic point of $Z$ and now we work in $R = \O_{X,\eta}$ with maximal ideal $\bm$ corresponding to $\eta$.  For any element $\phi : F^e_* \O_{X,\eta} \subseteq F^e_* \O_{X,\eta}(\lceil (p^e - 1)\Delta\rceil) \to \O_{X,\eta}$ of $[\sC^{\Delta}]_e$, we notice that $\phi(F^e_* \bm) \subseteq \bm$ since $Z$ is an $F$-pure center.  Choose $f \in \bm$ to be the defining equation of the Cartier divisor $W$ in $\O_{X,\eta}$.  It follows from construction that $(F^e_* f^{p^e-1}) \cdot [\sC^{\Delta}]_e = [\sC^{\Delta + W}]_e$.  In other words, for any $\psi \in [\sC^{\Delta+W}]_e$, we can write $\psi(F^e_* \blank) = \phi(F^e_* (f^{p^e-1} \cdot \blank))$ for some $\phi \in [\sC^{\Delta}]_e$.  With this notation, for any $r \in R$ we have $\psi(F^e_* r) = \phi(F^e_* (f^{p^e-1}r)) \in \phi(F^e_* \bm) \subseteq \bm$.  This proves that $(X, \Delta+W)$ is not sharply $F$-pure at $\eta$, the generic point of $Z$.
But we assumed that $(X, \
\Delta+W)$
was sharply $F$-pure, a contradiction which proves the claim.
\end{proof}
We return to the proof of \autoref{lem:base_change}.  By  \autoref{thm.KollarTheorem3(1)} then, for every $x \in W$,
\[
\depth_x \omega_{X} \geq \min\{3, \codim_X x\} = \min\{3, \dim \omega_{X,x}\}.
\]
However, by \autoref{lem:Gorenstein_base}, $\omega_X$ and $\omega_{X/Y}$ are isomorphic locally, and then in the above inequality $\omega_X$ can be replaced by $\omega_{X/Y}$. Then by \autoref{fact:S_d_hyperplane}, $\omega_{X/Y}|_W$ is \STwo. To be precise, to apply \autoref{fact:S_d_hyperplane}, one needs to prove a priori that the local  equation of $W$ is not a zero-divisor of $\omega_{X/Y}$. For this it is enough to show that  $\omega_{X/Y}$ is \SOne, which follows using again that locally $\omega_{X/Y}$ and $\omega_X$ are isomorphic and that $\omega_X$ is \STwo{} by \cite[Corollary 5.69]{KollarMori}. Therefore $\omega_{X/Y}|_W$ is \STwo{} indeed. However, so is $\omega_{W/Z}$ by using \cite[Corollary 5.69]{KollarMori} again. Furthermore, $\omega_{X/Y}|_W$ and $\omega_{W/Z}$ are isomorphic  on the relative Gorenstein locus, since the relative canonical sheaf is compatible with base-change for Gorenstein morphisms \cite[Theorem 3.6.1]{ConradGDualityAndBaseChange}.
Therefore \cite[Theorems 1.9 and 1.12]{HartshorneGeneralizedDivisorsOnGorensteinSchemes} yields the statement of the lemma.
\end{proof}

The next lemma is used  in \autoref{cor:relative_canonical_sheaf_base_change_projective}. It allows us to cite \cite{KollarHullsAndHusks}.

\begin{lemma}
\label{lem:relative_canonical_reflexive}
Suppose that $Y$ is a scheme of finite type over an algebraically closed field.  If $f : X \to Y$ is a projective, flat, relatively \STwo{} and $G_1$, equidimensional morphism, then $\omega_{X/Y}$ is reflexive.
\end{lemma}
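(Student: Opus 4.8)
The plan is to recover $\omega_{X/Y}$ as the pushforward of a line bundle from the relative Gorenstein locus of $f$, which will be a big open subset of $X$, and then to read off reflexivity from the criteria of \cite[Theorems 1.9 and 1.12]{HartshorneGeneralizedDivisorsOnGorensteinSchemes}. Write $n$ for the relative dimension of $f$ and recall that $\omega_{X/Y}^{\mydot}=f^{!}\O_{Y}$ and $\omega_{X/Y}=\myH^{-n}(\omega_{X/Y}^{\mydot})$. Since reflexivity is local on $X$, I may assume $Y$ is affine.

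First I would let $U\subseteq X$ denote the locus over which $f$ is Gorenstein --- equivalently, the largest open on which $\omega_{X/Y}^{\mydot}$ is an invertible sheaf placed in degree $-n$ --- so that $\sL:=\omega_{X/Y}|_{U}$ is a line bundle. Since every fiber $X_{y}$ is \GOne{}, the non-Gorenstein locus $X_{y}\setminus U$ has codimension $\ge 2$ in $X_{y}$. As $f$ is flat with equidimensional fibers, the dimension formula $\dim\O_{X,x}=\dim\O_{Y,f(x)}+\dim\O_{X_{f(x)},x}$ of \cite[Corollaire 6.1.2]{EGA_IV_II}, applied both to $X$ and to the closed set $Z:=X\setminus U$, shows that $Z$ has codimension $\ge 2$ in $X$; in particular $U$ contains every point of $X$ of codimension $\le 1$.

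Next I would prove that the canonical map $\omega_{X/Y}\to j_{*}(\omega_{X/Y}|_{U})=j_{*}\sL$, where $j\colon U\hookrightarrow X$, is an isomorphism, equivalently that $\sH^{0}_{Z}(\omega_{X/Y})=\sH^{1}_{Z}(\omega_{X/Y})=0$, i.e.\ that $\omega_{X/Y}$ has depth $\ge 2$ along $Z$. When the fibers are Cohen--Macaulay this is routine: by \cite[Theorem 3.6.1]{ConradGDualityAndBaseChange} the sheaf $\omega_{X/Y}$ is then flat over $Y$ with $\omega_{X/Y}\otimes k(y)\cong\omega_{X_{y}}$, each $\omega_{X_{y}}$ is \STwo{} by \cite[Corollary 5.69]{KollarMori}, and since $Z\cap X_{y}$ has codimension $\ge 2$ in $X_{y}$ the estimate follows from \cite[Proposition 6.3.1]{EGA_IV_II} exactly as in the proof of \autoref{lem:S_d}. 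In general one must argue through the relative dualizing complex itself: by base change for $f^{!}$, the derived restriction of $\omega_{X/Y}^{\mydot}$ to any fiber $X_{y}$ is $\omega_{X_{y}}^{\mydot}$, whose top cohomology $\omega_{X_{y}}$ is \STwo{} and whose lower cohomology sheaves are supported in codimension $\ge 3$ by \autoref{lem.DimensionsOfLocalCohomology}; one then uses the projectivity of $f$ to control the spectral sequence comparing the cohomology of that derived restriction with $\myH^{-n}(\omega_{X/Y}^{\mydot})\otimes k(y)=\omega_{X/Y}\otimes k(y)$, and thereby bounds the depth of $\omega_{X/Y}$ along $Z$ from below by $2$.

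Finally, granting $\omega_{X/Y}\cong j_{*}\sL$, this sheaf agrees on the codimension $\le 1$ points with the invertible --- hence reflexive --- sheaf $\sL$ and, by the previous step, has no local cohomology along $Z$ in degrees $\le 1$; moreover the associated points of $\O_{X}$ all lie in $U$ (they are the fiberwise-minimal points over associated points of $Y$, where $f$ is Gorenstein since the fibers are \GOne{}), so $\sH^{0}_{Z}(\O_{X})=0$ and hence $\sH^{0}_{Z}$ of any reflexive sheaf vanishes. A short diagram chase with the exact sequence $0\to\omega_{X/Y}\to\omega_{X/Y}^{**}\to Q\to 0$, using that $Q$ is supported on $Z$ together with $\sH^{0}_{Z}(\omega_{X/Y})=\sH^{1}_{Z}(\omega_{X/Y})=0=\sH^{0}_{Z}(\omega_{X/Y}^{**})$, then forces $Q=0$, i.e.\ $\omega_{X/Y}\to\omega_{X/Y}^{**}$ is an isomorphism --- this is essentially the content of \cite[Theorems 1.9 and 1.12]{HartshorneGeneralizedDivisorsOnGorensteinSchemes}. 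I expect the middle step to be the genuine obstacle: once $Y$ is an arbitrary finite-type scheme and the fibers are merely \STwo{} and \GOne{} --- so possibly not Cohen--Macaulay as soon as $n\ge 3$ --- the sheaf $\omega_{X/Y}$ need not be flat over $Y$, the naive base-change argument is unavailable, and one has to exploit the projective structure and a careful analysis of $f^{!}\O_{Y}$ to obtain the depth estimate along $Z$.
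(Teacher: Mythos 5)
You correctly reduce the statement to exhibiting an open $U$ in the relative Gorenstein locus with $\codim_{X_y}(X\setminus U)_y\geq 2$ and then showing that $\omega_{X/Y}\to j_*(\omega_{X/Y}|_U)$ is an isomorphism; and you correctly identify that the genuine obstacle is the second step, i.e.\ the depth-$\geq 2$ estimate for $\omega_{X/Y}$ along $Z=X\setminus U$. But you then only gesture at this step --- ``one uses the projectivity of $f$ to control the spectral sequence'' --- without actually carrying it out, and your concluding paragraph acknowledges as much. This is a genuine gap, not merely a suppressed routine verification: because $\omega_{X/Y}$ is not known to be flat over $Y$ in advance (flatness of $\omega_{X/Y}$ is precisely one of the outputs of \autoref{cor:relative_canonical_sheaf_base_change}, not an input here), there is no simple fiberwise argument available, and the spectral sequence comparing $L i_y^*\omega_{X/Y}^{\mydot}$ to $\omega_{X/Y}\otimes k(y)$ has genuinely nonvanishing higher $\Tor$ terms that your sketch does not control. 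The reduction to the CM case is not available once $n\geq 3$, as you note.

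The paper sidesteps the direct depth estimate entirely. After shrinking $Y$ it fixes a finite surjection $\pi\colon X\to \bP^n_Y$ over $Y$ and takes $Z:=\pi^{-1}(\pi(W))$, $U:=X\setminus Z$ (rather than the full relative Gorenstein locus), so that $\pi|_U\colon U\to V:=\bP^n_Y\setminus\pi(W)$ is a finite morphism of open $Y$-schemes. It then checks $\pi_*\omega_{X/Y}\cong\pi_* j_*(\omega_{X/Y}|_U)$ by a chain of identifications: push forward by $\pi$, apply duality for the finite morphism to write both sides as $\sHom(\pi_*\O_X,-)$ applied to $q_*\omega_{V/Y}$ and $\omega_{\bP^n_Y/Y}$ respectively, and then use that $\omega_{\bP^n_Y/Y}$ is a line bundle --- in particular flat and relatively \STwo{} --- so $q_*\omega_{V/Y}\cong\omega_{\bP^n_Y/Y}$ by \cite[Proposition~3.5]{HassettKovacsReflexivePullbacks}. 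Since $\pi$ is finite, this upgrades to the desired isomorphism on $X$. Thus the depth computation that blocks your proposal is transferred by Grothendieck duality to $\bP^n_Y$, where it is trivial. This is precisely where projectivity of $f$ enters --- via Noether normalization over $Y$ --- whereas your sketch invokes projectivity only abstractly. If you want to complete a proof along your lines, you would need either to supply the missing spectral-sequence analysis in full, or to adopt the projection trick; the latter is both shorter and more robust.
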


\begin{proof}
According to \cite[Corollary 3.7]{HassettKovacsReflexivePullbacks}, it is enough to exhibit an open set $U$ contained in the relative Gorenstein locus, such that
\begin{enumerate}
\item for $Z:= X \setminus U$, $\codim_{X_y} Z_y \geq 2$ for every $y \in Y$ and
\item for the inclusion of open set $j : U \hookrightarrow X$, the natural homomorphism $\omega_{X/Y} \to j_* (\omega_{X/Y}|_U)$ is an isomorphism.
\end{enumerate}
Let $W$ be the non-relatively Gorenstein locus. Fix a finite surjective morphism  $\pi : X \to \bP^n_Y$ over $Y$, after possibly shrinking $Y$ (\cf \cite[proof of Corollary 24]{KollarALocalKawamataViehweg}). Set then $Z:= \pi^{-1} (\pi(W))$, $V:=\bP^n_Y \setminus \pi(W)$. Let $q : V \to \bP^n_Y$ be the natural inclusion. With the above choices, $\codim_{X_y} Z_y \geq 2$ is satisfied for all $y \in Y$. For the other condition, notice that it is enough to prove that the natural homomorphism $\pi_* \omega_{X/Y} \to \pi_* j_* (\omega_{X/Y}|_U)$ is an isomorphism. However
\begin{multline*}
 \pi_* j_* (\omega_{X/Y}|_U) \cong q_* ((\pi_* \omega_{X/Y})|_V)
\cong \underbrace{  q_* \sHom_{V}( (\pi_* \sO_X)|_V, \omega_{V/Y})}_{\textrm{Grothendieck duality}}
\\ \cong \underbrace{ \sHom_{X}( \pi_* \sO_X, q_* \omega_{V/Y})}_{\textrm{adjoint functors}}
 \cong \underbrace{ \sHom_{X}( \pi_* \sO_X, \omega_{\bP^n_Y/Y})}_{\parbox{120pt}{\tiny \cite[Proposition 3.5]{HassettKovacsReflexivePullbacks} using that $\omega_{\bP^n_Y/Y}$ is flat and relatively $S_2$}}
\cong \pi_* \omega_{X/Y},
\end{multline*}
and the composition of the above isomorphisms is the natural homomorphism $\pi_* \omega_{X/Y} \to \pi_* j_* (\omega_{X/Y}|_U)$.
\end{proof}

% \begin{lemma}
% \label{lem:non_compatible_closed}
% Let $f : X \to Y$ be a flat morphism of finite type with \STwo, \GOne{} equidimensional fibers to a  smooth variety. Then, the set
% \begin{equation*}
% \{ y \in Y | \textrm{\omega_{X/Y}|_{X_y} }
% \end{equation*}
%
% \end{lemma}
%
% \begin{proof}
%
% \end{proof}

%\todo{{ \bf Zsolt:} My train of thought was with the relative \STwo{}, that it implies by \cite{HassettKovacsReflexivePullbacks} the vanishings of the zeroth and first local %cohomologies at $\pi(Z)$. I'm thinking about a good reference, or a short way to say it...\\
%{\bf Karl:} Hm, what about the following?  Certainly $\pi_* \omega_{X/Y} \cong \sHom_{\O_{\bP^n_Y}}(\pi_* \O_{X}, \omega_{\bP^n_Y/Y})$.  Thus it follows that $\pi_* \omega_{X/Y}$ is $\O_{\bP^n_Y}$-reflexive.  Thus using the notation you gave above, $q_* ((\pi_* \omega_{X/Y})|_V) = \pi_* \omega_{X/Y}$ by \cite[Proposition 3.6.1]{HassettKovacsReflexivePullbacks}.  But certainly $q_* ((\pi_* \omega_{X/Y})|_V) = \pi_* j_* (\omega_{X/Y}|_U)$.  Does this work?
%\\{\bf Zsolt:} To me, your argument seems to be working as well. Anyways, I think (by adding one more step to the isomorphisms) mine works as well. Are you ok with the Lemma now?}

\subsection{Consequences}
\label{sec:applications_corollaries}

\ \\[10pt]
We begin with a simple consequence on the depth of $\O_X$ and $\omega_X$.

\begin{corollary} {\rm (\cf \cite[Lemma 3.2]{AlexeevLimitsOfStablePairs}, \cite[Theorem 4.21]{FujinoIntroductionToTheLMMP}, \cite[4.1, 4.2, 4.3]{KollarALocalKawamataViehweg}, \cite[Theorem 1.5]{AlexeevHaconNonRationalCenters}, \cite[Theorem 1.2, 1.5]{KovacsIrrationalCenters})}
%\begin{itemize}
%\item[(a)]
%If $(X,\Delta)$ is a strongly $F$-regular, then $\omega_X$  is Cohen-Macaulay.
%\item[(b)]
Suppose that $X = \Spec R$ is \STwo{} and \GOne{}.
If $X$ is $F$-pure and $x \in X$ is not an $F$-pure center of $X$, then
\[
\depth_x \O_X \geq \{3, \codim_X x \} \text{ and } \depth_x \omega_X \geq \min\{3, \codim_X x\}.
\]
%\end{itemize}
\end{corollary}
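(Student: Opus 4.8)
The plan is to deduce both depth bounds from \autoref{thm.KollarTheorem3(1)} by choosing the auxiliary data appropriately. First I would invoke \autoref{lem.S2G1ImpliesFrobeniusSplitYieldsDivisor}: since $R$ is \STwo, \GOne{} and $F$-pure, there is an effective $\bQ$-divisor $\Delta \in \QAlBDiv(X)$ with $(p^e-1)(K_X+\Delta)$ being $\bQ$-Cartier (so in particular $p \notdivide \ind(K_X + \Delta)$) and $(X,\Delta)$ sharply $F$-pure. I must check that with this choice of $\Delta$ the closed point $x$ is still \emph{not} an $F$-pure center of $(X,\Delta)$; this should follow because $\sC^{\Delta} \subseteq \sC^R$, and by \autoref{lem.FactsAboutSplittings}(iii) being $\sC^R$-compatible implies being $\sC^{\Delta}$-compatible --- wait, the implication I need goes the other way, so instead I would build $\Delta$ from a splitting $\phi$ that is not $\bm$-compatible (using \autoref{lem.SplittingNotCompatibleWithM} together with \autoref{rem.ThereExistsANiceDivisor}), which directly yields a $\Delta$ with $(X,\Delta)$ sharply $F$-pure, $(p^e-1)(K_X+\Delta)$ Cartier, and $x$ not an $F$-pure center of $(X,\Delta)$. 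That remark is exactly tailored to this situation.

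Next, with this $\Delta$ fixed, I would apply \autoref{thm.KollarTheorem3(1)} twice, each time taking $\Delta' = 0$ (so that the hypotheses ``$0 \leq \Delta' \leq \Delta$'' and ``$r\Delta'$ integral'' hold trivially for any $r > 0$ coprime to $p$, and the condition $M^{(-r)} \cong \O_X(r\Delta') = \O_X$ becomes $M^{(-r)} \cong \O_X$). For the first statement I take $M = \O_X$, so $M^{(-r)} = \O_X$ holds with $r = 1$, and \autoref{thm.KollarTheorem3(1)} gives $\depth_x \O_X \geq \min\{3,\codim_X x\}$. For the second statement I take $M = \omega_X$, which is a rank-one reflexive subsheaf of $K(X)$ (here I use that $R$ is \STwo{} and \GOne, so $\omega_X$ corresponds to a Weil divisorial sheaf, and it is reflexive by \cite[Theorem 1.9]{HartshorneGeneralizedDivisorsOnGorensteinSchemes}). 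The remaining point is that $\omega_X^{(-r)} \cong \O_X$ for a suitable $r$ coprime to $p$: since $(p^e-1)(K_X+\Delta) \sim 0$ with $\Delta \geq 0$ effective, one has $\omega_X^{(p^e - 1)} \cong \O_X(-(p^e-1)\Delta) \hookrightarrow \O_X$, which is not quite an isomorphism unless $\Delta = 0$. To fix this I would instead run \autoref{thm.KollarTheorem3(1)} with $M = \omega_X$, $\Delta' = \Delta$, and $r = p^e - 1$: then $r\Delta' = (p^e-1)\Delta$ is integral, $0 \leq \Delta' \leq \Delta$, and $M^{(-r)} = \omega_X^{(-(p^e-1))} \cong \O_X((p^e-1)K_X)^{(-1)} $; using $(p^e-1)(K_X + \Delta) \sim 0$ we get $(p^e-1)K_X \sim -(p^e-1)\Delta$, hence $\omega_X^{(-(p^e-1))} \cong \O_X((p^e-1)\Delta) = \O_X(r\Delta')$, exactly as required. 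Similarly for $\O_X$ one can just take $M = \O_X$, $\Delta' = 0$, $r = 1$.

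Finally, I note $\codim_X x = \dim R$ since $x$ is the closed point of the local ring $R$, matching the display in \autoref{thm.KollarTheorem3(1)}. The main obstacle I anticipate is purely bookkeeping: making sure the divisor $\Delta$ produced by \autoref{rem.ThereExistsANiceDivisor} is simultaneously usable for \emph{both} $M = \O_X$ and $M = \omega_X$ --- i.e. that one fixed $\Delta$ (with $x$ not an $F$-pure center) works in both applications --- and that the reflexive-power identifications $M^{(-r)} \cong \O_X(r\Delta')$ are set up with matching $r$ and $e$ (which one arranges by replacing $e$, hence $r = p^e-1$, by a multiple as in the proof of \autoref{thm.KollarTheorem3(1)}). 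Once the data is aligned, both inequalities drop out immediately, with no further computation.
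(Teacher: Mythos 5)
Your proposal is correct and follows the paper's own argument: both invoke \autoref{rem.ThereExistsANiceDivisor} to produce a $\Delta$ with $(p^e-1)(K_X+\Delta)$ Cartier (hence $\sim 0$ in the local case), $(X,\Delta)$ sharply $F$-pure, and $x$ not an $F$-pure center, and then apply \autoref{thm.KollarTheorem3(1)} with $(M,\Delta',r)=(\O_X,0,1)$ for the first inequality and $(M,\Delta',r)=(\omega_X,\Delta,p^e-1)$ for the second. Your self-corrections (switching from \autoref{lem.S2G1ImpliesFrobeniusSplitYieldsDivisor} to \autoref{rem.ThereExistsANiceDivisor}, and from $\Delta'=0$ to $\Delta'=\Delta$ in the $\omega_X$ case) land exactly on the paper's choices, so the bookkeeping worry at the end is unfounded.
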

\begin{proof}
We may assume that $X = \Spec R$ for a local ring $(R, \bm)$ with $x = V(\bm)$. Since $X$ is \STwo{} and \GOne{}, by using \autoref{rem.ThereExistsANiceDivisor}, we can assume that there exists some $\Delta \geq 0$  such that $(p^e - 1)(K_X + \Delta)$ is Cartier, such that $(X, \Delta)$ is sharply $F$-pure and such that $x$ is not an $F$-pure center of $(X, \Delta)$.   Now the second statement follows from \autoref{thm.KollarTheorem3(1)} by setting $M = \O_X(K_X)$ and setting $\Delta' = \Delta$.  The first statement also follows from \autoref{thm.KollarTheorem3(1)} by setting $M = \O_X$ and $\Delta' = 0$.
\end{proof}

\begin{question}
Suppose that $(R, \bm)$ is $F$-injective.  If $\bm$ is not an annihilator of any $F$-stable submodule of $H^i_{\bm}(R)$, does that imply any depth conditions on $R$ or $\omega_R$?
\end{question}

%Without loss of generality, we may assume that $X$ is affine.

%If $(X, \Delta)$ is strongly $F$-regular (respectively sharply $F$-pure), then by \cite[Theorem 4.3]{SchwedeSmithLogFanoVsGloballyFRegular}, we may increase $\Delta$ and assume that $(p^e-1)(K_X + \Delta)$ is Cartier while keeping the property that $(X, \Delta)$ is strongly $F$-regular (respectively sharply $F$-pure).  Note that {\it loc. cit.} is a stronger statement which also holds for non-affine varieties, but it reduces to strongly $F$-regular and sharply $F$-pure pairs in the affine case.

%Now we handle $\omega_X$ in both (a) and (b).  By restricting to a smaller affine neighborhood we may assume that $\O_X( K_X)^{ (1-p^e)} \cong \O_X( (p^e - 1)\Delta)$ and so \autoref{thm.KollarTheorem2} yields (a) and \autoref{thm.KollarTheorem3(1)} implies (b).

%For $\O_X$ in both (a) and (b), note that $0 =: \Delta' \leq \Delta$.  Then \autoref{thm.KollarTheorem2} give us (a) and \autoref{thm.KollarTheorem3(1)} gives (b).

To prove our main corollary, we need to introduce a generalization of $\Diff_{F, D} \Delta$ to the case when $D$ has higher codimension. We focus only on our case of interest, that is, when $D$ is the fiber over a smooth base.

\begin{definition}
%It is possible to completely remove the requirement that $r \Delta$ is Cartier in the statement of \autoref{cor:relative_canonical_sheaf_base_change}.  To do this, we need to explain how to define $\Delta_y$.
Let $f : X \to Y $ be a flat morphism of finite type with \STwo, \GOne{} equidimensional fibers.  Further suppose that $Y$ is a smooth variety, and $\Delta \in \QAlBDiv(X)$ is such that $\Delta$ does not contain any component of any fiber. Fix a point $y \in Y$. Working locally, we may assume that $Y = \Spec A$ and $X = \Spec R$ for local rings $(A, \bn)$ and $(R, \bm)$.  Further assume that $\bn = \langle f_1, \dots, f_n \rangle$ is regular system of generators with $Y_i = V(f_1, \dots, f_i)$ regular. Then, let $\Delta_i:= \Diff_{F,Y_i} \Delta_{i-1}$ and define then $\Diff_{F,X_y} \Delta:= \Delta_n$.

The only question is whether this construction of $\Diff_{F,X_y} \Delta$ is independent of the choice of $f_i$.  Following the method of $F$-adjunction, multiplying by each $f_i$ successively, we take a map corresponding to $\Delta$ and $\phi : F^e_* R \to R$ and obtain another map $\psi_f : F^e_* R \to R$ defined by the rule $\psi_f(F^e_* \blank) = \phi(F^e_* ( (f_1 \cdots f_n)^{p^e - 1} \cdot \blank))$.  We then restrict this map to $X_y$ by modding out by $\bn$ and so obtain $\overline{\psi}_f$.

Choosing different $Y_i$'s is simply choosing a different set of generators $\{g_1, \dots, g_n\}$ for $\bn$ which yields $\overline{\psi}_g$.  To complete the proof of the claim, it is sufficient to show that these maps differ only by multiplication by a unit.  We use $\bn^{[p^e]}$ to denote the ideal generated by the $p^e$th powers of the generators of $\bn$. Since $\bn^{[p^e]} : \bn = \langle (f_1 \cdots f_n)^{p^e - 1} \rangle + \bn^{[p^e]} = \langle (g_1 \cdots g_n)^{p^e - 1} \rangle + \bn^{[p^e]}$, see for example \cite[Proposition 2.1]{FedderFPureRat}, it follows that $(f_1 \cdots f_n)^{p^e - 1} = u (g_1 \cdots g_n)^{p^e - 1} + \sum v_i h_i^{p^e}$ for some unit $u \in A$, elements $v_i \in A$ and $h_i \in \bn$.  But now we see that $\overline{\psi}_f = (F^e_* u) \cdot \overline{\psi}_g$ since any multiple of $h_i^{p^e}$ will be sent into $\bn R$.  This completes the proof.

\end{definition}

\begin{corollary}
\label{cor:relative_canonical_sheaf_base_change}
{\rm (\cf \cite[4.3]{KollarALocalKawamataViehweg})}
Let $f : X \to Y $ be a flat morphism of finite type with \STwo, \GOne{}, equidimensional fibers to a smooth variety and let $\Delta \in \QAlBDiv(X)$ be such that it does not contain any component of any fiber.  Additionally assume that $K_X + \Delta$ is $\bQ$-Cartier, $p \notdivide \ind(K_X + \Delta)$ and $(X_y,\Diff_{F, X_y} \Delta)$ is sharply $F$-pure for every $y \in Y$.  Then $\omega_{X/Y}$ is flat over $Y$ and compatible with arbitrary base change.
\end{corollary}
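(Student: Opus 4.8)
The plan is to reduce the statement to the relative situation over a smooth curve handled in \autoref{lem:base_change} by a Noetherian induction on $\dim Y$, cutting $Y$ down one smooth hyperplane at a time. More precisely, I would argue as follows. Flatness and base-change compatibility are local on $Y$, so I may assume $Y = \Spec A$ is local (indeed, since $Y$ is smooth, I may take $A$ to be a regular local ring). I first record that $\omega_{X/Y}$ is flat over $Y$: by \autoref{lem:S_d} the fibers $X_y$ are \STwo{} (from the hypothesis that $(X_y, \Diff_{F,X_y}\Delta)$ is sharply $F$-pure, hence $X_y$ is $F$-pure, hence \SOne, together with \GOne{}, but really one wants \STwo, which follows since sharp $F$-purity forces $X_y$ to be seminormal and then the ambient \STwo-ness of $X$ plus \autoref{lem:S_d} propagates it — in fact $X$ is \STwo{} by the same lemma applied with $\sG = A$), so $\omega_{X_y}$ is the dualizing sheaf of an \STwo{} scheme and hence $\omega_{X/Y}$ is flat over $Y$ with $\omega_{X/Y}|_{X_y}\cong\omega_{X_y}$ by the standard theory (e.g. \cite[Theorem 3.6.1]{ConradGDualityAndBaseChange} applied over a regular base combined with \autoref{lem:Gorenstein_base}).

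For the base-change compatibility itself, I would induct on $n = \dim Y$. The case $n=0$ is trivial and $n=1$ is essentially \autoref{lem:base_change}. For the inductive step, write $\bn = \langle f_1, \dots, f_n\rangle$ as a regular system of parameters and set $Z = V(f_1) \subseteq Y$, a smooth Cartier divisor, and $W = X\times_Y Z = V(f_1)\subseteq X$. The key observations are: (a) the morphism $W \to Z$ again satisfies all the hypotheses of the corollary, with the divisor $\Diff_{F,W}\Delta$ in place of $\Delta$ — here one uses the compatibility of the iterated $F$-different built into the \textbf{Definition} preceding the corollary, namely $\Diff_{F,(X\times_Y Z)_z}(\Diff_{F,W}\Delta) = \Diff_{F,X_z}\Delta$ for $z \in Z$, together with $F$-adjunction (\autoref{obs.FAdjunction}) to see that sharp $F$-purity of the fibers is inherited, and \autoref{lem:restriction_equals_different} / the construction in the \textbf{Definition} to see that $K_W + \Diff_{F,W}\Delta$ is still $\bQ$-Cartier with index prime to $p$; and (b) by \autoref{lem:base_change} applied to $f : X \to Y$ with this $Z$ (whose hypotheses hold: $\Delta$ contains no component of $W$, and $(W, \Diff_{F,W}\Delta)$ is sharply $F$-pure because each fiber $(W_z, \Diff_{F,W_z}\Diff_{F,W}\Delta) = (X_z, \Diff_{F,X_z}\Delta)$ is), we get a canonical isomorphism $\omega_{X/Y}|_W \cong \omega_{W/Z}$.

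Now I would conclude as follows. Let $g : T \to Y$ be an arbitrary morphism; I want $\omega_{X/Y}$ to commute with the base change along $g$. Factor the verification: since commuting with base change can be checked after faithfully flat descent and is local, and since any morphism to a regular local ring can be approached through the successive hypersurface sections $Y \supseteq Z \supseteq Z' \supseteq \cdots$, the point is that the single cut $W = X\times_Y Z$ already carries, by the inductive hypothesis applied to $W\to Z$ (which has base of dimension $n-1$), full base-change compatibility, and the isomorphism $\omega_{X/Y}|_W \cong \omega_{W/Z}$ of step (b) together with flatness of $\omega_{X/Y}$ over $Y$ lets me transport this back: the formation of $\omega_{X/Y}$ commutes with restriction to $W$, and restriction to the $f_1$-fiber generates (as $Z$ varies over all smooth divisors through a given point, equivalently as one runs over a regular system of parameters) enough base changes to detect compatibility in general, by the usual reduction of base change to the case of a regular sequence of principal Cartier divisors on the (regular) base. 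Formally, one invokes that a coherent sheaf $\sF$ flat over a Noetherian base $Y$ commutes with all base change if and only if $\sF|_{X_{Z}}$ commutes with all base change for $Z$ a Cartier divisor cut out by a nonzerodivisor, applied inductively.

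\textbf{The main obstacle} I anticipate is the bookkeeping in step (a): verifying that the iterated $F$-different is genuinely compatible with the inductive slicing — i.e. that $\Diff_{F,W}\Delta$ restricted further along $Z' \subseteq Z$ reproduces $\Diff_{F,X_{z}}\Delta$ — and that the $\bQ$-Cartier/index-prime-to-$p$ condition survives each restriction. This is exactly what the \textbf{Definition} preceding the corollary is set up to handle, but threading it through the induction (and making sure \autoref{lem:base_change}'s hypothesis "$(W, \Diff_{F,W}\Delta)$ sharply $F$-pure" — which, as its footnote notes, upgrades $\Diff_{F,W}\Delta$ from a divisorial sheaf to an honest divisor — is available at each stage) requires care. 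The other delicate point is the purely formal statement that detecting base-change compatibility of a $Y$-flat sheaf can be reduced to restriction along one nonzerodivisor Cartier divisor at a time; this is standard (cohomology and base change, or the valuative-type criteria), but should be cited precisely rather than reproved.
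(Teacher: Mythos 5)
Your overall architecture is the same as the paper's: localize on the base, descend through a chain of smooth hypersurfaces, and apply \autoref{lem:base_change} at each step to identify $\omega_{X/Y}$ restricted to a fiber with the fiber's canonical sheaf. However, there are two genuine gaps in the way you go from that inductive identification to the actual conclusion.

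First, your flatness argument is not correct as stated. You invoke \cite[Theorem 3.6.1]{ConradGDualityAndBaseChange} ``over a regular base'' to conclude that $\omega_{X/Y}$ is flat with $\omega_{X/Y}|_{X_y}\cong\omega_{X_y}$. Conrad's theorem gives base change for Cohen--Macaulay (or Gorenstein) morphisms; the fibers here are only assumed \STwo{} and \GOne, so this does not apply, and in fact the isomorphism $\omega_{X/Y}|_{X_y}\cong\omega_{X_y}$ is precisely the non-trivial content that the iterated use of \autoref{lem:base_change} is supposed to produce, not something to be taken as input. The paper instead establishes flatness by \cite[Lemma 2.13]{BhattHoPatakfalviSchnell}: one shows $\omega_{X/Y}|_{X_y}$ is \STwo{} (in fact equal to $\omega_{X_y}$, which is \STwo{} by \cite[Corollary 5.69]{KollarMori}), and flatness follows. (Also: the hypothesis already asserts the fibers are \STwo{} and \GOne, so your detour through seminormality to re-derive \STwo-ness of the fibers is unnecessary, and $X$ being \STwo{} is immediate from \autoref{lem:S_d} as you say.)

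Second, and more seriously, the ``formal'' step you rely on at the end --- that a $Y$-flat coherent sheaf commutes with arbitrary base change if and only if its restriction to a Cartier divisor $Z\subseteq Y$ does --- is not a correct criterion. Knowing base-change compatibility along hypersurface restrictions of a regular local base does not by itself control base change along arbitrary morphisms $T\to Y$, in particular those that do not factor through any such $Z$. The paper closes this gap differently: having shown $\omega_{X/Y}$ is flat over $Y$ and relatively \STwo, it invokes \cite[Corollary 3.8]{HassettKovacsReflexivePullbacks} to see that $\omega_{X/Y}$ and all its pullbacks are reflexive, then compares $(\omega_{X/Y})_T$ with $\omega_{X_T/T}$ over the relatively Cohen--Macaulay locus (whose complement has relative codimension $\geq 2$), where base change is known (Conrad), and concludes via \cite[Proposition 3.6]{HassettKovacsReflexivePullbacks}. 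This reflexive-hull argument is the missing ingredient in your plan; without it, the induction on $\dim Y$ does not actually deliver ``arbitrary base change.'' The obstacle you flagged --- bookkeeping of the iterated $F$-different --- is genuine but comparatively mild and handled by the Definition preceding the corollary together with $F$-adjunction; the real work is the passage from the fiberwise statement to arbitrary base change.
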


\begin{proof}
We claim that $\omega_{X/Y}$ is flat over $Y$ and relatively \STwo. By \cite[Lemma 2.13]{BhattHoPatakfalviSchnell}, flatness follows as soon as we prove that the restriction of $\omega_{X/Y}$ to every fiber is \SOne. On the other hand relatively \STwo{} means the stronger condition that the above restrictions are \STwo. Therefore to show the claim, it is enough to prove that $\omega_{X/Y}|_{X_y}$ is \STwo{} for every $y \in Y$. By \cite[Corollary 5.69]{KollarMori}, $\omega_{X_y}$ is \STwo{} and hence, it is enough to show that $\omega_{X/Y} |_{X_y} \cong \omega_{X_y}$ locally around every point $x \in X_y$.  We thus replace both $X$ and $Y$ by $\Spec \O_{X,x}$ and $\Spec \O_{Y,y}$, respectively.  Therefore, we may assume that there is a sequence of smooth subvarieties: $Y=Y_0 \supseteq Y_1 \supseteq \dots \supseteq Y_{m-1} \supseteq Y_m = \{ y \} $, such that $Y_{i-1}$ is a Cartier divisor in $Y_i$. Set $X_i := X_{Y_i}$ and $\Delta_i:= \Diff_{F, X_i} \Delta_{i-1}$ with $\Delta_0 = \Delta$. Note that then $\
Delta_m = \Diff_{F,X_y} \Delta$.

%Note first that by $F$-adjunction since the fibers of $f$ are \STwo{} and \GOne, $K_{X_i} + \Delta_i$ is $\bQ$-Cartier and $p \notdivide  \ind(K_{X_i} + \Delta_i)$ for all $i$.
By applying \autoref{obs.FAdjunction} (backwards) inductively and possibly further restricting $X$ around $x$, one obtains that $(X_i, \Delta_i + X_{i-1})$ and hence $(X_i, \Delta_i )$ is sharply $F$-pure for all $i$ (in fact, this also implies all the $\bZ_{(p)}$-Weil divisorial sheaves $\Delta_i$ are honest divisors). Finally, applying \autoref{lem:base_change} inductively again yields that $\omega_{X_i/Y_i}|_{X_{i-1}} \cong \omega_{X_{i-1}/Y_{i-1}}$ for all $i$, and consequently $\omega_{X/Y} |_{X_y} \cong \omega_{X_y}$.  This
finishes the proof of our claim.

By our claim and \cite[Corollary 3.8]{HassettKovacsReflexivePullbacks} $\omega_{X/Y}$ and all its pullbacks are reflexive. Hence, by restricting to the relatively Cohen-Macaulay locus (whose complement has codimension $\geq 2$) and using \cite[Proposition 3.6]{HassettKovacsReflexivePullbacks}, for any morphism $Z \to Y$,  $\omega_{X_Z/Z} \cong (\omega_{X/Y})_Z$.
\end{proof}

\begin{remark}
By \autoref{lem:restriction_equals_different}, the appearance of $\Diff_{F,X_y} \Delta$ can be replaced by an actual ``geometric'' restriction, if we assume the following:
\begin{equation}
\label{eq:index_at_relative_codimension_one}
\parbox{350pt}{ for each $y \in Y$, there is some $r > 0$ relatively prime to $p$ such that $r\Delta$ is Cartier at the codimension 1 points of the fiber $X_y \subseteq X$.}
\end{equation}
In particular, this is satisfied if  $\Supp \Delta$ does not contain the singular codimension one points of the fibers. Indeed, let $\xi$ be a codimesnion 1 point of a fiber $X_y$. If $\xi$ is in the singular locus of $X_y$, then $\xi \not\in \Supp \Delta$ and hence $\Delta$ is Cartier at $\xi$. Otherwise, $X$ is smooth around $\xi$, and hence $K_X$ is Cartier at $\xi$. In particular then by $p \notdivide \ind (K_X + \Delta)$, we obtain that $\Delta$ is $\bZ_{(p)}$ Cartier at $\xi$. In either cases $\Delta$ satisfies \autoref{eq:index_at_relative_codimension_one}, and therefore in the special case of \autoref{cor:relative_canonical_sheaf_base_change} stated in \autoref{sec:introduction}, the use of ordinary restriction of $\Delta$ was legitimate.
\end{remark}

When $f$ is projective, the compatibility of \autoref{cor:relative_canonical_sheaf_base_change} follows for arbitrary reduced base by an important result of Koll\'ar \cite{KollarHullsAndHusks}.

\begin{corollary}
\label{cor:relative_canonical_sheaf_base_change_projective}
Let $f : X \to Y $ be a flat projective morphism with \STwo, \GOne{} equidimensional fibers.  Further suppose that $Y$ is a reduced, separated scheme of finite type over an algebraically closed field, and $\Delta$ a $\bQ$-Weil-divisor that  avoids all the codimension zero and the singular codimension one points of the fibers.  Additionally assume that there is a $p \notdivide N >0$, such that $N \Delta$ is Cartier in relative codimension one and  $\omega_{X/Y}^{[N]}(N \Delta)$ \footnote{
Let $U$ be the intersection of the relative Gorenstein locus and the locus where $N \Delta$ is Cartier. Set $\iota : U \to X$ for the natural inclusion.  The sheaf $\omega_{X/Y}^{[N]}(N \Delta)$ is the reflexive hull of $\omega_{U/Y}^N(N \Delta|_U)$, i.e. the  unique reflexive sheaf that restricts on $U$ to the above sheaf. It can be obtained as  $\iota_* (\omega_{U/Y}^N(N \Delta|_U))$. Indeed,  $\iota_* (\omega_{U/Y}^N(N \Delta|_U))$ is reflexive by \cite[Corollary 3.7]{HassettKovacsReflexivePullbacks} and it is unique by \cite[Proposition 3.6]{HassettKovacsReflexivePullbacks}.
} is a line bundle and that $(X_y, \Delta_y)$ is sharply $F$-pure for every $y \in Y$.   Then $\omega_{X/Y}$ is flat and compatible with arbitrary base change.
\end{corollary}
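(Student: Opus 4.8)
The plan is to deduce flatness and base-change compatibility of $\omega_{X/Y}$ from the $S_2$-husk machinery of \cite{KollarHullsAndHusks} together with \autoref{thm.KollarTheorem3(1)}. By \autoref{lem:relative_canonical_reflexive} the sheaf $\omega_{X/Y}$ is reflexive, and, exactly as in the last paragraph of the proof of \autoref{cor:relative_canonical_sheaf_base_change}, it suffices to show that $\omega_{X/Y}$ is flat over $Y$ with \STwo{} fibers: reflexivity of all pullbacks of $\omega_{X/Y}$, and the isomorphism $\omega_{X_Z/Z}\cong(\omega_{X/Y})_Z$ for every $Z\to Y$, then follow by restricting to the relatively Cohen--Macaulay locus and applying \cite[Corollaries 3.7, 3.8 and Proposition 3.6]{HassettKovacsReflexivePullbacks}. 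Since $\omega_{X_y}$ is \STwo{} for every $y\in Y$ by \cite[Corollary 5.69]{KollarMori}, and the natural base-change map $\omega_{X/Y}|_{X_y}\to\omega_{X_y}$ is an isomorphism over the fiberwise-big relative Gorenstein locus, this in turn amounts to showing that $\omega_{X/Y}|_{X_y}$ is \STwo{} for every $y\in Y$.

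First I would invoke Kollár's theory: the functor on reduced $Y$-schemes sending $T$ to the set of $S_2$-husks of $\omega_{X/Y}$ over $T$ is represented by a separated finite-type monomorphism $h\colon\overline{Y}\to Y$ \cite{KollarHullsAndHusks} (it is a monomorphism because the $S_2$-hull of a coherent sheaf over a field is unique). The map $h$ is surjective, since for each $y\in Y$ the sheaf $\omega_{X_y}$ is an $S_2$-husk of $\omega_{X/Y}|_{X_y}$. As $Y$ is reduced, once we know $h$ is proper it is a surjective proper monomorphism, hence a surjective closed immersion, hence an isomorphism; the universal $S_2$-husk then lives over $Y$, necessarily equals $\omega_{X/Y}$, and is by construction flat over $Y$ with \STwo{} fibers and compatible with arbitrary base change. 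So the whole statement reduces to verifying the valuative criterion for $h$.

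Thus let $R$ be a discrete valuation ring with fraction field $K$ and residue field $\kappa$, put $T=\Spec R$ with generic point $\eta$ and closed point $0$, and suppose we are given a morphism $T\to Y$ together with an $S_2$-husk over $\eta$, which by uniqueness is $\omega_{X_\eta/\eta}$. Set $X_T:=X\times_Y T$; this is flat and projective over $T$ with \STwo, \GOne{} equidimensional fibers by \autoref{lem:S_d}, and the pullback $\Delta_T$ of $\Delta$ again avoids the codimension-zero and singular codimension-one points of the fibers. The line bundle $\omega_{X/Y}^{[N]}(N\Delta)$ pulls back to a line bundle on $X_T$ which agrees with $\omega_{X_T/T}^{[N]}(N\Delta_T)$ on the fiberwise-big locus where $N\Delta_T$ is relatively Cartier; since $X_T$ is \STwo{} both are reflexive, so they coincide and $N(K_{X_T}+\Delta_T)$ is Cartier with $p\notdivide N$. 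Moreover $(X_0,\Delta_0)$ is sharply $F$-pure by hypothesis, and, because $\Delta_T$ avoids the singular codimension-one points of $X_0$ (at the smooth such points $X_T$ is regular; at the singular ones $\Delta_0=0$), $\Delta_T$ is $\bZ_{(p)}$-Cartier at the codimension-two points of $X_T$ lying on $X_0$, so $\Diff_{F,X_0}\Delta_T=\Delta_0$ by \autoref{lem:restriction_equals_different}. Running the argument of \autoref{lem:base_change} verbatim, with the Cartier divisor $W$ there replaced by $X_0\subseteq X_T$, $F$-adjunction (\autoref{obs.FAdjunction}) shows $(X_T,\Delta_T+X_0)$, and hence $(X_T,\Delta_T)$, is sharply $F$-pure near $X_0$ and no $F$-pure center of $(X_T,\Delta_T)$ is contained in $X_0$; therefore \autoref{thm.KollarTheorem3(1)} yields $\depth_x\omega_{X_T}\geq\min\{3,\codim_{X_T}x\}$ for all $x\in X_0$, and then \autoref{lem:Gorenstein_base} and \autoref{fact:S_d_hyperplane} give that $\omega_{X_T/T}|_{X_0}$ is \STwo. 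In particular $\omega_{X_T/T}$ is flat over $R$ with \STwo{} fibers, hence an $S_2$-husk over $T$ restricting to $\omega_{X_\eta/\eta}$ over $\eta$; this verifies the valuative criterion and finishes the proof.

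I expect the main obstacle to be this valuative-criterion step: one must run $F$-adjunction across the Cartier divisor $X_0\subseteq X_T$ over a trait $T$ that need not have equal characteristic with a perfect residue field — so one genuinely uses the full $F$-finite, excellent generality of the standing hypotheses — verify that the index and sharp-$F$-purity data descend to $X_T$ exactly as claimed, and correctly cite the representability, monomorphism and valuative properties of Kollár's husk space. Everything else is a bookkeeping translation of \autoref{lem:base_change}, \autoref{thm.KollarTheorem3(1)} and the Hassett--Kovács reflexive-pullback results already invoked in the proof of \autoref{cor:relative_canonical_sheaf_base_change}.
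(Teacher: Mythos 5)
Your proof is essentially correct but takes a genuinely different route from the paper's. Both arguments call on Koll\'ar's hull machinery, but they exploit it differently. The paper invokes only \cite[Corollary 24]{KollarHullsAndHusks}, which produces a \emph{locally closed decomposition} $\amalg Y_i \to Y$ over which the hull of $\omega_{X/Y}$ is flat and base-change compatible; it then shows directly that one stratum $Y_i$ must be all of $Y$ by passing to a regular alteration $S \to Y'$ of each irreducible component (de Jong) and applying the already-proven smooth-base case \autoref{cor:relative_canonical_sheaf_base_change} to $X_S \to S$, which forces $S \to Y$ to factor through some $Y_i$ and hence $Y' \subseteq Y_i$; connectedness and reducedness of $Y$ then finish it. You instead phrase the decomposition as a separated finite-type monomorphism $h\colon \overline{Y}\to Y$, note it is surjective, and check the valuative criterion to promote it to a proper (hence closed-immersion, hence by reducedness, iso-) morphism. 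That check amounts to redoing the argument of \autoref{lem:base_change} over a trait $T=\Spec R$, which was stated only for a smooth variety base. This can indeed be carried out (the local engine \autoref{thm.KollarTheorem3(1)} and the supporting lemmas work over any Gorenstein, in particular regular local, base; and for a finite-type monomorphism between Noetherian schemes over an $F$-finite field, the valuative criterion may be tested on DVRs essentially of finite type over $k$, which remain $F$-finite and excellent), but it requires you to (a) be careful about what exactly Koll\'ar's representability theorem says — it is the \emph{hull} functor, not the set of all husks, whose representability yields a monomorphism, and you should cite the right statement rather than the locally-closed-decomposition corollary — and (b) verify that sharp $F$-purity of the fiber over the closed point of $T$ descends from the hypothesis, i.e., that $(X_y, \Delta_y)$ sharply $F$-pure remains so after the residue field extension $\kappa(y)\subseteq \kappa(0)$ induced by $T\to Y$. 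The paper's route via alterations avoids re-running the depth argument and lets one quote \autoref{cor:relative_canonical_sheaf_base_change} as a black box; your route is more direct in spirit but must re-establish the inductive step over a non-variety base. Both are correct in outline, with the trade-off being where the technical burden lands.
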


\begin{proof}
First, we need some preparation about pulling back $\Delta$. Suppose $\tau : Y' \to Y$ is a  morphism and set $X' := X \times_Y Y'$, $\pi : X' \to X$, and $f' : X' \to Y'$ the induced morphisms.  Then a natural pullback $\Delta'$ of $\Delta$ can be defined as follows. Let $U \subseteq X$ be the open set where $f$ is Gorenstein and $\Delta$ is $\bQ$-Cartier.  Then, pull $\Delta|_U$ back to $\pi^{-1}U$, and finally extend it uniquely over $X'$. This extension is unique, since $\codim_{X'} X' \setminus \pi^{-1}U \geq 2$. We claim that
\begin{equation}
\label{eq:relative_canonical_sheaf_base_change_projective:pullback}
\pi^* \omega_{X/Y}^{[N]}(N \Delta) \cong \omega_{X'/Y'}^{[N]}(N \Delta').
\end{equation}
Indeed, notice that by construction $\pi^* \omega_{X/Y}^{[N]}(N \Delta)$ and $\omega_{X'/Y'}^{[N]}(N \Delta')$ agree over $\pi^{-1}U$, that is, in relative codimension one. Notice also that since $\omega_{X/Y}^{[N]}(N \Delta)$ is assumed to be a line bundle, so is $\pi^* \omega_{X/Y}^{[N]}(N \Delta)$, and therefore $\pi^* \omega_{X/Y}^{[N]}(N \Delta)$ is reflexive. On the other hand, since $\omega_{X'/Y'}^{[N]}(N \Delta')$ is defined as a pushforward of a line bundle from relative codimension one, it is reflexive by \cite[Corollary 3.7]{HassettKovacsReflexivePullbacks}. Therefore by \cite[Proposition 3.6]{HassettKovacsReflexivePullbacks},  \eqref{eq:relative_canonical_sheaf_base_change_projective:pullback} holds.  In particular, $\omega_{X'/Y'}^{[N]}(N \Delta')$ is a line bundle.

The main consequence of the previous paragraph, is that the conditions of the corollary are invariant under pullback to another reduced, separated scheme $Y'$ of finite type over $k$. That is, $X'$, $f'$ and $\Delta'$ defined above satisfy all the assumptions of the corollary. Let us turn now to the actual proof of the corollary. First, we may assume that $Y$ is connected.  Second, according to \cite[Corollary 24]{KollarHullsAndHusks} and \autoref{lem:relative_canonical_reflexive}, there is a locally closed decomposition $\amalg Y_i \to Y$, such that if $T \to Y$, then  $\omega_{X_T/T}$ is flat and commutes with base-change if and only if $T \to Y$ factors through some $Y_i \to Y$. Now, for every irreducible component $Y'$ of $Y$, there is a regular alteration $S \to Y'$ \cite[Thorem 4.1]{deJongAlterations}. By the above discussion $X_S \to S$ satisfies the assumptions of the corollary, and hence also of \autoref{cor:relative_canonical_sheaf_base_change}. Therefore, $\omega_{X_S/S}$ is flat and compatible
with
arbitrary base-change. Hence $S \to Y$ factors through one of the $Y_i$. In particular, since the image of $S \to Y$ is the component $Y'$, $Y' \subseteq Y_i$. That is, every irreducible component of $Y$ is contained in one $Y_i$. However, $Y$ is connected, therefore  all irreducible components of $Y$ are contained in the same $Y_i$, and hence by the reducedness of $Y$, $Y_i=Y$.
%Since for any two points of $Y$, there is a sequence of regular curves (curves here do not have to be subschemes of $Y$, only mapping to $Y$) connecting them, according to \autoref{cor:relative_canonical_sheaf_base_change}, every two point should be contained in the same $Y_i$. Therefore, there is only one $Y_i$, which is then equal to $Y$ since $Y$ is reduced.
\end{proof}

% The next corollary holds modulo the following major conjecture.
%
% \begin{conjecture}
% Given a
% \end{conjecture}

% \begin{corollary}
% \label{cor:characteristic:zero}
% Characteristic zero
% \end{corollary}

%\todo{ {\bf Zsolt:} The characteristic zero compatibility might follow up to the famous conjecture, just because the failure of compatibility is on a closed set of the base. I'll try to phrase that in \autoref{cor:characteristic:zero}}

\begin{remark}
In the case of $\dim Y=1$, if instead of assuming that $X_y$ is sharply $F$-pure, one assumes that $(X,X_y)$ is $F$-pure for all $y \in Y$, the $p \notdivide \ind(K_X + \Delta)$ assumption can be dropped from the above corollaries using the trick of \autoref{lem.S2G1ImpliesFrobeniusSplitYieldsDivisor}.
\end{remark}

\begin{question}
\label{qtn:aribtrary_base}
Does the compatibility of the relative canonical sheaf with base change stated in \autoref{cor:relative_canonical_sheaf_base_change} hold for singular $Y$ (with the adequate modification in the setup as in \autoref{cor:relative_canonical_sheaf_base_change_projective})?  From the modular point of view, especially interesting would be the case of non-reduced $Y$. This case  is open even in the projective case.
\end{question}

\begin{remark}
It should be noted that the characteristic zero analogue of \autoref{cor:relative_canonical_sheaf_base_change} is known if $f$ is projective and $Y$ is arbitrary \cite[Theorem 7.9]{KollarKovacsLCImpliesDB}. That is, the answer to the characteristic zero analogue of \autoref{qtn:aribtrary_base} is positive when $f$ is projective.
\end{remark}

\begin{question}
\label{qtn:log_canonical}
Can one replace sharply $F$-pure by log-canonical (still assuming positive characteristic) in the statement of \autoref{cor:relative_canonical_sheaf_base_change}? This would also be important from the modular point of view, since sharply $F$-pure varieties can be  deformed to log-canonical but not sharply $F$-pure varieties.
\end{question}

\begin{question}
\label{qtn:divisibility}
Can one remove the divisibility by $p$ condition from the statement of \autoref{cor:relative_canonical_sheaf_base_change}?
\end{question}

\begin{remark}
\cite[4.10]{KollarALocalKawamataViehweg} The sheaf $\O_X(-D)$ in \autoref{thm.KollarTheorem2} and \autoref{thm.KollarTheorem3(1)} cannot be replaced by $\O_X(D)$ as shown in \cite{KollarALocalKawamataViehweg}. We refer to \cite{KollarALocalKawamataViehweg} for the actual example.
\end{remark}

%**********************************
%***  I manually inserted the bbl file because I don't know how to
%***  force certain behavior in the flips and abundance reference manually
%**********************************

%\bibliographystyle{skalpha}
%\bibliography{MainBib}

\def\cfudot#1{\ifmmode\setbox7\hbox{$\accent"5E#1$}\else
  \setbox7\hbox{\accent"5E#1}\penalty 10000\relax\fi\raise 1\ht7
  \hbox{\raise.1ex\hbox to 1\wd7{\hss.\hss}}\penalty 10000 \hskip-1\wd7\penalty
  10000\box7}
\providecommand{\bysame}{\leavevmode\hbox to3em{\hrulefill}\thinspace}
\providecommand{\MR}{\relax\ifhmode\unskip\space\fi MR}
% \MRhref is called by the amsart/book/proc definition of \MR.
\providecommand{\MRhref}[2]{%
  \href{http://www.ams.org/mathscinet-getitem?mr=#1}{#2}
}
\providecommand{\href}[2]{#2}

\end{document}